\documentclass[11pt]{amsart}

\usepackage{amsmath, amssymb, amscd, cancel, color, graphicx, pinlabel, soul, stmaryrd}

\usepackage{thmtools}
\usepackage{thm-restate}
\usepackage{mathdots}

\headheight=7pt         \topmargin=14pt
\textheight=574pt       \textwidth=445pt
\oddsidemargin=18pt     \evensidemargin=18pt

\usepackage[all]{xy}
\usepackage{multirow}
\usepackage{longtable}
\usepackage{array}
\usepackage{paralist}

\setlength{\parskip}{4pt}

\newtheorem{thm}{Theorem}[section]

\newtheorem{cor}[thm]{Corollary}
\newtheorem{lem}[thm]{Lemma}
\newtheorem{prop}[thm]{Proposition}
\newtheorem{ques}[thm]{Question}

\clubpenalty=3000
\widowpenalty=3000

\def\cB{{\mathcal B}}

\def\I{{\mathcal I}}

\def\bR{{\mathbb R}}

\def\bZ{{\mathbb Z}}

\def\Hom{{\mathrm{Hom}}}

\def\Spc{{\mathrm{Spin^c}}}

\def\del{{\partial}}

\def\rk{{\mathrm{rk}}}

\begin{document}

\title[Fibered simple knots]%
{Fibered simple knots}

\author[Joshua Evan Greene]{Joshua Evan Greene}

\address{Department of Mathematics, Boston College\\ Chestnut Hill, MA 02467}

\email{joshua.greene@bc.edu}

\author[John Luecke]{John Luecke}

\address{Department of Mathematics, University of Texas, Austin\\ Austin, TX 78712}

\email{luecke@math.utexas.edu}

\thanks{JEG was supported on NSF Award DMS-2005619.}

\maketitle

\medskip

\noindent {\bf Abstract.}
We prove that a simple knot in the lens space $L(p,q)$ fibers if and only if its order in homology does not divide any remainder occurring in the Euclidean algorithm applied to the pair $(p,q)$.
One corollary is that if $p=m^2$ is a perfect square, then any simple knot of order $m$ fibers, answering a question of Cebanu.
More generally, we compute the leading coefficient of the Alexander polynomial of a simple knot, and we describe how to construct a minimum complexity Seifert surface for one.
The methods are direct, combinatorial, and geometric.
\medskip


\section{Introduction.}

\subsection{Background.}

Simple knots first arose in Berge's work on lens space surgeries \cite{berge:lens}, and they play an important role in Heegaard Floer homology and Dehn surgery.
Recall that a 3-dimensional lens space is a manifold besides $S^3$ and $S^1 \times S^2$ with a genus-1 Heegaard decomposition.
A pair of compressing disks, one in each Heegaard solid torus, is standard if their boundaries are in minimal position on the Heegaard torus.
A {\bf simple knot} is a knot in a lens space built from the union of a pair of properly embedded arcs, one in each of a pair of standard compressing disks.

Heegaard Floer homology assigns invariants $\widehat{\mathrm{HF}}(Y)$ and $\widehat{\mathrm{HFK}}(K)$ to a closed 3-manifold $Y$ and a knot $K \subset Y$.
When $Y$ is a rational homology sphere, they obey the inequalities
\[
\rk \, \widehat{\mathrm{HFK}}(K) \ge \rk \, \widehat{\mathrm{HF}}(Y) \ge |H_1(Y;\bZ)|.
\]
The knot $K$ is {\bf Floer minimal} if the first inequality is an equality, and the manifold $Y$ is an {\bf $L$-space} if the second inequality is an equality.
Simple knots and lens spaces are prominent examples of Floer minimal knots and $L$-spaces, respectively.

The Berge conjecture posits that a knot in a lens space with an integral surgery to $S^3$ is a simple knot \cite{berge:lens}.
Hedden and Rasmussen independently proved that a knot in an $L$-space $Y$ with an integral surgery to $S^3$ is Floer minimal, subject to a mild assumption on the knot genus which is always satisfied when $Y$ is a lens space \cite{greene:cabling,hedden:berge,r:Lspace}.
There is an analogous conjecture and unconditional result with $S^1 \times S^2$ in place of $S^3$ \cite{greene:2013-4,nivafaee}.
Thus, the Berge conjecture and its $S^1 \times S^2$ version would follow from the conjecture that Floer minimal knots in lens spaces are simple \cite{bgh:lens,r:Lspace}.
It is known which simple knots in lens spaces admit $S^3$ surgeries, while it is an open problem to determine which ones admit $S^1 \times S^2$ surgeries \cite{bbl:2016,greene:2013-4}.

Work by several researchers shows that if a knot $K$ in an $L$-space has an $S^3$ or $S^1 \times S^2$ surgery, then it is {\bf fibered}, meaning that its exterior fibers over the circle.\footnote{Some authors use the term {\em rationally fibered}.}
There are several noteworthy results in this vein.
A homology argument shows that if $K \subset Y$ has an integer surgery to $S^3$, then its homology class is primitive, while if it has an integer surgery to $S^1 \times S^2$, then its order in homology is the square-root of $|H_1(Y;\bZ)|$.
Ozsv\'ath and Szab\'o proved that any simple knot whose homology class is primitive is fibered \cite{os:lens}.
Using a computer, Cebanu observed that for all $\theta < 500$, any simple knot whose homology class has order $\theta$ in a lens space of order $\theta^2$ fibers, constituting billions of examples.
He raised the question as to whether this is always the case, without any bound on $\theta$  \cite[Question 4.0.3]{cebanuthesis}.
Ni and Wu proved that whether a Floer minimal knot in an $L$-space fibers depends only on its homology class \cite[Corollary 5.3]{niwu2014}.
Since simple knots are Floer minimal and every homology class in a lens space contains one, it follows that the determination of which Floer minimal knots in lens spaces fiber reduces to the corresponding question for simple knots.


\subsection{The main result.}
\label{ss: main result}
The preceding discussion leads us to our main result, a complete characterization of when a simple knot fibers.
To state it, we require a little more terminology.
For a pair of relatively prime positive integers $p > q$,
write out the steps of the Euclidean algorithm applied to the pair $(p,q)$:

\begin{eqnarray*}
p &=& d_1 q + r_1 \\
q &=& d_2 r_1 + r_2 \\
r_1 &=& d_3 r_2 + r_3 \\
&\vdots& \\
r_{n-2} &=& d_n r_{n-1} + 0.
\end{eqnarray*} 

\noindent
The {\bf remainders} are the values $r_1 > \cdots > r_{n-1}=1$. 
They can also be defined recursively by taking $r_{-1} = p$, $r_0 = q$, and $r_{i+1} = [r_{i-1}]_{r_i}$ for $i \ge 0$, $r_i > 0$, where $[a]_p$ denotes the least positive residue of $a \pmod p$.
The {\bf coefficients} are the values $d_1, \dots, d_n$. 
They are so named in connection with continued fractions: one writes $p/q = [d_1,\dots,d_n]$.
A {\bf harmonic} of the pair $(p,q)$ is a common divisor of $p$ and one of the remainders. 
Lastly, the {\bf order} of an (oriented) knot is the order of its homology class.

\begin{thm}
\label{thm: fiber}
A simple knot in $L(p,q)$ fibers iff its order is not a harmonic of the pair $(p,q)$.
\end{thm}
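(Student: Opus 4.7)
The plan is to reduce fibering to monicity of the Alexander polynomial of a simple knot, compute that polynomial combinatorially from the genus-$1$ Heegaard diagram of $L(p,q)$, and show that monicity translates to the harmonic condition via an induction along the Euclidean algorithm.

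For the setup, I would model $L(p,q)$ by its standard Heegaard diagram: a torus $T$ with meridian curves $\alpha, \beta$ of the two solid tori, meeting transversely in $p$ points $x_0, \dots, x_{p-1}$ indexed cyclically along $\alpha$. A simple knot $K$ is encoded by choosing two of these points, say $x_0 = w$ and $x_j = z$, as the endpoints of the defining arcs; its homology class and order $m$ are determined by $j$. All $p$ intersection points serve as generators of $\widehat{\mathrm{CFK}}(K)$ and contribute with the same sign, because $\alpha$ and $\beta$ are simple closed curves on an oriented torus. Consequently $\Delta_K(t) = \sum_k t^{A(x_k)}$ has positive coefficients, and its leading coefficient equals the number of generators achieving the top Alexander grading. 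A fibered knot has monic Alexander polynomial, and for simple knots the converse holds by Ni's fibering theorem (they are Floer minimal), so the task is to determine when the maximum of $A$ is uniquely achieved.

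The grading differences $A(x_k) - A(x_l)$ are computable as signed counts of the basepoints $w, z$ along subarcs of $\alpha$ and $\beta$ joining $x_k$ to $x_l$, yielding an explicit function $A$ on $\bZ/p$. The crux is to analyze when this function has a unique maximum and to match the answer with ``order of $K$ not a harmonic.'' I would induct on the depth $n$ of the Euclidean algorithm. The inductive step rests on a reduction of the $(\alpha,\beta)$-diagram for $L(p,q)$ to that for $L(q, r_1)$: a change of curve on $T$ (a Dehn twist of slope $d_1$) presents the $L(p,q)$ data as $d_1$ parallel strips stacked on the $L(q, r_1)$ data, and the function $A$ descends in a controlled way. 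Uniqueness of the top grading survives the reduction unless the order $m$ divides $r_1$; iterating yields uniqueness iff $m$ divides no $r_i$ --- precisely the harmonic condition.

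For the affirmative direction, when the order is not a harmonic, I would construct an explicit fibration by building a minimum-complexity Seifert surface inductively along the continued fraction $p/q = [d_1, \dots, d_n]$, starting from the base case where all simple knots in $L(p,1)$ of nontrivial order fiber. The hardest part is the Euclidean induction itself: identifying the precise correspondence between Alexander grading maxima in $L(p,q)$ and $L(q, r_1)$, tracking how the order $m$ transforms under the reduction, and matching the resulting combinatorics to divisibility by the remainders.
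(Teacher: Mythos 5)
Your overall skeleton---reduce fibering to a uniqueness-of-extremum criterion for the Alexander-type grading on the $p$ intersection points of the genus-$1$ diagram, then analyze when the extremum is unique by inducting along the Euclidean algorithm---is the same as the paper's, and the fibering reduction you cite (via Floer-minimality and Ni's criterion, or equivalently Brown--Stallings) is sound. The gap is in the middle: the inductive step you propose does not actually work, and you leave precisely that step as ``the hardest part.''

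Concretely, you propose to realize the $L(p,q)$ Heegaard diagram as $d_1$ parallel strips over the $L(q,r_1)$ diagram and have the grading function ``descend in a controlled way,'' iterating to conclude uniqueness holds iff $m$ divides no remainder. But the order $m$ of $K(p,q,k)$ in $H_1(L(p,q)) \cong \bZ/p$ has no natural counterpart in $H_1(L(q,r_1)) \cong \bZ/q$; indeed $m$ need not divide $q$ at all (e.g.\ $m=p$ when $\gcd(p,k)=1$). So there is no knot $K(q,r_1,k')$ whose order is ``the image of $m$'' for which the inductive hypothesis could be invoked, and your target equivalence with divisibility by the remainders of $(q,r_1)$ cannot be stated, let alone propagated. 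The paper sidesteps this entirely: it never reduces to a smaller lens space. Instead it fixes the $p$-periodic jump function $g$ (the grading-difference sequence) once and for all, and recursively localizes where $\min(g)$ can occur, passing to nested subintervals of $[0,p)$ of widths $r_0 > r_1 > r_2 > \cdots$; the order $\theta$ is held fixed throughout. When $\theta\nmid r_i$ the minima are pinned into a single subinterval of width $r_{i+1}$ (Lemmas~\ref{lem: minima} and~\ref{lem: minima 2-2}); when $\theta\mid r_i$ a separate geometric argument (Proposition~\ref{prop:lotsofzeros}, Theorem~\ref{thm:lotsofzerosg}) shows the minima occur in arithmetic progressions of length $\lceil r_{i-1}/r_i\rceil$ with common difference $r_i$. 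Both pieces are needed, and neither follows from the strip-stacking picture as stated. So your proposal has identified the right invariant and the right answer, but the mechanism by which the Euclidean algorithm enters is different from, and I believe must be different from, the one you describe.
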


\begin{cor}
\label{cor: floer minimal}
A Floer minimal knot in $L(p,q)$ fibers iff its order is not a harmonic of the pair $(p,q)$.
\end{cor}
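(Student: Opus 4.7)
The plan is to deduce Corollary \ref{cor: floer minimal} from Theorem \ref{thm: fiber} by reducing the general Floer minimal case to the simple knot case via two facts recalled in the introduction: (i) every homology class in a lens space is represented by a simple knot, and (ii) whether a Floer minimal knot in an $L$-space fibers depends only on its homology class (Ni--Wu, \cite[Corollary 5.3]{niwu2014}). Simple knots are themselves Floer minimal, so (ii) can be applied to compare a given Floer minimal knot with a simple model in its class.

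More precisely, let $K \subset L(p,q)$ be Floer minimal, and let $\theta$ denote its order in $H_1(L(p,q);\bZ)$. By (i), choose a simple knot $K' \subset L(p,q)$ with $[K'] = [K]$; in particular $K'$ also has order $\theta$. Since $L(p,q)$ is an $L$-space and both $K$ and $K'$ are Floer minimal, Ni--Wu's result implies that $K$ fibers iff $K'$ fibers. Now Theorem \ref{thm: fiber} says $K'$ fibers iff $\theta$ is not a harmonic of the pair $(p,q)$. Chaining these equivalences gives the corollary.

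Since all the hard work is in Theorem \ref{thm: fiber} and in the cited input of Ni--Wu, there is essentially no obstacle here beyond a careful bookkeeping check: one must verify that the hypothesis ``Floer minimal knot in an $L$-space'' applies equally to both $K$ and $K'$ (immediate, as lens spaces are $L$-spaces and simple knots are Floer minimal), and that the notion of ``fibered'' used in Theorem \ref{thm: fiber} agrees with the one to which Ni--Wu's theorem applies (both refer to the exterior fibering over $S^1$, i.e.\ rational fiberedness). No additional geometric input about $K$ itself is needed, since $K$ is accessed only through its homology class.
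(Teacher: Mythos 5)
Your proposal is exactly the argument the paper intends: it cites Theorem \ref{thm: fiber} together with Ni--Wu's \cite[Corollary 5.3]{niwu2014} and calls the deduction immediate, relying implicitly on the facts you make explicit (simple knots are Floer minimal and realize every nontrivial homology class in a lens space). Your write-up just unpacks the bookkeeping; no difference in substance.
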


\begin{proof}
Immediate from Theorem \ref{thm: fiber} and
\cite[Corollary 5.3]{niwu2014}.
\end{proof}

\begin{cor}
\label{cor: fiber heredity}
If $K_1$ and $K_2$ are Floer minimal knots in a lens space,
$K_1$ is homologous to a multiple of $K_2$,
and $K_1$ fibers, then $K_2$ fibers.
\end{cor}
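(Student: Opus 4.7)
The plan is to reduce this to the arithmetic fact that the set of harmonics of $(p,q)$ is closed under taking divisors, and then combine that with Corollary \ref{cor: floer minimal} and a straightforward divisibility observation about homology classes in $H_1(L(p,q); \bZ) \cong \bZ/p\bZ$.

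First, I would write $\theta_i$ for the order of $K_i$ in homology ($i = 1, 2$). By Corollary \ref{cor: floer minimal}, the hypothesis that $K_1$ fibers translates to: $\theta_1$ is not a harmonic of $(p,q)$, and proving that $K_2$ fibers is equivalent to showing that $\theta_2$ is also not a harmonic of $(p,q)$. So the problem becomes a purely number-theoretic statement about the integers $\theta_1$ and $\theta_2$.

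Next, I would extract the key divisibility relation: since $[K_1]$ is a multiple of $[K_2]$, say $[K_1] = k\,[K_2]$ in $\bZ/p\bZ$, we have $\theta_2 \cdot [K_1] = k\,(\theta_2\,[K_2]) = 0$, and hence $\theta_1 \mid \theta_2$.

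Finally, I would observe that the collection of harmonics of $(p,q)$ is closed under taking divisors: a harmonic is a common divisor of $p$ and some remainder $r_i$, and any divisor of such a number is itself a common divisor of $p$ and $r_i$. Combining this closure property with $\theta_1 \mid \theta_2$, if $\theta_2$ were a harmonic then $\theta_1$ would be a harmonic too, contradicting the fibering of $K_1$. Hence $\theta_2$ is not a harmonic, and $K_2$ fibers by Corollary \ref{cor: floer minimal}. There is no real obstacle here; the entire content of the corollary is that Theorem \ref{thm: fiber} characterizes fiberedness by a divisor-closed subset of $\bZ_{>0}$, so fiberedness is inherited by Floer minimal knots whose homology class divides that of a fibered one.
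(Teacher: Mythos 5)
Your proof is correct and matches the paper's: both derive the result by noting that the order of $[K_1]$ divides that of $[K_2]$, that harmonics of $(p,q)$ are closed under taking divisors, and then invoking the fibering criterion of Corollary \ref{cor: floer minimal}. (The paper's printed proof has a self-referential citation typo, citing Corollary \ref{cor: fiber heredity} where Corollary \ref{cor: floer minimal} is surely intended, but the underlying argument is exactly yours.)
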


\begin{proof}
Immediate from Theorem \ref{thm: fiber} and Corollary \ref{cor: fiber heredity}, since the order of $[K_1]$ divides that of $[K_2]$.
\end{proof}

\noindent
Corollary \ref{cor: fiber heredity} therefore refines \cite[Corollary 5.3]{niwu2014} in the case that $Y$ is a lens space.
We wonder whether Corollary \ref{cor: fiber heredity} holds for a Floer minimal knot in an arbitrary $L$-space.

\begin{restatable}{cor}{orderbound}
\label{c: order bound}
A simple knot of order $\theta$ in a lens space of order $p$ fibers if $(\theta+1)^2 > p+1$.
In particular, a simple knot of order $\theta$ in a lens space of order $\theta^2$ fibers.
\end{restatable}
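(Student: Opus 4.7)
The plan is to invoke Theorem \ref{thm: fiber} and argue contrapositively: if $\theta$ is a harmonic of $(p,q)$ with $\theta \ge 2$, then $p+1 \ge (\theta+1)^2$. Given this, the hypothesis $(\theta+1)^2 > p+1$ precludes $\theta$ from being a harmonic, so the simple knot of order $\theta$ fibers. The special case $p = \theta^2$ follows at once because $(\theta+1)^2 = \theta^2 + 2\theta + 1 > p + 1$.

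To produce the lower bound on $p$, I will first establish by induction on $j$ the convergent expansion
\[
p = A_j\, r_j + A_{j-1}\, r_{j+1},
\]
where $A_{-1} = 1$, $A_0 = d_1$, and $A_k = d_{k+1} A_{k-1} + A_{k-2}$ for $k \ge 1$. The base case $j = 0$ is the first Euclidean step, while the induction step uses $r_j = d_{j+2}\, r_{j+1} + r_{j+2}$. Since $\gcd(r_j, r_{j+1}) = \gcd(p,q) = 1$, the assumptions $\theta \mid r_j$ and $\theta > 1$ force $\gcd(\theta, r_{j+1}) = 1$; in particular $r_{j+1} \ne 0$, so $j \le n-2$ and $r_{j+1} \ge 1$. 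Reducing the identity modulo $\theta$ and using $\theta \mid p$, $\theta \mid r_j$, I obtain $\theta \mid A_{j-1}\, r_{j+1}$, hence $\theta \mid A_{j-1}$ and thus $A_{j-1} \ge \theta$.

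The same recursion together with positivity of the $A_k$ yields $A_j = d_{j+1} A_{j-1} + A_{j-2} \ge A_{j-1} + 1 \ge \theta + 1$ (a short check handles $j=1$ using $A_{-1}=1$). Combined with $r_j \ge \theta$ and $r_{j+1} \ge 1$, this gives
\[
p \ge (\theta+1)\,\theta + \theta \cdot 1 = \theta^2 + 2\theta,
\]
which is exactly $p+1 \ge (\theta+1)^2$, as desired. I do not foresee a real obstacle here: the argument is just a marriage of the standard continued-fraction convergent identity with the coprimality built into the Euclidean algorithm. The only places needing care are the base case of the $A_k$ recursion and the edge possibility $r_{j+1} = 0$, both dispatched at once by the coprimality observation.
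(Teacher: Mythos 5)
Your proof is correct and takes essentially the same route as the paper's: both argue contrapositively from Theorem~\ref{thm: fiber}, invoke the convergent identity of Corollary~\ref{cor: identities}(4) (your $A_k$ is the paper's $p_{k+1}$), deduce $\theta \mid p_j$ for the index $j$ with $\theta \mid r_j$, and then extract $p \ge \theta(\theta+2)$ from positivity and strict monotonicity of the remainders/convergents. The only differences are cosmetic: the paper derives $\theta \mid p_j$ from the separate identity $(-1)^j r_j = p_j q - p q_j$ of Corollary~\ref{cor: identities}(3) and then applies the instance $p = r_j p_{j-1} + p_j r_{j-1}$ for the numerical bound, whereas you read both the divisibility (via $\gcd(\theta,r_{j+1})=1$) and the bound off the single shifted instance $p = p_{j+1} r_j + p_j r_{j+1}$, making your version marginally more self-contained.
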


\noindent
We give the short derivation of this result from Theorem \ref{thm: fiber} in Section \ref{s: fibering} after developing a little more notation.
The second statement in Corollary \ref{c: order bound} affirmatively answers Cebanu's question.
The bound in Corollary \ref{c: order bound} is sharp, in the sense that the simple knots of order $\theta$ in $L(\theta(\theta+2),\theta+1)$ do not fiber, as the reader may check using Theorem \ref{thm: fiber}.
We wonder whether Corollary \ref{c: order bound} generalizes to a Floer minimal knot in an arbitrary $L$-space of order $p$.


\subsection{Essential notation.}
\label{ss: notation}
We deduce Theorem \ref{thm: fiber} on a combination of two other results, Theorems \ref{thm: characterization} and \ref{thm: minimizer}.
We prepare some notation for their statements.

Fix a pair of relatively prime positive integers $p > q$.
Take a genus-1 Heegaard decomposition of the lens space $L(p,q) = V_\alpha \cup V_\beta$, $V_\alpha, V_\beta \approx S^1 \times D^2$, and choose meridian disks $D_\alpha \subset V_\alpha$, $D_\beta \subset V_\beta$ so that $\alpha = \del D_\alpha$ and $\beta = \del D_\beta$ meet transversely in $p$ points of intersection.
Orient $\alpha$ and let $x_0,\dots,x_{p-1}$ denote the cyclic order of its points of intersection with $\beta$.
For $0 < k < p$, we obtain the (oriented) simple knot $K(p,q,k) \subset L(p,q)$ by taking the union of properly embedded, oriented arcs $\delta \subset D_\alpha$, $\epsilon \subset D_\beta$ with $\del \delta = - \del \epsilon = x_0 - x_k$.
Observe that there is an ambient isotopy of $L(p,q)$ that sends each $x_i$ to $x_{i+1}$, so $K(p,q,k)$ does not depend on the choice of $x_0$.
Furthermore, $[K(p,q,k)] = k [C_\beta] \in H_1(L(p,q);\bZ)$, where $C_\beta$ denotes the core of the Heegaard solid torus $V_\beta$, oriented so that $\beta$ links negatively with it.
Hence there is a unique simple knot representing each non-trivial homology class.

\begin{figure}
\includegraphics[width=6in]{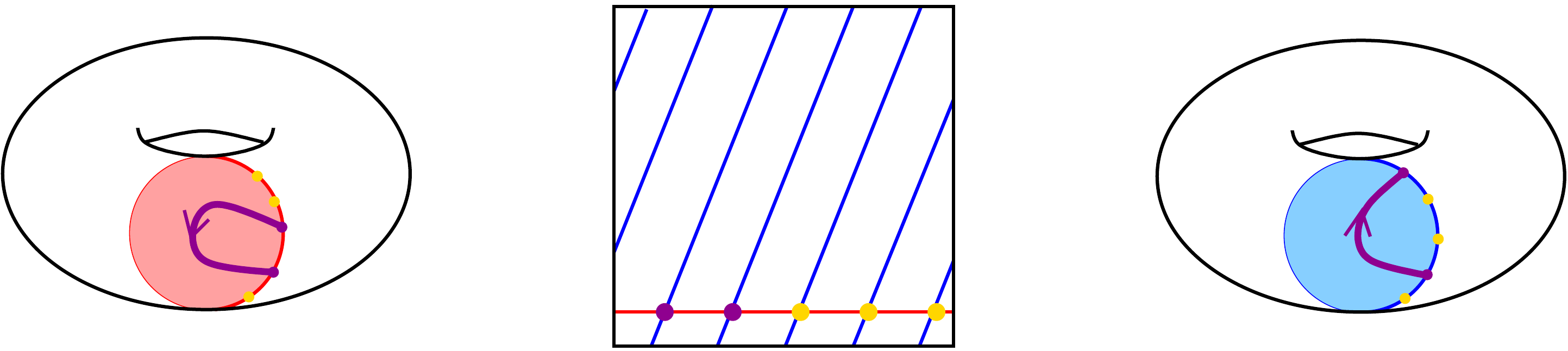}
\put(-390,30){$\delta$}
\put(-70,30){$\epsilon$}
\put(-430,85){$V_\alpha$}
\put(-290,85){$T$}
\put(-110,85){$V_\beta$}
\put(-355,21){$x_0$}
\put(-352,32){$x_1$}
\put(-260,17){$x_0$}
\put(-242,17){$x_1$}
\put(-35,21){$x_0$}
\put(-40,49){$x_1$}
\caption{The simple knot $K(5,2,1) = \delta \cup \epsilon$.}
\label{fig: simple}
\end{figure}

Given relatively prime positive integers $p > q$ and a positive integer $k < p$, we define a sequence $S(p,q,k)$ as follows.
Let $q'$ denote the smallest positive integer such that $q' q \equiv 1 \pmod p$.
For an integer $a$, let $[a]_p$ denote the least positive residue of $a \pmod p$.
Define
\begin{equation}
\label{eq: l theta t}
l = [q' k]_p, \quad \theta = p / \gcd(p,k), \quad \textup{and} \quad t = l / \gcd(p,l) = l / \gcd(p,k) = l \theta / p.
\end{equation}
Note that $\theta$ is the order of the simple knot $K(p,q,k)$, $0 < t < \theta$, and $\gcd(t,\theta) = 1$.
Let $S \subset \{1,\dots,p-1\}$ denote the set of values
\[
\{ [q]_p, [2q]_p,\dots, [lq]_p \}.
\]
Note that $[lq]_p = k$.
Define a function $g : \{0,\dots,p-1\} \to \bZ$ uniquely by the requirement that
\begin{equation}
\label{eq: jump function}
g(x) - g(x-1) = \begin{cases} t, & x \notin S, \\ t - \theta, & x \in S, \end{cases}
\end{equation}
for all $0 < x < p$ and $\min(g)=0$.
Extend $g$ uniquely to a $p$-periodic function on the integers by the same name.
Finally, define $S(p,q,k)$ to be the sequence of non-negative integers
$g(x)$, $x =0,\dots,p-1$.


\subsection{Secondary results.}

The sequence $S(p,q,k)$ arises in various guises, such as in connection with the Alexander polynomial $\Delta(p,q,k)$ of the knot $K(p,q,k)$:

\begin{prop}
\label{prop: coeffs}
The generating function of the sequence $S(p,q,k)$ equals $\Delta(p,q,k)$, multiplied by a monic polynomial.
In particular, the leading coefficient of $\Delta(p,q,k)$ equals the number of times the sequence $S(p,q,k)$ attains its minimum value.
\end{prop}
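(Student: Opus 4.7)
The plan is to interpret $S(p,q,k)$ as the multiset of (rescaled) Alexander gradings on the $p$ generators of $\widehat{\mathrm{CFK}}(L(p,q),K(p,q,k))$ computed from the standard genus-one Heegaard diagram of $L(p,q)$, and then to invoke the relation between the graded Euler characteristic of $\widehat{\mathrm{HFK}}$ and the Alexander polynomial for rationally null-homologous knots. To this end I would promote $(T,\alpha,\beta)$ to a doubly pointed Heegaard diagram by pushing $K = \delta \cup \epsilon$ onto $T$ and placing basepoints $w,z$ in the two regions of $T \setminus (\alpha \cup \beta)$ separated by the pushed-off $\epsilon$-arc. The $p$ intersection points $x_i$ are the generators, and the differential vanishes because distinct $x_i$ represent distinct $\Spc$-structures on the knot exterior --- the defining feature of simple knots.

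Next, for each $0 < i < p$, I would compute the relative Alexander grading $A(x_i) - A(x_{i-1}) = n_z(D_i) - n_w(D_i)$ using a bigon $D_i$ on $T$ bounded by the arc of $\alpha$ from $x_{i-1}$ to $x_i$ together with an arc of $\beta$ joining the same pair. The key combinatorial input is that the cyclic order along $\beta$ of its intersections with $\alpha$ is $x_0, x_q, x_{2q}, \dots \pmod{p}$, so the $\beta$-boundary of $D_i$ crosses $\epsilon$ precisely when $i \in S = \{[q]_p, \dots, [lq]_p\}$. After rescaling the Alexander grading by $\theta$ --- the order of $[K]$, which clears the denominators introduced by the rational longitude of $K$ --- the jump becomes $t$ outside $S$ and $t - \theta$ inside $S$, which is exactly \eqref{eq: jump function}. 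Hence $A(x_i)$ agrees with $g(i)$ up to an overall shift.

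Finally, all intersections of $\alpha$ and $\beta$ on $T$ share the same local sign, so every $x_i$ has the same Maslov grading modulo $2$ and no cancellation occurs in the graded Euler characteristic:
\[
\chi(\widehat{\mathrm{HFK}}(K(p,q,k)),T) = \sum_{i=0}^{p-1} T^{A(x_i)} = T^c \sum_{x=0}^{p-1} T^{g(x)}.
\]
By the Ozsv\'ath--Szab\'o identification of this Euler characteristic with $\Delta(p,q,k)$ up to a monic-polynomial factor (the Reidemeister torsion contribution of $L(p,q)$), the generating function of $S(p,q,k)$ equals $\Delta(p,q,k)$ multiplied by a monic polynomial. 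The leading-coefficient claim then follows from the palindromic symmetry of $\Delta$ together with the observation that the constant term of the generating function equals $\#\{x : g(x) = 0\}$. The main obstacle is the middle step --- identifying the cyclic $\beta$-order of intersections with multiplication by $q \pmod{p}$, pinpointing where $\beta$ crosses $\epsilon$, and calibrating the Alexander grading to yield integer jumps of precisely $t$ and $t - \theta$ --- after which the remainder is routine bookkeeping.
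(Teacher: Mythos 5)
Your route is genuinely different from the paper's and is essentially the Floer-theoretic argument the authors say they ``were briefly tempted to deploy'' (see the \emph{Floer homology} paragraph in the Complements subsection) before settling on a direct approach. The paper never mentions $\widehat{\mathrm{HFK}}$ in the proof of Proposition~\ref{prop: coeffs}: starting from the same doubly pointed diagram, it reads off a presentation $\pi_1(X_K)\cong\langle x,y \mid r\rangle$ with $r=\prod_{i=0}^{p-1}xy^{e_i}$, applies the Fox free derivative $\partial_x r=\sum_j \prod_{i<j}xy^{e_i}$, evaluates under the abelianization $\phi$ (with $\phi(x)=t$, $\phi(y)=-\theta$), observes that the partial products track exactly the jump function $g$, and closes by quoting the classical identity $\phi(\partial_x r)\sim \Delta(p,q,k)\cdot(T^{\phi(y)}-1)/(T-1)$. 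The monic factor $(T^{\theta}-1)/(T-1)$ thus comes out explicitly and for free. Your approach buys a clean conceptual link to knot Floer homology, which the authors themselves discuss in the Complements subsection; what it costs is the reliance on the graded-Euler-characteristic-equals-Alexander-polynomial statement in the \emph{rationally} null-homologous setting, whose precise normalization (and hence the monic factor) is exactly the thing that must be pinned down -- it is not a black-box corollary of the $S^3$ or null-homologous case. You have also misidentified where the difficulty lies: the combinatorics of the $\beta$-ordering and the jump calibration, which you flag as the main obstacle, is the routine part and matches the paper's computation line for line; the real work in your version is the torsion/Euler characteristic identity. Two smaller imprecisions worth fixing: the $p$ intersection points occupy distinct $\Spc$-structures on $L(p,q)$ itself (not on the knot exterior), and that is what kills the differential; and because they lie in distinct $\Spc$-structures there are no Whitney disks between them, so the ``bigon $D_i$'' is really a relative domain used for grading bookkeeping rather than a disk in the Floer-theoretic sense -- it works, but the framework for relative Alexander gradings across $\Spc$-structures needs to be set up carefully in the rational case. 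Your closing deduction of the leading-coefficient statement from palindromic symmetry of $\Delta$ agrees with the paper's (terse) treatment.
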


\begin{restatable}{thm}{characterization}
\label{thm: characterization}
The following conditions are equivalent:
\begin{enumerate}
\item
the simple knot $K(p,q,k)$ fibers;
\item
the sequence $S(p,q,k)$ assumes its minimum value once; and
\item
the Alexander polynomial $\Delta(p,q,k)$ is monic. \qed
\end{enumerate}
\end{restatable}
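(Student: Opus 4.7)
The plan is to prove the three-way equivalence by establishing (2) $\Leftrightarrow$ (3) and (1) $\Leftrightarrow$ (3) separately. The equivalence (2) $\Leftrightarrow$ (3) is a direct corollary of Proposition \ref{prop: coeffs}: monicity of $\Delta(p,q,k)$ says its leading coefficient equals $1$, and by the proposition this leading coefficient is exactly the number of times $S(p,q,k)$ attains its minimum. The implication (1) $\Rightarrow$ (3) is classical: the Alexander polynomial of a fibered knot in a $\Q$-homology sphere is monic, since the fibration monodromy induces an isomorphism on the Alexander module and its characteristic polynomial governs the leading coefficient.

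For the substantive direction (3) $\Rightarrow$ (1), I would use Theorem \ref{thm: minimizer} to produce an explicit genus-minimizing Seifert surface $F$ for $K(p,q,k)$, built combinatorially out of the sequence $S(p,q,k)$. The expectation is that $F$ decomposes into elementary pieces indexed by the values and jumps of the function $g$ defining $S(p,q,k)$, with each occurrence of the minimum of $g$ contributing a distinct ``leaf'' region. The hypothesis that the minimum is attained at a unique argument should then force this decomposition to have a tree-like combinatorial structure so that the exterior of $F$ admits an obvious product structure, exhibiting $F$ as a fiber. This is the route that best fits with the paper's declared combinatorial and geometric methods. Alternatively, one can short-circuit the geometric argument by invoking Ni's theorem: a rationally null-homologous knot in a closed $3$-manifold fibers iff the rank of $\widehat{\mathrm{HFK}}$ in the extremal Alexander grading equals $1$, and for simple knots --- which are Floer minimal --- this top rank agrees with the leading coefficient of $\Delta$ by the Ozsv\'ath--Szab\'o computation of knot Floer homology for simple knots.

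The main obstacle is the geometric verification that the surface produced by Theorem \ref{thm: minimizer} really is a fiber when the minimum of $S(p,q,k)$ is unique; this requires exhibiting a product structure on the exterior of $F$, including an explicit monodromy, from the combinatorics of $g$ alone. A subtle point is that while the surface supplied by Theorem \ref{thm: minimizer} is always of minimum genus, being a fiber is strictly stronger than having minimum genus, so the argument must isolate precisely the combinatorial feature of $S(p,q,k)$ that upgrades a minimizer to a fiber. Matching this feature with ``the minimum is attained once'' is what makes the interplay between Theorem \ref{thm: minimizer} and Proposition \ref{prop: coeffs} do the work in this direction.
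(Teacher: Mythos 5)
Your handling of (2)~$\Leftrightarrow$~(3) is exactly the paper's, and your observation that (1)~$\Rightarrow$~(3) is classical is correct (though the paper never uses this direction in isolation). The problems lie in your proposed proofs of (3)~$\Rightarrow$~(1).

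Your primary geometric route has a genuine gap, and also misattributes the relevant construction. Theorem~\ref{thm: minimizer} does not produce a Seifert surface at all --- it is a counting formula for minima. The Seifert surface $F(p,q,k)$ is built in Section~\ref{s: branched surfaces} from the branched surface carried by the domain, and Theorem~\ref{thm: seifert} (proved \emph{after} Theorem~\ref{thm: characterization}) shows it is taut, not that it is a fiber. As you yourself note, tautness is strictly weaker than being a fiber, and you do not supply the missing ingredient that ``unique minimum'' forces a product structure on the exterior of $F$. In fact the paper never takes this route: identifying the combinatorial condition on $S(p,q,k)$ that characterizes fiberedness is precisely the nontrivial content of Theorem~\ref{thm:fibersiffS}, and asserting it as an ``expectation'' is assuming the conclusion.

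Your alternative HFK route is sound and is acknowledged in the paper's Complements discussion (Ni's fibration criterion plus the computation of $\widehat{\mathrm{HFK}}$ for Floer simple knots), but it is not what the paper does. The paper instead proves (1)~$\Leftrightarrow$~(2) directly and elementarily: it presents $\pi_1(X)$ as a one-relator group $\langle x,y \mid r\rangle$ from the doubly-pointed Heegaard diagram, rewrites the relator so that the partial products $s_i=x_0\cdots x_i$ have $\phi$-values tracking the jump function $g$, then applies Brown's algorithm \cite[Theorem 4.3]{brown1987} to conclude that $\ker\phi$ is finitely generated iff the minimum of $S(p,q,k)$ is attained once, and finally invokes Stallings' fibration theorem \cite{stallings1962}. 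This group-theoretic argument avoids Heegaard Floer machinery altogether and is the approach already used by Ozsv\'ath--Szab\'o in the primitive case. Your HFK route buys generality (it works for any Floer simple knot, not just simple knots), while the Brown--Stallings route buys economy and self-containment; but your primary route, as written, does not establish (3)~$\Rightarrow$~(1).
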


Both results should look familiar to experts, and we do not claim any originality for either one.
In fact, Theorem \ref{thm: characterization} holds in somewhat greater generality, as we discuss in connection with Floer homology just below.
The equivalence (1)$\iff$(2) in Theorem~\ref{thm: characterization} follows from work of Brown and Stallings \cite{brown1987,stallings1962}.
It has already been applied in this setting by Ozsv\'ath and Szab\'o, who used it to prove the case $\theta = p$ of Theorem \ref{thm: fiber} \cite[Proposition 1.8]{os:lens}.
The equivalence (2)$\iff$(3) follows from Proposition \ref{prop: coeffs}.
We prove a sharper version of Proposition \ref{prop: coeffs} in Section \ref{s: fibering}, which is needed in the proof of Theorem \ref{thm: seifert} below.

Theorem \ref{thm: fiber} follows at once on combination of Theorem \ref{thm: characterization} with:

\begin{restatable}{thm}{thmminimizer}
\label{thm: minimizer}
The number of times $S(p,q,k)$ achieves its minimum is
\[
\prod_{0 < j < n, \, \theta | r_j} {\lceil r_{j-1}/r_j \rceil}=\prod_{0 < j < n, \, \theta | r_j} (d_{j+1}+1),
\]
where $\theta$ denotes the order of $k \pmod p$ and $r_i$ and $d_i$ denote the remainders and the coefficients of the Euclidean algorithm applied to the pair $(p,q)$.
In particular, the minimum is attained uniquely iff $\theta$ is not a harmonic of the pair $(p,q)$.
\end{restatable}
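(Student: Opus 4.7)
The plan is to prove Theorem~\ref{thm: minimizer} by induction on the length $n$ of the Euclidean algorithm applied to $(p, q)$, after first recasting the counting problem in a convenient form. Writing $N(x) = |S \cap \{1, \dots, x\}|$, we have $g(x) = g(0) + tx - \theta N(x)$. Because $g(x) \equiv g(0) + tx \pmod \theta$ with $\gcd(t, \theta) = 1$, all minimizers of $g$ lie in a single arithmetic progression $\{x_0 + j\theta : 0 \le j < \eta\}$ of length $\eta = p/\theta$, where $x_0$ is some fixed minimizer. The task thus reduces to counting those $j$ for which $g(x_0 + j\theta) = g(x_0)$, equivalently $|S \cap (x_0, x_0 + j\theta]| = l' j$, where $l = \eta l'$ and $l' = t$.

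The base case $n = 1$ corresponds to $q = 1$: then $S = \{1, \dots, l\}$, so $g$ strictly decreases on $[0, l]$ and strictly increases on $[l, p]$, the unique minimizer is $l$, and the empty product equals $1$, as required.

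For the inductive step, the heart of the argument is a \emph{shift lemma}: if $\theta \mid r_j$ for some $0 < j < n$ and $x$ is a minimizer with $0 \le x + r_j \le p - 1$, then $x + r_j$ is a minimizer, and moreover this shift may be iterated at most $d_{j+1}$ times while remaining within the minimizer set. Because $g(x + r_j) - g(x) = \theta\bigl(l r_j/p - |S \cap (x, x + r_j]|\bigr)$ and $l r_j / p$ is an integer when $\theta \mid r_j$, the lemma reduces to showing that an interval of length $r_j$ starting from a minimizer contains exactly the "expected" number of elements of $S$. I would prove it by unpacking how the set $S = \{iq \bmod p : 1 \le i \le l\}$ decomposes under the Euclidean steps: the multiples $iq$ cluster into blocks corresponding to successive wraparounds, and the balance condition $|S \cap (x, x + r_j]| = l r_j/p$ holds precisely when $x$ aligns with block boundaries at the $j$-th level. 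Iterating the shift lemma across $j \in J := \{j : 0 < j < n,\ \theta \mid r_j\}$ produces minimizers $x_0 + \sum_{j \in J} i_j r_j$ with $i_j \in \{0, \dots, d_{j+1}\}$, a product set of size $\prod_{j \in J}(d_{j+1} + 1)$. To complete the proof, one shows every minimizer arises this way, by applying the shift lemma in reverse to argue that shifts outside the allowed range strictly increase $g$.

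The main obstacle is proving the shift lemma itself: establishing the exact count $|S \cap (x, x + r_j]| = l r_j/p$ at the correct positions and for each relevant $j$. This is essentially a statement about the three-distance structure of multiples of $q \bmod p$ refined through the entire continued fraction expansion, and I anticipate the proof will proceed by a secondary induction on $j$ that tracks how the distribution of $S$ in nested intervals evolves across Euclidean steps. The divisibility $\theta \mid r_j$ is precisely what aligns this nested combinatorics so that the count is exactly the average rather than differing by one, and the bound $i_j \le d_{j+1}$ emerges from the identity $r_{j-1} = d_{j+1} r_j + r_{j+1}$: after $d_{j+1}$ shifts by $r_j$ the remaining slack is only $r_{j+1} < r_j$, pushing the next shift out of the balanced regime.
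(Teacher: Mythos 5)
Your high-level plan is sound and it does echo the paper's structure: the ``shift lemma'' you propose (minima translate by $r_j$ when $\theta \mid r_j$, in chains of length $d_{j+1}+1$) is precisely the content of Theorem~\ref{thm:lotsofzerosg} (the $(r_{j-1},r_j)$-symmetry on minima), and your product-set description of the minimizers is consistent with what the paper ultimately establishes. The observation that all minimizers lie in a single residue class mod $\theta$ (since $g(x)\equiv g(0)+tx \pmod\theta$ with $\gcd(t,\theta)=1$) is correct and clean, as is the base case $n=1$.

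However, there is a genuine gap, and it sits exactly where you flag it: the shift lemma is not proven, only described as ``the main obstacle.'' The claim that an interval of length $r_j$ starting at a minimizer contains exactly $l r_j/p$ elements of $S$ is the technical heart of the theorem, and your sketch (``aligns with block boundaries at the $j$-th level,'' ``secondary induction on $j$ tracking the distribution of $S$'') does not come close to establishing it. In the paper, this statement is obtained from a nontrivial geometric argument (Section~\ref{sec:findingmins}): one changes coordinates on the Heegaard torus via the matrix $C_i$, builds parallelograms $\Delta_1\cup\Delta_2$ whose translates are constrained by linking-number computations (Lemmas~\ref{lem: linking 1} and~\ref{lem: linking 2}), and derives Proposition~\ref{prop:lotsofzeros}. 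This is not a side detail one can wave through.

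There are two further gaps. First, you never address the steps $j$ with $\theta\nmid r_j$; the congruence observation bounds the number of minimizers by $p/\theta$ but does not localize them, and one still needs an argument (the paper's Lemmas~\ref{lem: minima} and~\ref{lem: minima 2-2}) to show the minimizer count does not change across such steps. Second, the completeness claim---that \emph{every} minimizer is of the form $x_0+\sum_{j\in J} i_j r_j$ with $i_j\le d_{j+1}$, and not one more---is asserted via ``applying the shift lemma in reverse,'' but showing that a shift ``out of the balanced regime'' strictly increases $g$ is nonobvious and needs a proof. The paper achieves both of these through the recursive interval-refinement of Section~\ref{s: minima}, with the careful $r$-isolation and left-/right-like bookkeeping that makes the induction close. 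As written, your proposal identifies the correct skeleton but leaves the load-bearing lemmas unsupported.
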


Note the conventions that $r_0 = q$ and that the empty product equals $1$.

To get a geometric handle on the sequence $S(p,q,k)$, we describe in Section \ref{s: branched surfaces} how to produce a {\em rational domain} associated to a simple knot.
From it we describe how to construct a branched surface in the knot exterior and in turn a (rational) Seifert surface $F(p,q,k)$ for the knot which the branched surface fully carries.

\begin{restatable}{thm}{thmseifert}
\label{thm: seifert}
The Seifert surface $F(p,q,k)$ is taut.
\end{restatable}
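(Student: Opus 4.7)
My plan is to prove tautness by matching the (rational) Thurston complexity of $F(p,q,k)$ with a sharp lower bound coming from the Alexander polynomial $\Delta(p,q,k)$.

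First I would compute $-\chi(F(p,q,k))$ directly from the branched surface construction in Section \ref{s: branched surfaces}. The branched surface is assembled from pieces indexed by the regions of the rational domain associated to the sequence $S(p,q,k)$, and $F(p,q,k)$ is the surface it carries with weights reflecting the function $g$ from \eqref{eq: jump function}. Decomposing $\chi(F(p,q,k))$ as a sum of contributions over these pieces should yield a closed-form expression in terms of $p$, $\theta$, and the heights and jumps of $S(p,q,k)$. Boundary components of $F$ should correspond to orbits of the relevant shift on $\{0,\dots,p-1\}$, which keeps the normalization tractable.

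Next I would apply the lower bound on the (rational) Thurston norm coming from the Alexander polynomial. Because simple knots are Floer minimal, the top Alexander grading of $\widehat{\mathrm{HFK}}(K(p,q,k))$ detects the rational Seifert genus via the genus-detection results of Ozsv\'ath--Szab\'o and Ni, producing a sharp lower bound expressible in terms of $\deg \Delta(p,q,k)$. By Proposition \ref{prop: coeffs}, together with the sharpened form referenced in the excerpt, this degree is itself a readable invariant of $S(p,q,k)$, essentially encoding the span of $g$.

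Finally I would verify that the two quantities agree. The main obstacle is this combinatorial identification: each piece of the branched surface contributes locally to $\chi(F(p,q,k))$, and this local contribution must match the corresponding local contribution to $\deg \Delta(p,q,k)$ coming from the same region of $S(p,q,k)$. A subsidiary concern is the rational normalization, since $F(p,q,k)$ represents a multiple of $[K(p,q,k)]$ in homology, so both $-\chi(F)$ and the Alexander lower bound must be divided by the same order factor for a valid comparison. Once this equality is established, tautness follows at once, since any surface of smaller complexity in the same relative homology class would violate the Alexander-polynomial bound.
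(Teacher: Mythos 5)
Your overall strategy is the right one — bound the complexity from below using the Alexander polynomial and then verify that $F(p,q,k)$ meets the bound — and this is indeed the skeleton of the paper's argument. But there is a genuine gap at the step you label "verify that the two quantities agree": you propose to compute $\chi(F(p,q,k))$ directly and match it term-by-term to $\mathrm{breadth}\,\Delta$, and that direct comparison does not go through as stated. The problem is visible in Lemma \ref{lem:euler}: $\chi(F) = \theta - \tfrac14\sum |R_i\cap \partial X_K|\cdot c_i$, and the term $\sum |R_i\cap \partial X_K|\cdot c_i$ depends on \emph{which} regions $R_i$ touch $\partial X_K$ (i.e.\ which of the four regions at each of $x_0$, $x_k$ carry a given weight), not just on the multiset of coefficients $\{c_i\}$ or on the maximum $M = \max_i c_i$. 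Meanwhile Proposition \ref{prop: coeffs 2} expresses $\mathrm{breadth}\,\Delta = 1-\theta+M$. There is no clean "local-to-local" correspondence by which each region of the domain contributes equally to $-\chi(F)$ and to $\mathrm{breadth}\,\Delta$.

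The missing idea in the paper is a reflection trick. Replacing $c_i$ by $M - c_i$ gives the $2$-chain $C'$ of a Seifert surface $S'$ for $-K(p,q,k)$. Summing the formula of Lemma \ref{lem:euler} for $S=F$ and for $S'$ makes the awkward boundary term collapse: $\sum|R_i\cap\partial X_K|\cdot c_i + \sum|R_i\cap\partial X_K|\cdot(M-c_i) = M\sum|R_i\cap\partial X_K|$, a quantity independent of the distribution of the $c_i$, so $\chi(S)+\chi(S') = 2\theta-2M$. Since $\Delta$ is orientation-insensitive, this gives $(1-\chi(S)) + (1-\chi(S')) = 2\cdot\mathrm{breadth}\,\Delta$, and combined with the one-sided inequality $\mathrm{breadth}\,\Delta \le 1-\chi$ for \emph{each} of $S$ and $S'$, both inequalities must be equalities. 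Without this pairing, the single-surface computation you outline would require knowing precisely how the maximizing and boundary-adjacent regions are arranged, and there is no reason to expect the equality to be manifest. A secondary, minor point: you suggest routing the lower bound through Heegaard Floer genus detection, but the classical McMullen inequality on $\mathrm{breadth}\,\Delta$ already suffices and is what the paper uses; this part of your proposal is an unnecessary detour, not an error.
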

\noindent
Relatedly, Ken Baker asks the following:

\begin{ques}
\label{q: unique}
Does every simple knot in a lens space admit a unique taut Seifert surface?
\end{ques}

\subsection{Complements.}

{\em Floer homology.}
We briefly discuss the relationship between the material of the previous subsection and Floer homology.
If $Y$ is a rational homology sphere and $K \subset Y$ is a knot with (rational) Seifert surface $F$, then $K$ is fibered with fiber $F$ if and only if a piece of $\widehat{\mathrm{HFK}}(K)$ determined by $F$ is infinite cyclic \cite[Theorem 2.3]{niwu2014}.
For the case of a Floer simple knot $K$, it is easy to check that this group is infinite cyclic if and only if its Alexander polynomial $\Delta(K)$ is monic.
Thus, the equivalence (1)$\iff$(3) of Theorem \ref{thm: characterization} holds more generally for Floer simple knots.
In addition, there is a related condition to Theorem \ref{thm: characterization} (1)$\iff$(2) involving the $d$-invariant.
Specifically, Ni and Wu show that a Floer simple knot $K$ in an $L$-space $Y$ fibers iff the sequence of values
\[
d(Y,s+\mathrm{PD}[K])-d(Y,s), \quad s \in \Spc(Y),
\]
attains its maximum (equivalently, its minimum) value exactly once \cite[Proposition 5.2]{niwu2014}.
Moreover, their argument and the symmetry of the Alexander polynomial shows that the number of times this sequence attains its minimum value is equal to the leading coefficient of $\Delta(K)$.
If $K$ is simple, then Proposition \ref{prop: coeffs} and the symmetry of $\Delta(K)$ shows that this value equals the number of times $S(p,q,k)$ attains its minimum value.
All told, the number of times the above differences of $d$-invariants above attain their minimum value is equal to the number of times $S(p,q,k)$ attains its minimum value.
There exist various means for calculating $d$-invariants, both in terms of explicit formulas and in terms of lengths of vectors in lattices, which have led to many applications.
We were briefly tempted to deploy them towards proving Theorems \ref{thm: fiber} and \ref{thm: minimizer}.
However, we ultimately prevailed in terms of the direct definition of the sequence $S(p,q,k)$.

{\em The discovery of Theorem \ref{thm: fiber}.} We first conjectured Theorem \ref{thm: fiber} on the basis of computer experiment.
Using Theorem \ref{thm: characterization} (1) $\iff$ (2), we wrote a short Mathematica script to determine all triples $(p,q,k)$ for which $K(p,q,k)$ fibers with $p \le 100$.
Various patterns emerged besides those we already knew from \cite{cebanuthesis} and \cite{os:lens}.
For instance, we noticed early on that $K(p,q,k)$ fibers if $p \le 100$ and $p \equiv 1 \pmod q$, independent of $k$.
The condition that $p \equiv 1 \pmod q$ is equivalent to condition that the continued fraction expansion of $p/q$ has length two and in turn that $n = 2$ in the Euclidean algorithm applied to the pair $(p,q)$.
We then looked at pairs $(p,q)$ for which the continued fraction expansion has length three, inspired by this observation and the historic  interplay between continued fractions and the topology of lens spaces.
That case involved $k$ and led us to conjecture Theorem \ref{thm: fiber}, which perfectly matched the data.
The proofs took much longer to find!

{\em Race tracks.}
Finally, we mention a passing relationship between our work and the famous race track problem, which we quote from Lov\'asz's problem book \cite[Problem 3.21]{lovasz:problems}:

\begin{quotation}
Along a speed track there are some gas-stations.
The total amount of gasoline available in them is equal to what our car (which has a very large tank) needs for going around the track.
Prove that there is a gas-station such that if we start there with an empty tank, we shall be able to go around the track without running out of gasoline.
\end{quotation}
(See \cite{lovasz:problems} for a solution.)
We imagine a speed track $p$ units in length around which there are $k$ gas-stations spaced $q$ units apart, each of which holds enough gasoline to get the car $p/k$ units around the track.
The sequence $S(p,q,k)$ records, up to an affine shift, the car's fuel reading at each unit of its circuit.
The number of gas-stations where the car can begin a full clockwise circuit on an empty tank is equal to the number of minima of the sequence $S(p,q,k)$.


\subsection{Organization.}

Theorem~\ref{thm: minimizer} and its corollary, Theorem~\ref{thm: fiber}, are proven over the course of Sections~\ref{ss: domains}, \ref{sec:findingmins}, and \ref{s: minima}.

In Section~\ref{s: branched surfaces}, we give a geometric, two-dimensional interpretation to the sequence $S(p,q,k)$.
We do this in terms of rational domains in subsection \ref{ss: domains} and in terms of branched surfaces in subsection \ref{ss: branched surface}.
The first interpretation gets used towards the proofs of Theorems~\ref{thm: fiber} and \ref{thm: minimizer}, while the second one leads to the construction of the Seifert surface $F(p,q,k)$ for $K(p,q,k)$ appearing in Theorem~\ref{thm: seifert}.

In Section~\ref{sec:findingmins}, we show that if $\theta$ is a harmonic of the pair $(p,q)$, then the sequence $S(p,q,k)$ attains its minimum value more than once.
On combination with Theorem~\ref{thm: characterization}, this establishes the ``only if" direction of Theorem \ref{thm: fiber}.
The point of view from Section~\ref{s: branched surfaces} reveals symmetries among the minima of $S(p,q,k)$ in terms of the coefficients of $(p,q)$.
In particular, Theorem~\ref{thm:lotsofzerosg} shows that the minima of $S(p,q,k)$ occur in sequences of length $\lceil r_{i-1} / r_{i} \rceil$ and on a scale of length $r_i$, where $\theta$ divides the remainder $r_i$ of $(p,q)$.
This information then gets used in the proof of Theorem~\ref{thm: minimizer}.

In Section~\ref{s: minima}, we give a recursive procedure for isolating the minima of $S(p,q,k)$ into successively smaller intervals of length $r_i$, the remainders of $(p,q)$ for which $\theta \nmid r_i$.
At the end this section we prove Theorem~\ref{thm: minimizer}, by combining the restriction of the minima when $\theta \nmid r_i$ with the sequences of minima found in Theorem~\ref{thm:lotsofzerosg} when $\theta \mid r_i$. 
The argument of Section \ref{s: minima} was first discovered and carried out under the assumption that $\theta$ is not a harmonic of $(p,q)$, from which the ``if" direction of Theorem \ref{thm: fiber} follows.
In this case, a recursive application of Lemmas~\ref{lem: minima} and \ref{lem: minima 2-2} of Sections~\ref{ss: minima prep} and \ref{ss: minima does not divide} implies the uniqueness of the minimum of $S(p,q,k)$.
We recommend going through this case at a first pass in order to understand the structure of the argument.

In Section~\ref{s: fibering}, we prove the secondary results Proposition~\ref{prop: coeffs} and Theorem \ref{thm: characterization}.
Again, this material is classical, and we do not claim any originality about the results or their proofs.
Using the information about the Alexander polynomial and a lemma from subsection~\ref{ss: branched surface}, we prove Theorem \ref{thm: seifert}.
We conclude with the short deduction of Corollary~\ref{c: order bound} from Theorem \ref{thm: fiber}.


\section*{Acknowledgments.}
We thank Faramarz Vafaee for conversations that sparked this work and Ken Baker and John Baldwin for stimulating questions along the way.


\section{Simple knots and branched surfaces.}
\label{s: branched surfaces}

Given a simple knot $K \subset L(p,q)$, we review how to construct a non-negative domain $D$ on the Heegaard torus associated with it.
Using the domain $D$, we construct a branched surface $\cB$ in the exterior of $K$.
The branched surface $\cB$ fully carries a rational Seifert surface $F$ for $K$.
Later, we show in Theorem \ref{thm: seifert} that $F$ is taut.
It then follows from Theorem \ref{thm: fiber} that $F$ is a fiber iff $D$ has a single region with coefficient 0.


\subsection{Chains and domains.}
\label{ss: domains}
The construction of a non-negative domain associated with a simple knot $K \subset L(p,q)$ follows a familiar construction in Heegaard Floer homology (see, for instance, \cite[\S5]{greene:2013-1}).
What is noteworthy is the manner of presentation of $K$ as a 1-cycle connecting a pair of generators in the Heegaard Floer chain complex.

Let $T = \bR^2 / \bZ^2$ denote a flat, oriented 2-torus.
Let $\alpha \subset T$ be a geodesic curve of slope $0/1$ and $\beta \subset T$ a geodesic curve of slope $p/q$, oriented so that $\alpha \cdot \beta = p > 0$.
Let $V_\alpha$ and $V_\beta$ be oriented solid tori with meridian disks $D_\alpha$ and $D_\beta$.
We choose diffeomorphisms $\del V_\alpha \approx T$ and $-\del V_\beta \approx T$ sending $\del D_\alpha$ to $\alpha$ and $\del D_\beta$ to $\beta$.
Gluing with these diffeomorphisms results in the lens space $L(p,q)$.
We co-orient $T \subset L(p,q)$ using the outward-pointing co-orientation to $V_\alpha$.

Let $x_0,\dots,x_{p-1}$ denote the intersection points between $\alpha$ and $\beta$, labeled in the cyclic order that they appear around $\alpha$.
The curves $\alpha$ and $\beta$ give a cell decomposition of $T$ whose 2-cells are called {\bf regions}.
Label the regions $R_0,\dots,R_{p-1}$ in the cyclic order that they appear to the right of $\alpha$, so that the top left corner of $R_i$ is $x_i$.

For a positive integer $k < p$, the oriented simple knot $K(p,q,k) \subset L(p,q)$ is the union of two oriented arcs along their boundaries, $K(p,q,k)= \delta \cup \epsilon$, where $\delta$ is properly embedded in $D_\alpha$, $\epsilon$ is properly embedded in $D_\beta$, and $\del \delta = - \del \epsilon = x_0 - x_k$.
The arcs $\delta$ and $\epsilon$ are unique up to isotopy rel. endpoints.
They are isotopic rel. endpoints within their respective meridian disks to a pair of oriented subarcs $\alpha_1 \subset \alpha$, $\beta_1 \subset \beta$.
Let $\alpha_2 \subset \alpha$, $\beta_2 \subset \beta$ denote the complementary oriented subarcs.

Let $\gamma = \alpha_1 \cup \beta_1$.
It is a 1-cycle on $T$ homologous to $K(p,q,k)$ in $L(p,q)$.
The homology group $H_1(L(p,q);\bZ)$ is isomorphic to the quotient of $H_1(T;\bZ)$ by the subgroup freely generated by the classes $[\alpha]$ and $[\beta]$.
Let $\theta$ denote the order of $[K(p,q,k)] \in H_1(L(p,q);\bZ)$; this is the same as the order of $k\pmod p$, which is $p / \gcd(p,k)$.
It follows that $\theta \cdot [\gamma] \in H_1(T;\bZ)$ belongs to the subgroup freely generated by $[\alpha]$ and $[\beta]$, so there exist unique coefficients $d, e \in \bZ$ such that $\theta \cdot [\gamma] = d [\alpha] + e [\beta] \in H_1(T;\bZ)$.
As a result, there exists a 2-chain $\sum c_i [R_i] \in C_2(T;\bZ)$ such that $\del \sum c_i [R_i] = \theta \cdot [\gamma] - d [\alpha] - e [\beta]$.
Any two such 2-chains differ by a multiple of the fundamental class $[T] = \sum [R_i]$.
The unique one obeying the additional condition that $\min (c_i)  = 0$ is the {\bf domain} $D$ associated with $K$.
Note that $D$ is $\theta$ times a {\em rational} domain associated to the pair of generators $x_0$, $x_k$ in the Floer chain complex of the Heegaard diagram $(T,\alpha,\beta)$ for $L(p,q)$.

\begin{figure}
\includegraphics[width=6in]{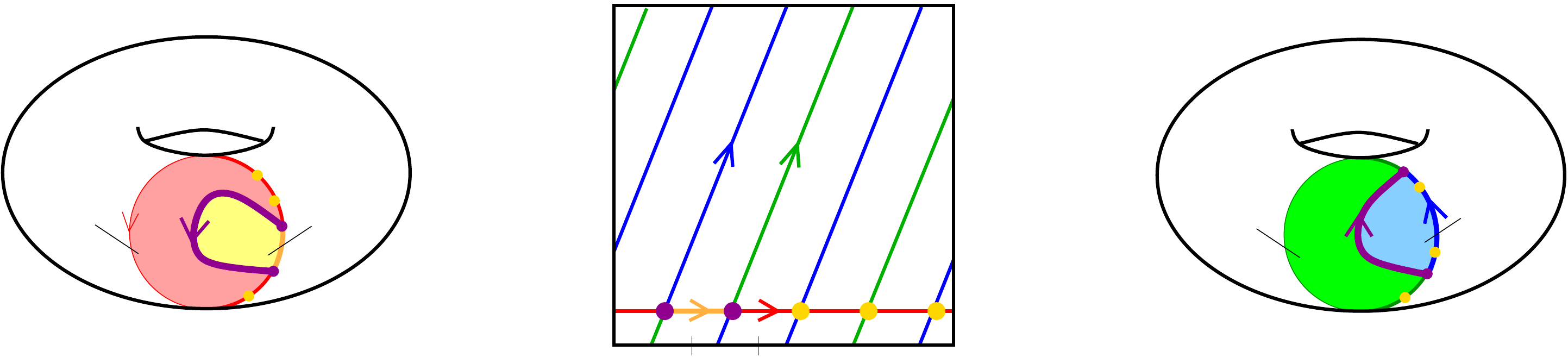}
\put(-350,40){$D^2_\alpha$}
\put(-425,40){$D^1_\alpha$}
\put(-30,40){$D^1_\beta$}
\put(-100,40){$D^2_\beta$}
\put(-392,30){$\overset{\curvearrowright}{1}$}
\put(-375,30){$\overset{\curvearrowleft}{4}$}
\put(-50,30){$\overset{\curvearrowright}{2}$}
\put(-70,30){$\overset{\curvearrowleft}{3}$}
\put(-255,30){$\overset{\curvearrowleft}{2}$}
\put(-235,30){$\overset{\curvearrowleft}{0}$}
\put(-215,30){$\overset{\curvearrowleft}{3}$}
\put(-197,30){$\overset{\curvearrowleft}{1}$}
\put(-180,30){$\overset{\curvearrowleft}{4}$}
\put(-250,-10){$R_0$}
\put(-230,-10){$R_1$}
\put(-210,-10){$\cdots$}
\caption{The domain $D$ of $K(5,2,1)$ on $T$ and its extension to a 2-chain $C$ on the spine of $L(5,2)$.  
The orientations on the 2-cells  are indicated by the arrows over their coefficients.
For quick reference: $\theta = 5$, $t = 3$, and $t' = 1$.}
\label{fig: domain}
\end{figure}

We may express the values $d$ and $e$ in terms of the parameters $p,q,k$, and $\theta$, as follows.
The points of $\alpha \cap \beta$ occur in the order $x_0, x_q, x_{2q}, \dots$ along $\beta$.
It follows that $\beta_1$ is cut into $l = [q' k]_p$ subarcs by $\alpha$, using the notation of \eqref{eq: l theta t}.
Let $\alpha'$ denote a translate of $\alpha$ to its left.
We have
\begin{equation}
\label{eq: intersection numbers}
\alpha' \cdot \del D = 0, \quad \alpha' \cdot \alpha = 0, \quad \alpha' \cdot \beta_1 = l, \quad \alpha' \cdot \beta_2 = p-l,
\end{equation}
by construction and the fact that $\del D$ is null-homologous.
We also have
\[
\del D = (\theta - d) \alpha_1 - d \alpha_2 + (\theta - e) \beta_1 - e \beta_2.
\]
Combining the indented equations gives $0 = \alpha' \cdot \del D = (\theta - e) l - e (p-l)$, leading to the identity $e = \theta l / p = t$, again using the notation of \eqref{eq: l theta t}.
Similarly, let $\beta'$ denote a translate of $\beta$ to its left.
We have
\[
\del D \cdot \beta' = 0, \quad \beta \cdot \beta' = 0, \quad \alpha_2 \cdot \beta' = k, \quad \alpha_1 \cdot \beta' = p-k.
\]
Consequently, $0 = \del D \cdot \beta' = (\theta-d)(p-k)-d k$, leading to the identity $d = (p-k) \theta / p = \theta - t'$, where $t' := k \theta/p$ by analogy to $t$.
In total, we have
\begin{equation}
\label{eq: del D}
\del D = t' \alpha_1 - (\theta - t') \alpha_2 + (\theta -t) \beta_1 - t \beta_2.
\end{equation}
Note that as $0 < k,l < p$, the coefficients on $\alpha_1$ and $\beta_1$ are positive, while the coefficients on $\alpha_2$ and $\beta_2$ are negative.

Next, we extend the domain $D$ to a 2-chain $C$ in $L(p,q)$ with boundary $\theta \cdot K(p,q,k)$.
The {\em spine} of $L(p,q)$ is the 2-complex $T \cup D_\alpha \cup D_\beta$.
We obtain a cell decomposition of the spine of $L(p,q)$ by taking the cell decomposition of $T$, attaching the disks $D_\alpha$, $D_\beta$ to $T$, and then subdividing the disks by the arcs $\delta$ and $\epsilon$, respectively.
We thereby obtain 2-cells $D^j_\alpha \subset D_\alpha$, $D^j_\beta \subset D_\beta$, $j=1,2$, with oriented boundaries
\[
\del D^1_\alpha = \delta \cup -\alpha_1, \quad \del D^2_\alpha = \delta \cup \alpha_2, \quad \del D^1_\beta = \epsilon \cup -\beta_1, \quad \del D^2_\beta = \epsilon \cup \beta_2.
\]
The orientations on $D^2_\alpha$ and $D^2_\beta$ match the orientations on $D_\alpha$ and $D_\beta$, while the orientations on $D^1_\alpha$ and $D^1_\beta$ are opposite to them.
Finally, form the 2-chain
\begin{equation}
\label{eq: 2-chain}
C= D +  t' D^1_\alpha + (\theta - t') D^2_\alpha + (\theta - t)D^1_\beta + t D^2_\beta
\end{equation}
in the cell decomposition of the spine.
By construction, $\del C = \theta \cdot (\delta \cup \epsilon) = \theta \cdot K(p,q,k)$.

Lastly, consider two consecutive regions $R_{i-1}$ and $R_i$, indices$\pmod p$.
They contain a common arc of $\beta - \alpha$ in their boundaries, and the coefficient on this arc in $\del D$ is equal to the difference $c_{i-1} - c_i$.
If it is an arc of $\beta_1$, then we have $c_i - c_{i-1} = t-\theta < 0$.
This occurs precisely when $i \in S = \{[q]_p,[2q]_p,\dots,[lq]_p\}$.
If instead it is an arc of $\beta_2$, then we have $c_i - c_{i-1} = t > 0$.
This occurs precisely when $i \notin S$.
Letting $g : \{0,\dots,p-1\} \to \bZ$ be the function of \eqref{eq: jump function} with $g(0)=c_0$, we have

\begin{equation}
\label{eq: gandc}
g(i) = c_i
\end{equation}

That is, the sequence $c_0, \dots, c_{p-1}$ is $S(p,q,k)$. 


\subsection{Construction of a branched surface and a Seifert surface.}
\label{ss: branched surface}

In this subsection, we see that 
the spine of $L(p,q)$ in the previous section gives rise to a co-oriented branched surface properly embedded
in $X_K$, the exterior of the knot $K=K(p,q,k)$.
The 2-chain $C$ of this spine induces an invariant measure on this branched surface corresponding to a rational Seifert surface for $K$ fully carried by the branched surface. This invariant measure is given in terms of $t,t',\theta$ and the sequence $S(p,q,k)$, which are easily computed from $p,q,k$ as described in Section~\ref{ss: notation}. Theorem~\ref{thm: seifert} shows that the Seifert surface resulting from this procedure is taut (i.e. Thurston norm minimizing). We refer to Oertel's paper \cite[$\S$1]{oertel84} for standard definitions, notation, and figures concerning branched surfaces.

The spine of $L(p,q)$ restricts to a properly embedded 2-complex in $X_K$ with a corresponding cell decomposition
 which we refer to as the {\em spine of $X_K$}. We will refer to the corresponding 
oriented 2-cells in the spine of $X_K$ as $R_i, D^i_{\alpha}$, and $D^i_{\beta}$, and the corresponding  oriented 1-cells
as $\alpha_i, \beta_i$. The 2-chain $C$ becomes a 2-chain, $C$, in the spine of  $X_K$. 

Let $T'=T \cap X_K$. The cells of $T'$ are co-oriented by the outward pointing normal to $V_{\alpha}$. The cells $ D^1_{\alpha}, D^2_{\alpha}, D^1_{\beta},  D^2_{\beta}$ are then co-oriented by the orientation on their boundary and the ambient orientation on $L(p,q)$.
The spine of $X_K$ can then be made into a co-oriented branched surface, $\mathcal{B'}$, properly embedded in $X_K$, by adding $ D^1_{\alpha}, D^2_{\alpha}, D^1_{\beta},  D^2_{\beta}$ to $T'$ and
smoothing so that the co-orientations of the added cells agree with that of $T'$. See Figure~\ref{fig:branched1} and Figure~\ref{fig:branchedatboundary}. The weights of the cells in the 2-chain $C$ become nonnegative weights
on the sectors of $\mathcal{B'}$ forming an invariant measure on $\mathcal{B'}$. Explicitly, these 
weights are: $c_i$ on $R_i$, $t'$ on $ D^1_\alpha$, $(\theta - t')$ on $D^2_\alpha$,  $(\theta - t)$ on $D^1_\beta$,
and $t $ on $D^2_\beta$. Recall from Section~\ref{ss: domains} that the sequence $c_0, \dots, c_{p-1}$ is the sequence $S(p,q,k)$. We then modify $\mathcal{B'}$ by removing those cells $R_i$ that
have weight zero. This gives a new co-oriented branched surface $\mathcal{B}$ and the weighting 
from the cells of $C$ gives an invariant measure on $\mathcal{B}$ with strictly positive weights.
This invariant measure on $\mathcal{B}$ corresponds to a surface $F = F(p,q,k)$ properly embedded in 
$X_K$.

\begin{figure}
\includegraphics[width=3.5in]{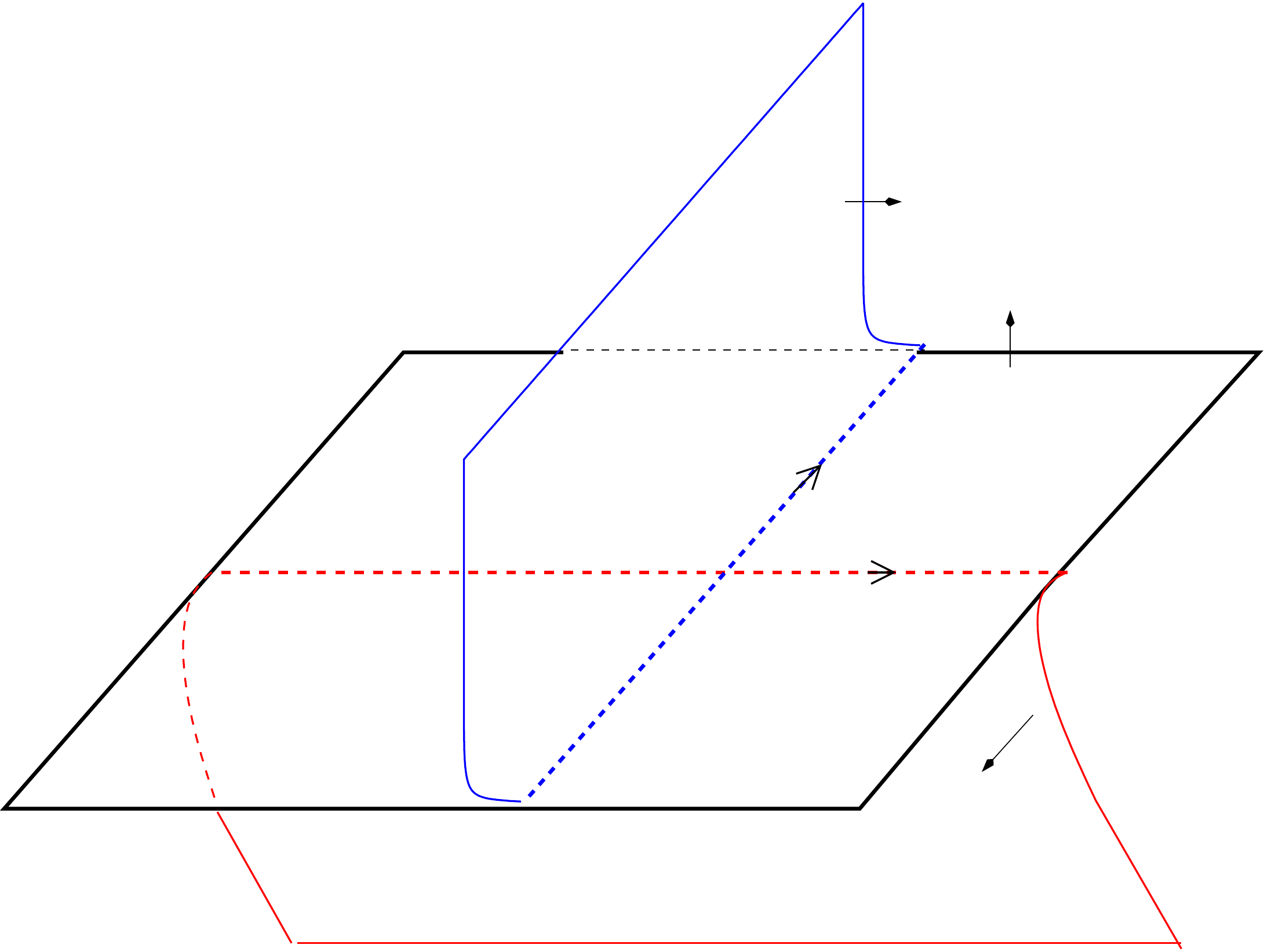}
\put(-85,95){$\beta_2$}
\put(-85,65){$\alpha_1$}
\put(-30,50){$D_\alpha^1$}
\put(-140,160){$D_\beta^2$}
\put(-175,110){\small \textcolor{magenta}{$w+t'$}}
\put(-55,110){\textcolor{magenta}{$w+t-t'$}}
\put(-100,150){\textcolor{magenta}{$t$}}
\put(-240,35){\textcolor{magenta}{$w$}}
\put(-105,35){\textcolor{magenta}{$w+t$}}
\put(-55,10){\textcolor{magenta}{$t'$}}
\caption{The branch locus of $\mathcal{B'}$ away from $\partial X_K$.}
\label{fig:branched1}
\end{figure}

\begin{figure}
\includegraphics[width=7in]{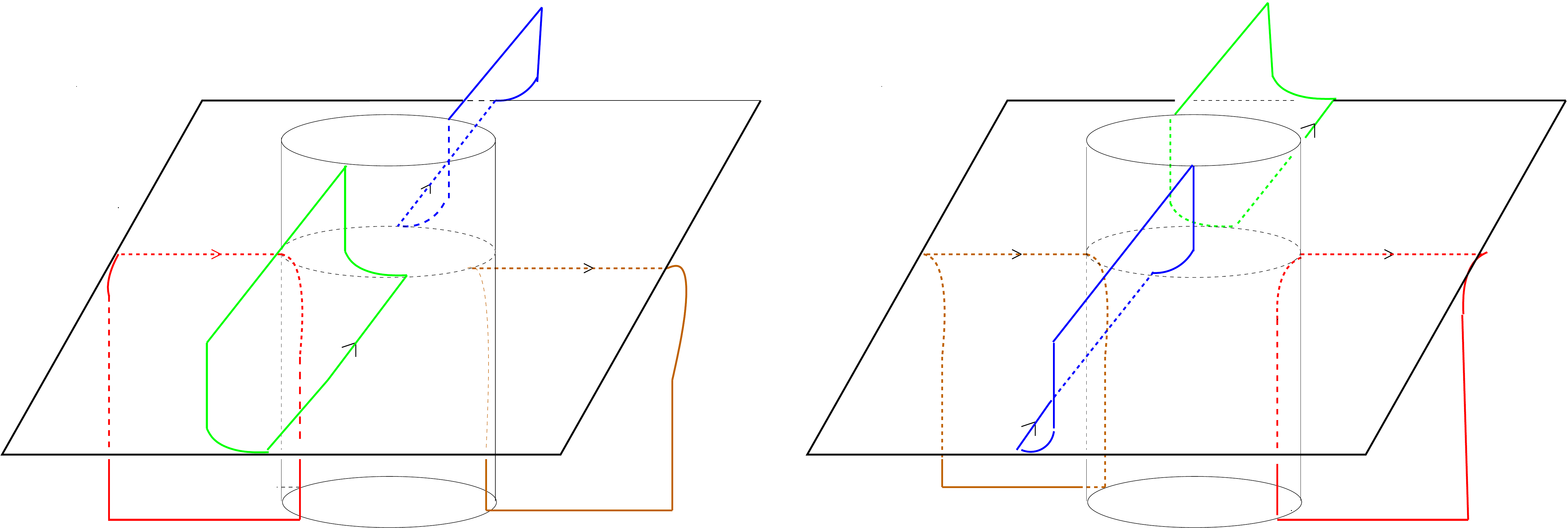}
\put(-28,5){$D_{\alpha}^1$}
\put(-450,92){$\alpha_1$}
\put(-335,88){$\alpha_2$}
\put(-72,92){$\alpha_1$}
\put(-385,105){$\beta_1$}
\put(-395,44){$\beta_2$}
\put(-77,127){$\beta_2$}
\put(-180,39){$\beta_1$}
\put(-198,92){$\alpha_2$}
\put(-490,5){$D_{\alpha}^1$}
\put(-287,5){$D_{\alpha}^2$}
\put(-220,5){$D_{\alpha}^2$}
\put(-123,160){$D_{\beta}^2$}
\put(-325,160){$D_{\beta}^1$}
\put(-370,160){\textcolor{magenta}{$\theta-t$}}
\put(-440,127){\textcolor{magenta}{$y+t'$}}
\put(-330,127){\textcolor{magenta}{$y+t+t'-\theta$}}
\put(-490,30){\textcolor{magenta}{$y$}}
\put(-345,30){\small \textcolor{magenta}{$y+t$}}
\put(-465,10){\textcolor{magenta}{$t'$}}
\put(-315,10){\textcolor{magenta}{$\theta - t'$}}
\put(-435,50){\textcolor{magenta}{$t$}}
\put(-180,127){\small \textcolor{magenta}{$x+t'$}}
\put(-170,117){\small \textcolor{magenta}{$-t$}}
\put(-40,127){\textcolor{magenta}{$x+t'$}}
\put(-105,150){\textcolor{magenta}{$t$}}
\put(-45,5){\textcolor{magenta}{$t'$}}
\put(-70,30){\textcolor{magenta}{$x$}}
\put(-240,30){\small \textcolor{magenta}{$x+\theta-t$}}
\put(-200,15){\small \textcolor{magenta}{$\theta-t'$}}
\put(-190,65){\textcolor{magenta}{$\theta-t$}}
\caption{The branch locus of $\mathcal{B'}$ at $\partial X_K$.
At left is a neighborhood of $x_0$, and at right is a neighborhood of $x_k$.}
\label{fig:branchedatboundary}
\end{figure}

The boundary $\partial\mathcal{B}$ of the branched surface forms a co-oriented traintrack on $\partial X_K$ that is pictured in Figure \ref{fig:traintrack}, where sectors of weight zero are to be removed.
The weighting of  $\mathcal{B}$ induces a weighting of the traintrack as pictured. The co-orientation and weighting 
show that $\partial F$ consists of a 1-manifold whose intersection number on $\partial X_K$ with
a component of $\partial T'$ (slightly shifted) is $\theta$ where $\theta$ is the order of $K$ in
$H_1(L(p,q))$. Thus $F$ is a rational Seifert surface for $K$. Note that $S$ must be connected
since $K$ has order $\theta$, $\mathcal{B}$ is co-oriented, and any closed surface in $L(p,q)$
is trivial in homology (because $c_i=0$ for some $i$, one can constuct a simple closed curve in $X_K$ intersecting $\mathcal{B}$ in any particular $R_j$). 

We will use the following result to show that $F$ is taut in Theorem \ref{thm: seifert}:

\begin{lem}
\label{lem:euler} 
The Euler characteristic of $F$ is $\chi(F) = \theta - (1/4) \sum |R_i \cap \partial X_K| \cdot c_i$, where $| \cdot |$ denotes the number of components.
\end{lem}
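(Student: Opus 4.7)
The plan is to compute $\chi(F) = V - E + F_2$ directly from the CW decomposition that $F$ inherits from $\mathcal B$. By the invariant measure, each sector $S$ contributes $w(S)$ two-cells to $F$, so
\[
F_2 = \sum_i c_i + t' + (\theta - t') + (\theta - t) + t = \sum_i c_i + 2\theta.
\]
Since $F$ is a surface, a double-count of polygon corners (each corner lies on one 2-cell and one vertex, at an interior vertex as many corners as edges meet, while at a boundary vertex the edge count exceeds the corner count by one) yields $\sum_S w(S)\, n(S) = 2E - V_\partial$, where $n(S)$ is the number of sides of the polygon sector $S$ and $V_\partial$ counts the boundary vertices of $F$. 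Consequently
\[
\chi(F) = V_{\mathrm{int}} + \tfrac12 V_\partial - \tfrac12 \sum_S w(S)(n(S) - 2).
\]

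Next I would read off the side counts from the cell structure of the spine in $X_K$. Each $R_i$ is a parallelogram in $T$ whose corners at $x_0$ or $x_k$ have been replaced by arcs of $\partial X_K$, so $n(R_i) = 4 + |R_i \cap \partial X_K|$. Each meridional half-disk, restricted to $X_K$, is bounded by the arcs of $\alpha_j$ (resp.\ $\beta_j$) together with an arc of $\partial X_K$ parallel to $\delta$ (resp.\ $\epsilon$) and two short arcs coming from the truncation at $x_0, x_k$, giving $n(D^1_\alpha) = k+3$, $n(D^2_\alpha) = (p-k)+3$, $n(D^1_\beta) = l+3$, $n(D^2_\beta) = (p-l)+3$. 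Using $t' = k\theta/p$ and $t = l\theta/p$, the $D$-contribution to $\tfrac12 \sum_S w(S)(n(S)-2)$ evaluates to
\[
\theta + \tfrac{\theta}{2p}\bigl(k^2 + (p-k)^2\bigr) + \tfrac{\theta}{p}\, l(p-l),
\]
while the $R_i$-contribution is $\sum_i c_i + \tfrac12 \sum_i c_i |R_i \cap \partial X_K|$.

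To finish, I would count $V_{\mathrm{int}}$ and $V_\partial$ by local analysis of the branch structure. Each interior vertex of $\mathcal B$ is an $x_i$ with $i \neq 0, k$, where two branch lines (an arc of $\alpha_j$ and an arc of $\beta_j$) cross; the branch relations implied by $\partial D = t'\alpha_1 - (\theta-t')\alpha_2 + (\theta-t)\beta_1 - t\beta_2$ (see Figure~\ref{fig:branched1}) determine the number of copies of $x_i$ in $F$. The boundary vertices partition into two groups of four, one near $x_0$ and one near $x_k$ on $\partial X_K$, with weights given explicitly in Figure~\ref{fig:branchedatboundary}. After substitution, the terms quadratic in $k, l, p$ cancel, leaving
\[
\chi(F) = \theta - \tfrac14 \sum_i |R_i \cap \partial X_K| \, c_i.
\]

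The main obstacle is this last computation of $V_{\mathrm{int}}$ and $V_\partial$: tracking the weighted vertex counts through the branch structure is the technical heart of the proof, relying on careful bookkeeping of the local branch patterns at each interior $x_i$ and at each of the eight boundary train-track vertices, where the sector weights combine via the invariant measure and must reassemble with the quadratic $D$-contributions computed above.
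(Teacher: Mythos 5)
Your proposal takes a genuinely different route from the paper's, but it has a real gap: the step you yourself flag as ``the technical heart of the proof'' --- determining $V_{\mathrm{int}}$ and $V_\partial$ and verifying that the quadratic terms cancel --- is never carried out. You set up the identity
$\chi(F)=V_{\mathrm{int}}+\tfrac12 V_\partial-\tfrac12\sum_S w(S)(n(S)-2)$
and compute the side counts $n(S)$ and the face count $F_2$, but then you assert without computation that the weighted vertex counts combine with the quadratic $D$-contributions $\frac{\theta}{2p}\bigl(k^2+(p-k)^2\bigr)+\frac{\theta}{p}l(p-l)$ to produce the claimed formula. Making that assertion precise would require knowing, for each interior point $x_i$ and each of the eight boundary traintrack vertices, exactly how many vertices of $F$ lie over it --- a global count determined by the gluing pattern of the weighted copies of the cells, not just by local branch data. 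As written, the proposal is a plan, not a proof.

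The paper's argument avoids this bookkeeping entirely by working with Euler measure: it assigns to each $2$-cell of $F$ a rational number representing its fractional contribution to $\chi$, so no global edge or vertex count is ever needed. The assignment $1-e_i/4$ to a cell over $R_i$ vanishes for the interior squares (four edges, four vertices each shared among four faces), giving $-|R_i\cap\partial X_K|/4$ for the boundary squares; the $2\theta$ cells coming from $D_\alpha^j,D_\beta^j$ each carry measure $1/2$. Summing over the $c_i$ copies of each $R_i$ and the $D$-cells yields the formula in one line. This is where the paper's approach buys substantial economy: the computation becomes local to each $2$-cell, whereas your direct $V-E+F$ count forces a global analysis of how cells assemble at every vertex of the branch locus. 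If you want to salvage the proposal along your lines, you must actually tabulate $V_{\mathrm{int}}$ and $V_\partial$ in terms of the weights $c_i,t,t',\theta$ and check the cancellation; alternatively, adopt the Euler-measure framing, which renders that step unnecessary.
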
 

\begin{proof} The cell structure on the spine of $X_K$ induces a cell structure on $F$. Assign
an Euler measure to each cell giving its contribution to the number of faces, edges, vertices
of this cell decomposition of $F$. In particular assign a measure $1-e_i/4$ to each cell coming from 
$R_i$, where $e_i$
is the number of edges in $R_i$ and where  each component of $R_i \cap \partial X_K$ is counted as
an edge of $R_i$. Assign a measure of $1/2$ to each of the remaining cells, coming from $D^i_{\alpha}, D^i_{\beta}$.
Summing these Euler measures over $F$ gives the Euler characteristic of $F$. 
The Lemma then follows after noting that each cell of $F$ from $R_i$ contributes
$-|R_i \cap \partial X_K|/4$ to $\chi(F)$ and that there are a total of $2 \theta$ 
remaining cells coming from $D^i_{\alpha}$ and $D^i_{\beta}$.  
\end{proof}

\begin{figure}
\includegraphics[width=2in]{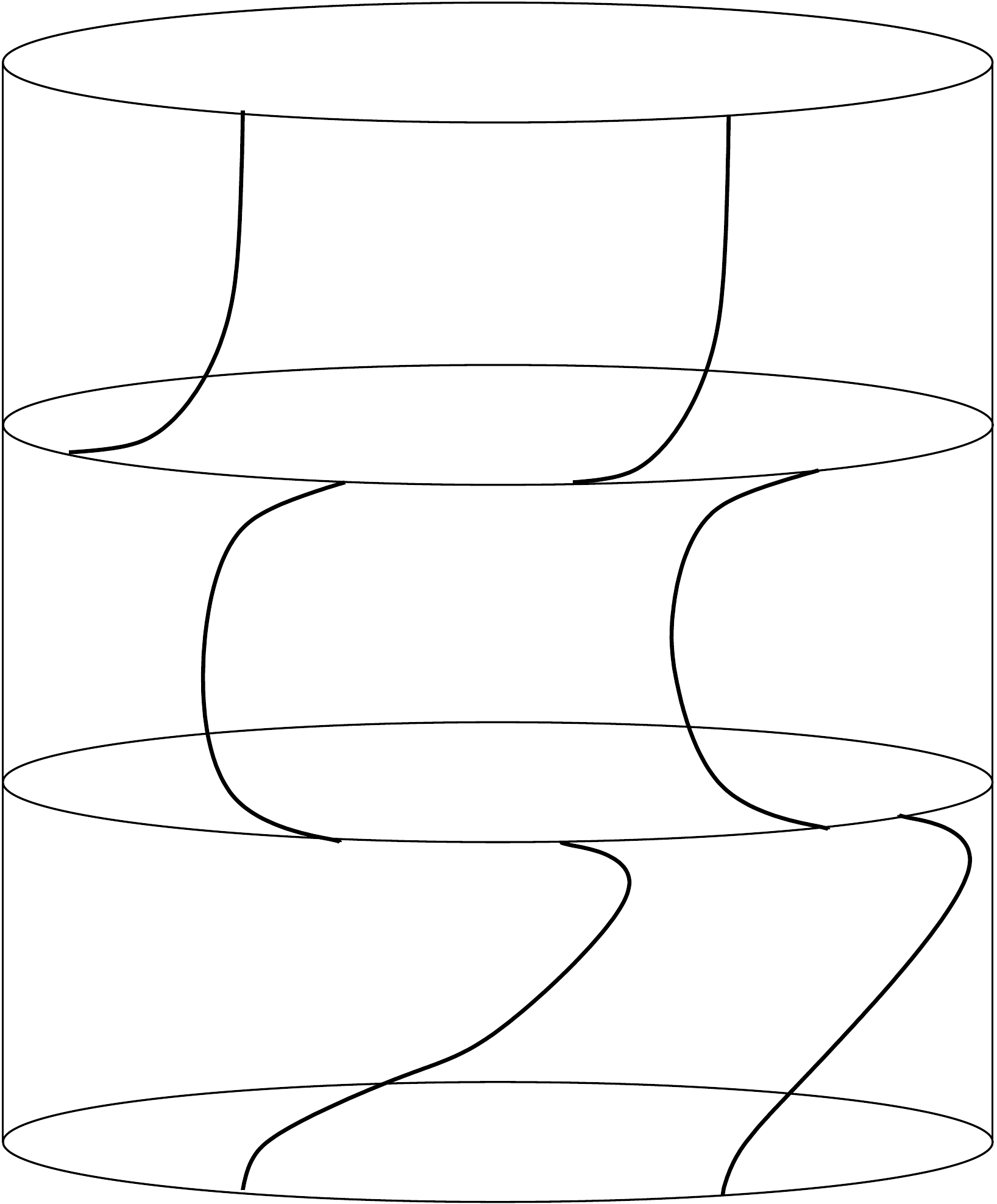}
\put(-100,140){\small $\theta-t'$}
\put(-30,80){\small $\theta-t$}
\put(-30,140){\small $t'$}
\put(-80,110){\small $x+t'$}
\put(-100,80){\small $t $}
\put(-40,110){\small $x$}
\put(-90,58){\small $y+t$}
\put(-120,58){\small $y$}
\put(-30,35){\small $t'$}
\put(-100,35){\small  $\theta-t'$}
\caption{The traintrack $\mathcal{B'} \cap \partial X_K$.}
\label{fig:traintrack}
\end{figure}


\section{Finding minima in $S(p,q,k)$.}
\label{sec:findingmins}
Let $\theta$ be the order of the simple knot $K(p,q,k)$. 
In this Section, we show that if $\theta$ is a harmonic of the pair $(p,q)$, then the sequence $S(p,q,k)$ attains its minimum value more than once. Indeed we show that the minima come in groups. 
As before, write out the steps of the Euclidean algorithm applied to the pair $(p,q)$:

\begin{eqnarray*}
p &=& d_1 q + r_1 \\
q &=& d_2 r_1 + r_2 \\
r_1 &=& d_3 r_2 + r_3 \\
&\vdots& \\
r_{n-2} &=& d_n r_{n-1} + 0.
\end{eqnarray*} 
Recall that $r_1 > \cdots > r_{n-1}=1$ are called the {\bf remainders} 
and $d_1, \dots, d_n$ the {\bf coefficients} associated with $(p,q)$.
Recall also that $S(p,q,k)$ is the sequence of non-negative integers
$g(x)$, $x =0,\dots,p-1$  where $g : \{0,\dots,p-1\} \to \bZ$ of Section~\ref{ss: notation} corresponds to Equation~(\ref{eq: jump function}), and that $g : \bZ \to \bZ$ is the $p$-periodic extension of that function to the integers.
Theorem~\ref{thm:lotsofzerosg} at the end of this section says that each minimum of $S(p,q,k)$ lies in a sequence of $\lceil r_i/ r_{i-1} \rceil$ minima that are $r_i$ steps apart, whenever $\theta$ divides the remainder $r_i$. These sequences of minima will be counted in the proof of Theorem~\ref{thm: minimizer}.
 
The argument in this section is geometric in nature.

\subsection{Continued fractions and matrices associated to $(p,q)$.}

In this subsection we define and identify properties of certain continued fractions and matrices associated to coefficients and remainders of $(p,q)$. 

For a sequence of positive integers $a_1, a_2, \dots, a_n$, we use the standard notation
\[
[a_1,a_2,\dots,a_n] = a_1 + \cfrac{1}{a_2 + \cfrac{1}{\ddots \\ + \cfrac{1}{a_n}}}
\]
for continued fractions.
When $d_1, \dots, d_n$ are the coefficients associated with $(p,q)$, we have that $p/q = [d_1,\dots,d_n]$.
For each $i=1,\dots,n$, we define the reduced fractions $p_i/q_i = [d_1,\dots,d_i]$.
Additionally, we define $q_{-1} = p_0=1$, $p_{-1} = q_0=0$, $r_{-1} = p$, and $r_0 = q$.
These values enjoy various properties:

\begin{lem}
\label{lem: cont fracs}
Let $x$ denote an indeterminate.
For all $i = 0,\dots, n-1$, we have
\begin{enumerate}
\item
$[d_1,\dots,d_i,x] = (p_i x + p_{i-1})/(q_i x + q_{i-1})$,
\item
$p_{i+1} = d_{i+1} p_i + p_{i-1}$,
\item
$q_{i+1} = d_{i+1} q_i + q_{i-1}$, and
\item
$p_{i+1} q_i - p_i q_{i+1} = (-1)^{i+1}$. 
\end{enumerate}
\end{lem}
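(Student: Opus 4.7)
My plan is to prove all four statements simultaneously by induction on $i$, packaging them via the matrix identity
\[
M_1 M_2 \cdots M_i = \begin{pmatrix} p_i & p_{i-1} \\ q_i & q_{i-1} \end{pmatrix}, \qquad M_j := \begin{pmatrix} d_j & 1 \\ 1 & 0 \end{pmatrix}.
\]
Once this identity is in hand, statements (2) and (3) drop out of equating first-column entries in $(M_1 \cdots M_i) M_{i+1} = M_1 \cdots M_{i+1}$, while (4) follows from taking determinants and using $\det M_j = -1$.

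The base case $i = 0$ corresponds to the empty product equalling the identity matrix, which matches the initial values $p_0 = q_{-1} = 1$, $p_{-1} = q_0 = 0$; all four statements at $i=0$ are then immediate verifications.

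For the inductive step, the driving identity is
\[
[d_1, \ldots, d_{i+1}, x] = [d_1, \ldots, d_i,\, d_{i+1} + 1/x].
\]
Applying the inductive form of (1) to the right-hand side and simplifying produces a ratio of linear forms in $x$ with numerator $(d_{i+1}p_i + p_{i-1})x + p_i$ and denominator $(d_{i+1}q_i + q_{i-1})x + q_i$. Letting $x \to \infty$ extracts $[d_1, \ldots, d_{i+1}] = (d_{i+1}p_i + p_{i-1})/(d_{i+1}q_i + q_{i-1})$. To identify this fraction as $p_{i+1}/q_{i+1}$ in reduced form, I compute the determinant of its coefficient matrix and invoke (4) at the preceding index to conclude that the numerator and denominator are coprime. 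With $p_{i+1}$ and $q_{i+1}$ so identified, the displayed ratio of linear forms is exactly (1) at level $i+1$, and (2)--(4) at level $i+1$ follow at once.

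The main obstacle is bookkeeping: the definition of $p_i, q_i$ as the \emph{reduced} numerator and denominator of the truncated continued fraction must be reconciled with what the three-term recurrence produces. Statement (4) at the previous index serves precisely as the coprimality certificate that makes this reconciliation work and keeps the induction self-propagating.
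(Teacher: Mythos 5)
Your proof is correct and follows essentially the same route as the paper's: induction on $i$, using the identity $[d_1,\ldots,d_{i+1},x]=[d_1,\ldots,d_i,d_{i+1}+1/x]$ to propagate (1), then the determinant/coprimality certificate from (4) at the earlier index to identify $p_{i+1}=d_{i+1}p_i+p_{i-1}$ and $q_{i+1}=d_{i+1}q_i+q_{i-1}$ (the paper substitutes $x=d_{i+1}$ where you let $x\to\infty$, but these are interchangeable). The matrix packaging $M_1\cdots M_i=\left(\begin{smallmatrix}p_i & p_{i-1}\\ q_i & q_{i-1}\end{smallmatrix}\right)$ is a pleasant way to organize the claims, though establishing it still requires the same continued-fraction and coprimality reasoning you then carry out; note also a minor off-by-one in your closing sentence, where identifying $p_{i+1},q_{i+1}$ yields (2)--(4) at index $i$ (not $i+1$) in the lemma's numbering.
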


\begin{proof}
The proof is by induction on $i$.
The assertions hold in the base case $i=0$ by direct inspection.
Assume next that $0 < i < n-1$ and all of the assertions hold for the index $i-1$. 
For the induction step, we first apply the identity $[d_1,\dots,d_i,x] = [d_1,\dots,d_{i-1},y]$, where $y = d_i + 1/x$.
By induction (assertion (1)), this equals $(p_{i-1}y + p_{i-2})/(q_{i-1}y + q_{i-2}) = ((d_i p_{i-1} + p_{i-2})x + p_{i-1})/((d_i q_{i-1} + q_{i-2})x + q_{i-1})$.
By induction (assertions (2) and (3)), this simplifies to the right side of assertion (1).
Next, set $x = d_{i+1}$.
We obtain $p_{i+1}/q_{i+1} =  (d_{i+1} p_i + p_{i-1})/(d_{i+1} q_i + q_{i-1})$.
Observe that $(d_{i+1} p_i + p_{i-1})q_i - (d_{i+1} q_i + q_{i-1})p_i = -(p_i q_{i-1} - p_{i-1} q_i) = (-1)^{i+1}$ by induction (assertion (4)).
Consequently, the numerator and denominator in this expression for $p_{i+1}/q_{i+1}$ are coprime, which gives assertions (2) and (3).
Assertion (4) now follows in turn.
This completes the induction step and finishes the proof.
\end{proof}

For each index $i$ for which the respective matrix entries are defined, we define
\[
B_i = \left( \begin{matrix} -d_i& (-1)^{i+1} \\ (-1)^i & 0 \end{matrix} \right), \quad 
C_i = \left( \begin{matrix} (-1)^i p_i & (-1)^{i+1}  q_i \\ -p_{i-1} & q_{i-1} \end{matrix} \right), \quad 
M_i = \left( \begin{matrix} (-1)^i p_i & r_i \\ -p_{i-1} & (-1)^i r_{i-1} \end{matrix} \right).
\]

\begin{lem}
\label{lem: matrix identities}
For all $i$ for which the matrices involved are defined, we have $C_{i+1} = B_{i+1} C_i$ and $M_{i+1} = B_{i+1} M_i$.
\end{lem}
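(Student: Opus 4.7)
The plan is to verify both matrix identities by direct computation, using the recursions for $p_i$ and $q_i$ established in Lemma~\ref{lem: cont fracs} and the Euclidean algorithm recursion $r_{i+1} = r_{i-1} - d_{i+1} r_i$. Since $B_{i+1}$, $C_i$, $C_{i+1}$, $M_i$, and $M_{i+1}$ are all explicitly given $2\times 2$ matrices, each identity amounts to checking four scalar equalities.

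First I would write out $B_{i+1}$ as $\begin{pmatrix} -d_{i+1} & (-1)^i \\ (-1)^{i+1} & 0 \end{pmatrix}$ and then compute $B_{i+1} C_i$ entry by entry. The $(1,1)$ entry equals $(-d_{i+1})\bigl((-1)^i p_i\bigr) + (-1)^i(-p_{i-1}) = (-1)^{i+1}(d_{i+1}p_i + p_{i-1}) = (-1)^{i+1} p_{i+1}$ by assertion (2) of Lemma~\ref{lem: cont fracs}, which matches the $(1,1)$ entry of $C_{i+1}$. Similarly the $(1,2)$ entry simplifies to $(-1)^i(d_{i+1} q_i + q_{i-1}) = (-1)^i q_{i+1}$ using assertion (3). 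The second row of $B_{i+1}$ has a zero, so the $(2,1)$ and $(2,2)$ entries collapse immediately to $-p_i$ and $q_i$, respectively, matching $C_{i+1}$.

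For the second identity, the first column of $M_i$ is identical to the first column of $C_i$, so the $(1,1)$ and $(2,1)$ entries of $B_{i+1} M_i$ are handled by the same calculation above. For the $(1,2)$ entry I compute $(-d_{i+1}) r_i + (-1)^i \cdot (-1)^i r_{i-1} = r_{i-1} - d_{i+1} r_i = r_{i+1}$, invoking the Euclidean step. The $(2,2)$ entry is $(-1)^{i+1} r_i + 0 = (-1)^{i+1} r_i$. These agree with the entries of $M_{i+1}$, completing the verification.

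There is no real obstacle here; the lemma is a bookkeeping result packaging the recursions from Lemma~\ref{lem: cont fracs} and the Euclidean algorithm into matrix form. The only mild source of error is sign-tracking with the alternating factors $(-1)^i$, so I would be careful to use $(-1)^{i+2} = (-1)^i$ and $(-1)^{2i+1} = -1$ consistently when simplifying.
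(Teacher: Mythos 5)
Your proof is correct and follows the same approach as the paper's: entry-by-entry verification of the two matrix products, invoking Lemma~\ref{lem: cont fracs}(2) and (3) for the top-left and top-right entries of the first identity and the Euclidean recursion for the top-right entry of the second. The paper merely states which identities handle which entries without writing out the arithmetic; your version supplies the explicit computations.
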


\begin{proof}
The bottom rows of the two matrix identities are immediate.
The top-left entries of both identities follow from assertion (2) of Lemma \ref{lem: cont fracs}.
The top-right entry of the first identity follows from assertion (3) of Lemma \ref{lem: cont fracs}.
The top-right entry of the second identity follows from the defining equation $r_{i-1} = d_{i+1} r_i + r_{i+1}$ from the Euclidean algorithm.
\end{proof}

\begin{cor}
\label{cor: identities}
For all $i=0,\dots,n-1$, we have \\
\begin{inparaenum}
\item
$\det C_i = 1$,
\item
$M_i = C_i \left( \begin{matrix} 1 & q \\ 0 & p \end{matrix} \right)$,
\item
$(-1)^i r_i = p_i q - p q_i$, and
\item
$p = r_i p_{i-1} + p_i r_{i-1}$.
\end{inparaenum}
\end{cor}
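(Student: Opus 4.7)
The plan is to prove the four assertions in the order listed, leveraging Lemmas \ref{lem: cont fracs} and \ref{lem: matrix identities} together with the base case $i=0$.

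First, for assertion (1), I would verify that $C_0 = \left(\begin{smallmatrix} 1 & 0 \\ 0 & 1 \end{smallmatrix}\right)$ from the definitions $p_{-1}=q_0=0$ and $p_0=q_{-1}=1$, so $\det C_0 = 1$. A direct computation gives $\det B_i = -(-1)^{i+1}(-1)^i = 1$ for all $i$. Then the recursion $C_{i+1} = B_{i+1} C_i$ from Lemma \ref{lem: matrix identities} yields $\det C_{i+1} = \det C_i$, and induction gives $\det C_i = 1$ for all $i$. (Alternatively, one can read off $\det C_i = (-1)^i (p_i q_{i-1} - p_{i-1} q_i) = 1$ directly from Lemma \ref{lem: cont fracs}(4).)

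Next, for assertion (2), I would again use induction on $i$. In the base case $i=0$, the matrix $M_0 = \left(\begin{smallmatrix} 1 & q \\ 0 & p \end{smallmatrix}\right)$ matches $C_0 \cdot \left(\begin{smallmatrix} 1 & q \\ 0 & p \end{smallmatrix}\right) = I \cdot \left(\begin{smallmatrix} 1 & q \\ 0 & p \end{smallmatrix}\right)$, using $r_{-1}=p$ and $r_0=q$. For the induction step, assuming $M_i = C_i \left(\begin{smallmatrix} 1 & q \\ 0 & p \end{smallmatrix}\right)$, I would apply $B_{i+1}$ on the left to both sides and invoke both identities of Lemma \ref{lem: matrix identities} to get $M_{i+1} = B_{i+1} M_i = B_{i+1} C_i \left(\begin{smallmatrix} 1 & q \\ 0 & p \end{smallmatrix}\right) = C_{i+1} \left(\begin{smallmatrix} 1 & q \\ 0 & p \end{smallmatrix}\right)$.

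With assertion (2) in hand, assertions (3) and (4) fall out by reading off entries and determinants, respectively. For (3), I would compare the top-right entries on both sides of the matrix identity (2): the left side is $r_i$, and the right side is $(-1)^i p_i \cdot q + (-1)^{i+1} q_i \cdot p$, so $(-1)^i r_i = p_i q - p q_i$. For (4), I would take determinants on both sides of (2): the right side has determinant $\det C_i \cdot p = p$ by (1), while direct expansion of $\det M_i$ gives $(-1)^i p_i \cdot (-1)^i r_{i-1} - r_i(-p_{i-1}) = p_i r_{i-1} + r_i p_{i-1}$, yielding $p = r_i p_{i-1} + p_i r_{i-1}$.

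No step of this argument looks difficult; the only thing to watch is the sign bookkeeping, both in verifying $\det B_i = 1$ and in reading off the $(1,2)$-entry for (3). The entire corollary is really just an accounting exercise packaging the content of Lemmas \ref{lem: cont fracs} and \ref{lem: matrix identities} into four clean identities.
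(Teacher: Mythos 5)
Your proof is correct and follows essentially the same route as the paper: induction via $M_0 = \left(\begin{smallmatrix} 1 & q \\ 0 & p \end{smallmatrix}\right)$ and the recursions $C_{i+1}=B_{i+1}C_i$, $M_{i+1}=B_{i+1}M_i$ from Lemma~\ref{lem: matrix identities} for (1) and (2), then reading off the top-right entry and the determinant of the identity (2) for (3) and (4). The only difference is that you spell out details the paper leaves implicit, such as $\det B_i = 1$ and the explicit entry and determinant computations.
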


\begin{proof}
Assertions (1) and (2) follow by induction on $i$, using Lemma \ref{lem: matrix identities} and the fact that $M_0 = \left( \begin{matrix} 1 & q \\ 0 & p \end{matrix} \right)$.
Assertion (3) follows from the top-right entry of the matrix identity in (2).
Lastly, $\det M_i = p$ follows from (1) and (2), and expanding the determinant results in (4).
\end{proof}

\subsection{Nice coordinates and a pattern in the minima of $S(p,q,k)$.}

Recall the description from Section~\ref{ss: domains} of a rational Seifert surface for $K=K(p,q,k)$ as a domain, $D$, in a Heegaard torus, $T$, of $L(p,q)$,
where $c_i$ denotes the coefficient (weight) of the region $R_i$ in $D$. 
As stated in Equation~(\ref{eq: gandc}) in that section, these weights as one travels along $\alpha$ in the Heegaard torus of $L(p,q)$ starting at $R_0$ are given by the sequence $S(p,q,k)$. This creates a geometric, two-dimensional point of view of $S(p,q,k)$. When the order $\theta$ of $K$ divides a remainder, $r_i$, of $(p,q)$, the matrix $C_i$ defined in the preceding section provides nice coordinates on $T$ revealing sequences of minima of $S(p,q,k)$ that are $r_i$ steps apart. These sequences, stated in terms of the corresponding $p$-periodic function $g$ on the integers, are captured in Theorem~\ref{thm:lotsofzerosg}, whose proof is the object of this subsection.

Assume that $\theta > 1$ is a harmonic of the pair $(p,q)$ and $1 \le i \le n-1$ is an index such that $\theta \, | \, r_i$.
Adapt coordinates on the Heegaard torus, $T$, of $L(p,q) = V_\alpha \cup_T V_\beta$ to this remainder as follows.
As in Section~\ref{ss: domains}, write $T$ as the torus $\bR^2 / \bZ^2$, endowed with the Euclidean metric.
Since $\det C_i = 1$, it descends to an orientation preserving diffeomorphism 
$\overline{C}_i : T \to T$.
By Corollary \ref{cor: identities}(2), this map takes the geodesics $\alpha$ and $\beta$, originally with slopes
 $0/1$ and $p/q$, to geodesics 
 with slopes $(-1)^{i+1} p_{i-1}/p_i$ and $(-1)^i r_{i-1}/r_i$, respectively.
 Specifically,
let $\alpha$ denote a geodesic curve of slope $s_1 = (-1)^{i+1}p_{i-1}/p_i$ and $\beta$ a geodesic curve of slope $s_2=(-1)^i r_{i-1}/r_i $ on $T$, chosen so that the intersection point $x_0 \in \alpha \cap \beta$ is the image of $(0,0)$ on $T$.
We picture $T$ using the fundamental domain $[0,1] \times [0,1]$, with $V_\alpha$ sitting below the plane of the picture and $V_\beta$ above.
Since $\theta$ divides both $r_i$ and $p$, and since $p= r_i p_{i-1} + p_i r_{i-1}$ (Corollary \ref{cor: identities}(4)) and $\gcd(r_{i-1},r_i)=1$ (from the Euclidean algorithm and the fact that $p,q$ are coprime), it follows that $\theta$ divides $p_i$. As $\gcd(r_{i-1},r_i)=1$ and $\gcd(p_{i-1},p_i)=1$ (Lemma~\ref{lem: cont fracs}(4)), 
$s_1,s_2$ both have denominators divisible by $\theta$.
Let $\gamma$ denote the vertical curve through $x_0$.
The triple intersection $\alpha \cap \beta \cap \gamma$ therefore contains $\gcd(r_i,p_i) = d \theta$ points, for some positive integer $d$.
These points are equally spaced along $\alpha$, so they are the points $x_i$ whose index $i$ is a multiple of $p / d \theta$.
In particular, $x_k \in \alpha \cap \beta \cap \gamma$.
(Note that $k \pmod p$ has order $\theta$.)

\noindent
{\em Example.}
Running the Euclidean algorithm on the pair $(p,q) = (15,4)$ yields the sequence of coefficients $(d_1,d_2,d_3) = (3,1,3)$ and remainders $(r_1,r_2) = (3,1)$.
We calculate $p_1/q_1 = [3] = 3/1$, $p_2/q_2 = [3,1] = 4/1$, and $p_3/q_3 = [3,1,3] = 15/4$, from which we obtain the matrices
$C_1 = \left( \begin{matrix} -p_1 & q_1 \\ -p_0 & q_0 \end{matrix} \right) = \left( \begin{matrix} -3 & 1 \\ -1 & 0 \end{matrix} \right)$ and 
$M_1 = \left( \begin{matrix} -p_1 & r_1 \\ -p_0 & - r_0 \end{matrix} \right) = \left( \begin{matrix} -3 & 3 \\ -1 & -4 \end{matrix} \right)$.
In the coordinates on $T$ given by the matrix $C_1$, the curves $\alpha$ and $\beta$ have slopes $p_0/p_1 = 1/3$ and $-r_0 / r_1 = -4/3$, respectively. $\alpha$ is oriented in the direction $[-3,-1]$, $\beta$ in the direction $[3,-4]$. See Figure \ref{fig: coords}.
The knot $K(15,4,5)$ has order $\theta=3$ which divides $r_1$. We see that $x_5$ lies on the vertical axis. 

In general, within the fundamental domain $[0,1] \times [0,1]$, we see one of the following two pictures under the transformation by $\overline{C}_i$, depending on the parity of $i$:

(1) {\em If $i$ is odd,} $\alpha_1$ is the part of $\alpha$ incident to the bottom left corner, $\alpha$ is oriented towards that corner, and $\alpha_2$ is incident to the top right corner; similarly, $\beta_1$ is the part of $\beta$ leaving the top left corner, $\beta$ is oriented away from that corner, and $\beta_2$  is incident to the bottom right corner.
See Figures~\ref{fig: coords} and \ref{fig:deltaiodd} .
Since $\alpha$ has slope $0 < s_1 < 1$ and $\beta$ has slope $s_2 < -1$, there exists a triangle $\Delta_j \subset T$ with one side determined by $\gamma$ and the other two by segments of $\alpha_j$ and $\beta_j$, $j=1,2$. 
Both are indicated in the Figures.
Note that $\alpha_j \cap \Delta_j$ intersects $\beta$ in $r_i+1$ points (including $x_0$).
The triangle $\Delta_j$ is embedded away from a duplicated vertex at $x_0$.
Its other vertex is $x_{(-1)^j r_i}$, where the subscript is taken mod $p$.
The condition that $s_2 < -1$ ensures that $\Delta_1 \cap \Delta_2 = \gamma$.
It follows that $P = \Delta_1 \cup \Delta_2$ is an embedded parallelogram away from the duplicated vertex at $x_0$; in particular, it has three distinct vertices $x_{-r_i}, x_0, x_{r_i}$.

\begin{figure}
\includegraphics[width=2in]{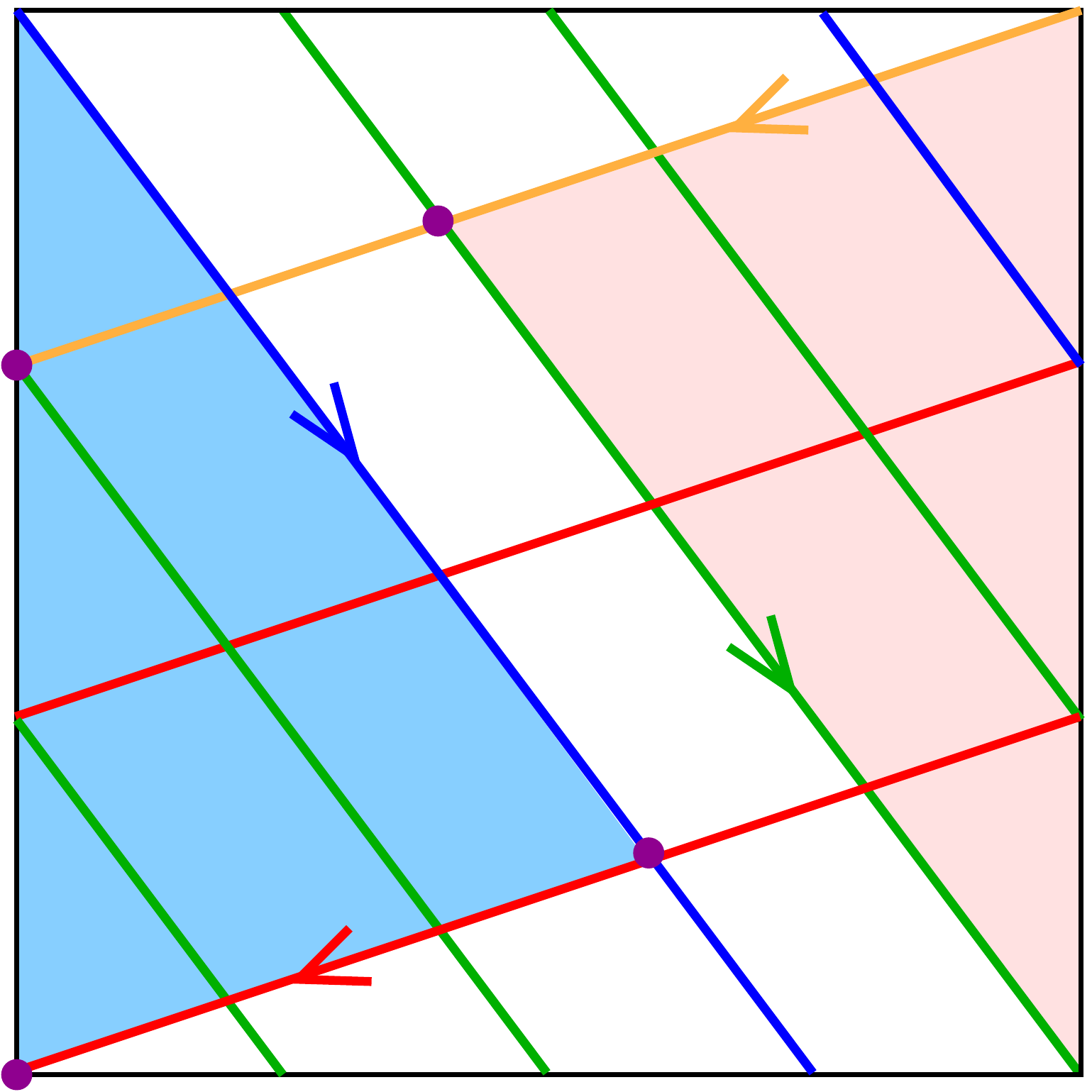}
\put(-160,0){$x_0$}
\put(-160,90){$x_5$}
\put(-92,105){$x_3$}
\put(-50,25){$x_{-3}$}
\put(-65,98){$\Delta_2$}
\put(-93,40){$\Delta_1$}
\caption{Heegaard diagram of the lens space $L(15,4)$ coming from $C_1$.
It has attaching curves $\alpha = \alpha_1 \cup \alpha_2$ of slope $1/3$ and $\beta = \beta_1 \cup \beta_2$ of slope $-4/3$.
The Heegaard torus meets the simple knot $K(15,4,5)$ in points $x_0, x_5 \in \alpha \cap \beta \cap \gamma$, where $\gamma$ has slope $1/0$.
The triangles $\Delta_1$ and $\Delta_2$ are indicated as well.
}
\label{fig: coords}
\end{figure}

\begin{figure}
\includegraphics[width=2in]{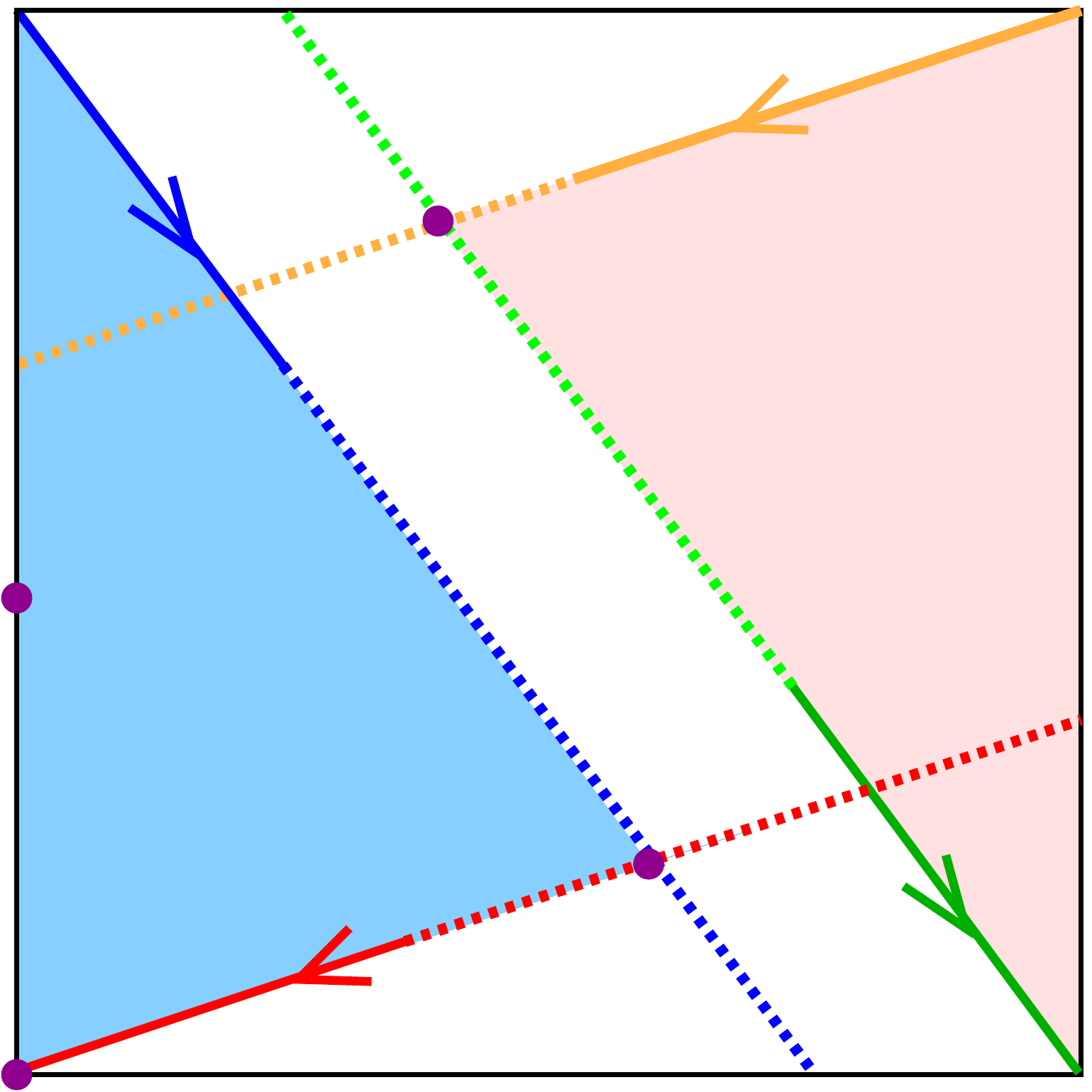}
\put(-20,130){$\alpha_2$}
\put(-130,130){$\beta_1$}
\put(-13,18){$\beta_2$}
\put(-130,14){$\alpha_1$}
\put(-92,105){$x_{r_i}$}
\put(-50,25){$x_{-r_i}$}
\put(-160,0){$x_0$}
\put(-160,63){$x_k$}
\put(-40,80){$\Delta_2$}
\put(-120,60){$\Delta_1$}
\caption{The curves $\alpha_1, \alpha_2,\beta_1,\beta_2$ in the coordinates coming from $C_i$ when $i$ is odd, along with the triangles $\Delta_1,\Delta_2$.
}
\label{fig:deltaiodd}
\end{figure}

\begin{figure}
\includegraphics[width=2in]{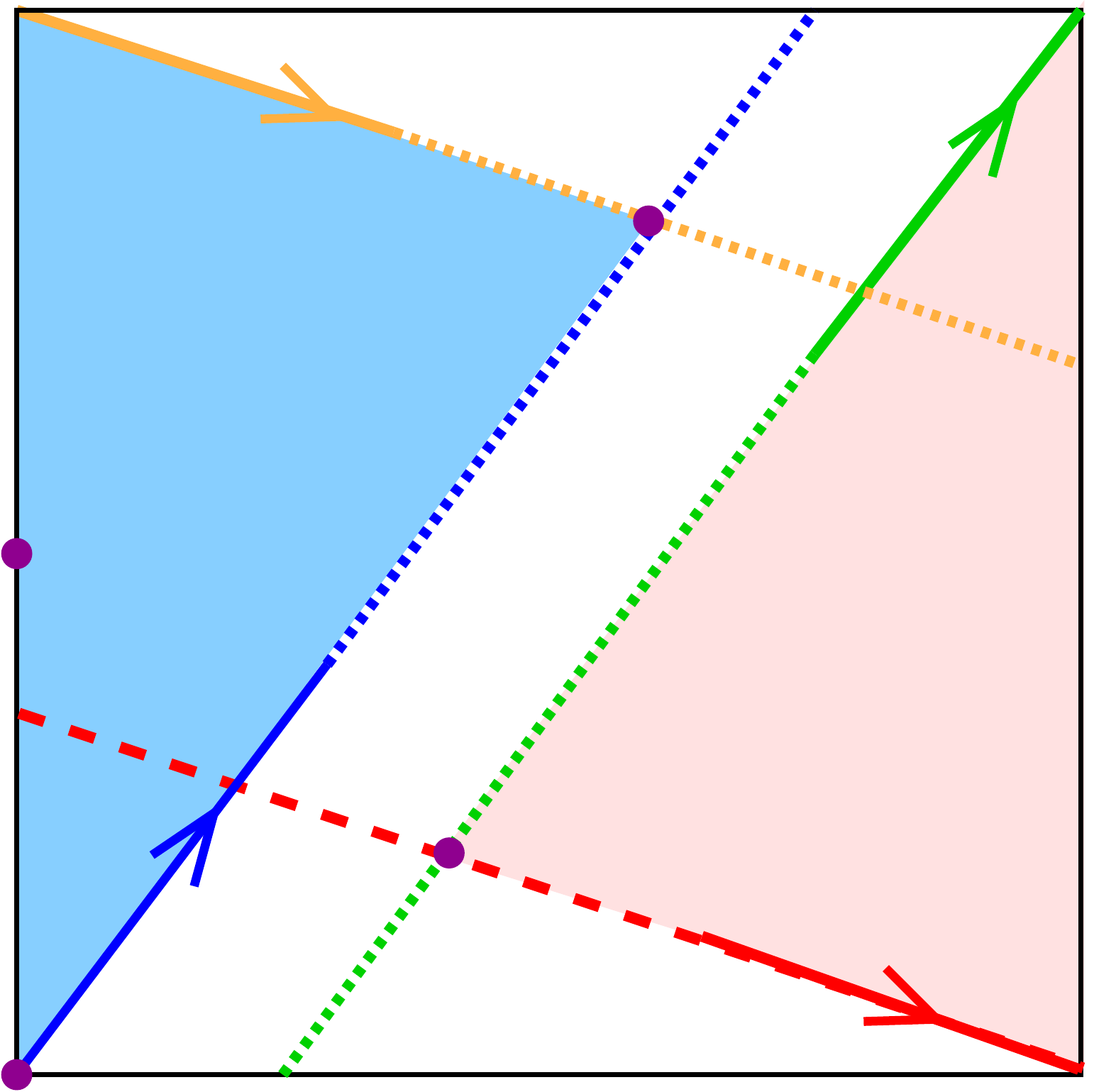}
\put(-22,134){$\beta_2$}
\put(-95,132){$\alpha_2$}
\put(-18,17){$\alpha_1$}
\put(-125,17){$\beta_1$}
\put(-76,112){$x_{r_i}$}
\put(-77,32){$x_{-r_i}$}
\put(-160,0){$x_0$}
\put(-160,63){$x_k$}
\put(-25,60){$\Delta_2$}
\put(-120,80){$\Delta_1$}
\caption{The curves $\alpha_1, \alpha_2,\beta_1,\beta_2$ in the coordinates coming from $C_i$ when $i$ is even, along with the triangles $\Delta_1,\Delta_2$. 
}
\label{fig:deltaieven}
\end{figure}

\begin{figure}
\includegraphics[width=2in]{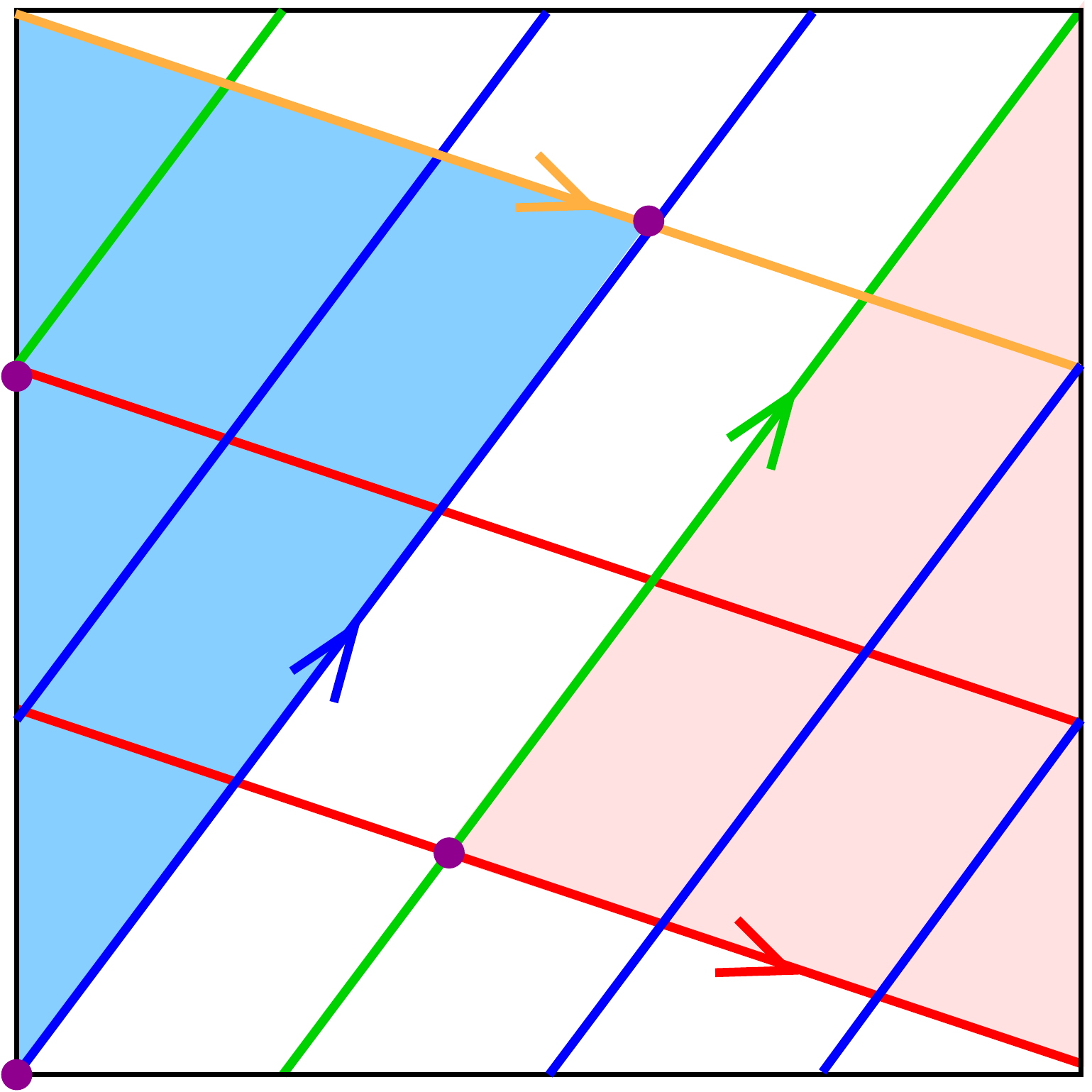}
\put(-160,0){$x_0$}
\put(-160,90){$x_5$}
\put(-76,110){$x_3$}
\put(-77,32){$x_{-3}$}
\put(-18,62){$\Delta_2$}
\put(-118,65){$\Delta_1$}
\caption{Heegaard diagram of the lens space $L(15,11)$ coming from $C_2$.
It has attaching curves $\alpha = \alpha_1 \cup \alpha_2$ of slope $-1/3$ and $\beta = \beta_1 \cup \beta_2$ of slope $4/3$.
The Heegaard torus meets the simple knot $K(15,11,5)$ in points $x_0, x_5 \in \alpha \cap \beta \cap \gamma$, where $\gamma$ has slope $1/0$.
The triangles $\Delta_1$ and $\Delta_2$ are indicated, as well.
}
\label{fig:coords15-11}
\end{figure}

(2) {\em If $i$ is even,} $\beta_1$ is the part of $\beta$ leaving the origin from the bottom left corner, $\beta$ is oriented away from that corner, and $\beta_2$ is incident to the top right corner; similarly, $\alpha_2$ is the part of $\alpha$ leaving the top left corner, $\alpha$ is oriented away from that corner, and $\alpha_1$ is incident to the bottom right corner. 
Since $\alpha$ has slope $-1 < s_1 < 0$ and $\beta$ has slope $s_2 > 1$, there exists a triangle $\Delta_j \subset T$ with one side determined by $\gamma$ and the other two by segments of $\alpha_l$ and $\beta_j$, $\{j,l\}=\{1,2\}$. 
See Figure~\ref{fig:deltaieven} and Figure~\ref{fig:coords15-11}.  
Note that $\alpha_l \cap \Delta_j$, $\{j,l\}=\{1,2\}$, intersects $\beta$ in $r_i+1$ points (including $x_0$).
The triangle $\Delta_j$ is embedded away from a duplicated vertex at $x_0$.
Its other vertex is $x_{(-1)^{j+1}r_i}$.
The condition on the slopes ensures that $\Delta_1 \cap \Delta_2 = \gamma$.
It follows that $P = \Delta_1 \cup \Delta_2$ is an embedded parallelogram away from the duplicated vertex at $x_0$; in particular, it has three distinct vertices $x_{-r_i}, x_0, x_{r_i}$.

Let $R_m$ be a region of $T$ with coefficient 0 in the domain $D$ of $K$.
We find another region with coefficient 0, as follows.
Translate $P=\Delta_1 \cup \Delta_2$ to a parallelogram $P' = \Delta_1' \cup \Delta_2'$ whose duplicated vertex occurs in the interior of $R_m$ and does
not lie on $\gamma$.
Its other vertices occur in the interiors of $R_{m-r_i}$ and $R_{m+r_i}$
The regions $R_{m-r_i}$, $R_m$, $R_{m+r_i}$ are distinct.
Recall that $c_{m \pm r_i}$ denotes the coefficient on $R_{m \pm r_i}$ for the domain $D$.

\begin{prop}
\label{prop: zero coeff}
If $i$ is odd, then $c_{m-r_i}$ vanishes unless the interior of $\Delta_1'$ contains $x_0$, and $c_{m+r_i}$ vanishes unless  the interior of $\Delta_2'$ contains $x_0$.
If $i$ is even, then $c_{m+r_i}$ vanishes unless the interior of $\Delta_1'$ contains $x_k$, and $c_{m-r_i}$ vanishes unless the interior of $\Delta_2'$ contains $x_k$.

\end{prop}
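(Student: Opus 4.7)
My approach is to compute the jump $c_{m+r_i}-c_m$ directly by tracing a path inside the translated triangle $\Delta_2'$ from $v+x_0 \in R_m$ to $v+x_{r_i} \in R_{m+r_i}$ and counting its signed intersections with the 1-chain $\partial D = t'\alpha_1 - (\theta-t')\alpha_2 + (\theta-t)\beta_1 - t\beta_2$. I would focus on the odd-$i$ case and the claim about $c_{m+r_i}$; the remaining three cases follow by a parallel argument. The even-$i$ case is obtained because the configuration of $\alpha_j,\beta_j$ on the sides of $\Delta_1,\Delta_2$ flips with parity (compare Figures~\ref{fig:deltaiodd} and~\ref{fig:deltaieven}), and this flip also swaps which of $x_0, x_k$ appears as the nearby vertex on the $\gamma$-side.

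For the path, I would hug the translated $\alpha_2$-side of $\Delta_2'$ from just inside $\Delta_2'$. Being parallel to $\alpha$, this path crosses no $\alpha$-arcs; it crosses $\beta$-arcs in a sequence corresponding (via the translation by $v$) to the $r_i$ interior intersections of $\alpha_2\cap\Delta_2$ with $\beta$. Lifting to the universal cover, these crossings are with consecutive parallel lifts of $\beta$, so in the torus the crossed arcs form a consecutive run along the cyclic curve $\beta$.

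The combinatorial heart of the argument is that the decomposition $\beta = \beta_1 \cup \beta_2$ has boundary the two points $x_0, x_k$, so a consecutive run of $\beta$-arcs has uniform type unless it spans $x_0$ or $x_k$. Geometrically, the run spans $x_0$ precisely when $x_0 \in \text{int}(\Delta_2')$, because $x_0$ is at a vertex of the untranslated $\Delta_2$ on the $\gamma$-side; the other candidate $x_k$, also lying on $\gamma$ in the $C_i$-coordinates, is separated from $\Delta_2$ by the slope configuration of Figure~\ref{fig:deltaiodd}, so it cannot be encompassed by $\Delta_2'$ before $x_0$ is. When $x_0 \notin \text{int}(\Delta_2')$, all crossings are of uniform $\beta_2$-type, matching the reference type of the $\beta_2$-side of $\Delta_2$ itself. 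Plugging into $\partial D$ and pairing with the analogous contribution from the $\beta_2$-side of $\partial \Delta_2'$ via the vanishing $\partial D \cdot [\partial \Delta_2'] = 0$ (since $\partial \Delta_2'$ bounds the embedded disk $\Delta_2'$ in $T$) yields $c_{m+r_i}-c_m=0$, hence $c_{m+r_i}=0$ since $c_m=0$.

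The main technical obstacle is the rigorous identification of which of $x_0, x_k$ is the relevant obstruction for each of the four cases, together with careful bookkeeping of signs from the orientations of $\alpha_j, \beta_j$. I would resolve this by working in the universal cover, tracking the positions of the lifts of $x_0, x_k$ relative to $\Delta_j'$ using Corollary~\ref{cor: identities} to pin down the coordinate expressions, and reading off the orientation data from Figures~\ref{fig:deltaiodd} and~\ref{fig:deltaieven}. The signed contributions then fall out from the formula for $\partial D$ and the null-homology balance of $\partial \Delta_j'$.
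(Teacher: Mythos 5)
Your approach is related in spirit to the paper's, which computes linking numbers of $K$ with $\gamma'$ and $\gamma_j$ by intersecting with the 2-chain $C$ (whose restriction to the Heegaard torus is $D$). However, there is a genuine gap: your claim that $x_k$ ``is separated from $\Delta_2$ by the slope configuration \dots\ so it cannot be encompassed by $\Delta_2'$ before $x_0$ is'' is false. The configuration $x_k\in\mathrm{int}(\Delta_j')$ with $x_0\notin\mathrm{int}(\Delta_j')$ does occur geometrically --- it is precisely what the paper's Figure~\ref{fig:linkingiodd} depicts, and the paper devotes half of the proof of the Proposition to it. In that case the $\beta$-run your path crosses is not of uniform type, and the intersection count does not vanish. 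What rescues the Proposition is the nonnegativity of the domain coefficients: the paper's comparison of linking numbers (Lemmas~\ref{lem: linking 1} and~\ref{lem: linking 2}) shows that this case forces $c_{m\pm r_i}=-\theta<0$, which is impossible. Your argument must acknowledge this case exists and reach that contradiction; it cannot be excluded on geometric grounds.

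A secondary concern is the proposed ``balance'' $\partial D\cdot[\partial\Delta_2']=0$: this identity is tautologically satisfied and gives no information, since the $\gamma'$-side of $\partial\Delta_2'$ is a closed loop returning to $R_m$ and so contributes zero on its own, while the $\alpha_2$- and $\beta_2$-side contributions are automatically $\pm(c_{m+r_i}-c_m)$ with opposite signs and cancel. The real content in the paper comes from comparing two distinct closed curves --- $\gamma'$, whose linking number with $K$ is shown to be $0$ in Lemma~\ref{lem: linking 1} using the pieces $D^j_\beta$ of the 2-chain $C$ inside $V_\beta$, and $\gamma_j$, whose linking number is expressed in terms of $c_{m\pm r_i}$ in Lemma~\ref{lem: linking 2} --- together with the observation that $\gamma_j$ is isotopic to $-\gamma'$ in $T\smallsetminus K$ exactly when neither $x_0$ nor $x_k$ lies inside the translated triangle. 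Your write-up would need an analogous non-trivial step to force the count to vanish.
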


The proof of Proposition~\ref{prop: zero coeff} relies on two lemmas concerning linking numbers.
Let $\gamma'$ denote the translate of $\gamma$ contained in $P'$. 
It is a vertical geodesic curve on $T$ disjoint from $x_0$ and $x_k$, so it describes a knot in $L(p,q)$ disjoint from $K$.
Orient $\gamma'$ in the direction of increasing height.
Recall that the linking number $lk(K,\gamma')$ between $K$ and $\gamma'$ can be calculated as $1/n$ times the intersection number between $\gamma'$ and a 2-chain with boundary $n \cdot K$, for any $n \in \bZ$, $n \ne 0$.

\begin{lem}
\label{lem: linking 1}
The linking number between $K$ and $\gamma'$ equals zero.
\end{lem}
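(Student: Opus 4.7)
The plan is to compute $lk(K,\gamma')$ directly from the 2-chain $C$ of Section~\ref{ss: domains}, using that $lk(K,\gamma') = (C \cdot \tilde{\gamma}')/\theta$ for any perturbation $\tilde{\gamma}'$ of $\gamma'$ transverse to $C$. Since $\partial C = \theta \cdot K$, it suffices to show $C \cdot \tilde{\gamma}' = 0$.

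First, I would push $\gamma'$ slightly into $V_\beta$, forming a transverse perturbation $\tilde{\gamma}'$. Then $\tilde{\gamma}'$ is disjoint from $T$ and from the disks $D_\alpha^1, D_\alpha^2 \subset V_\alpha$, so only the terms $(\theta-t)D_\beta^1$ and $tD_\beta^2$ of $C$ contribute. A local model near a point $y \in \gamma' \cap \beta_j$ identifies the intersection of $\tilde{\gamma}'$ with $D_\beta^j$ (occurring just above $y$ in $V_\beta$) with the crossing of $\gamma'$ and $\beta_j$ at $y$ on $T$. Tracking signs, and accounting for the fact that $D_\beta^1$ carries the opposite orientation to $D_\beta$ while $D_\beta^2$ carries the matching orientation, should yield
\[
C \cdot \tilde{\gamma}' \;=\; -(\theta-t)(\gamma' \cdot \beta_1) + t(\gamma' \cdot \beta_2).
\]

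Next, I would evaluate $\gamma' \cdot \beta_j$ using the flat structure on $T$. Since $\gamma'$ and $\beta$ are oriented straight geodesics with constant tangent directions, all $r_i$ points of $\gamma' \cap \beta$ share the same local sign and are equally spaced along $\beta$. The arc $\beta_1$ occupies the fraction $l/p$ of $\beta$, as $\alpha$ cuts $\beta$ into $p$ equal sub-arcs and $\beta_1$ contains exactly $l$ of them. Because $\gamma'$ avoids the endpoints $x_0$ and $x_k$ of $\beta_1$, the uniformly spaced points of $\gamma' \cap \beta$ lying in $\beta_1$ number exactly $r_i l/p$---an integer, since $\theta \mid r_i$ and $t = l\theta/p$ give $r_i l/p = r_i t/\theta \in \bZ$. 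Hence $\gamma' \cdot \beta_1 = (l/p)(\gamma' \cdot \beta)$ and $\gamma' \cdot \beta_2 = ((p-l)/p)(\gamma' \cdot \beta)$.

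Substituting into the displayed formula and using $tp = \theta l$ collapses the expression to $0$, so $lk(K,\gamma') = 0$. The delicate point is the sign tracking in the local model, which I would verify with an explicit chart near a generic point of $\gamma' \cap \beta$ using $T = \{z=0\}$, $V_\beta = \{z>0\}$, $\beta$ along the $x$-axis, and $D_\beta$ the half-plane $\{y=0,\,z\ge 0\}$.
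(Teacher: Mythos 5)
Your proof is correct and follows essentially the same route as the paper: push $\gamma'$ into $V_\beta$, note that only the $D_\beta^1$ and $D_\beta^2$ terms of $C$ contribute, use the opposite orientations of $D_\beta^1$ and $D_\beta^2$ to get the relative sign, and exploit the flat metric to count intersections with $\beta_1$ and $\beta_2$ proportionally, after which $tp=\theta l$ kills the sum. The one small difference is that the paper first isotopes $\gamma'$ to a copy $\gamma''$ of $\gamma$ displaced slightly to one side (so that the count against $\beta_1$ is visibly $(l/p)r_i$ because $\beta_1$'s endpoints lie on $\gamma$), whereas you work with $\gamma'$ directly and justify the same count via the integrality of $(l/p)r_i$ — both are valid, and your sign-tracking plan matches the paper's explanation that $\beta_1 = -\partial D_\beta^1$ while $\beta_2 = \partial D_\beta^2$.
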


\begin{proof}
The curve $\gamma'$ is isotopic in the complement of $K$ to a copy $\gamma''$ of $\gamma$ displaced just to one side of $\gamma$, so $lk(K,\gamma') = lk(K,\gamma'')$.
The curve $\gamma''$ intersects $\beta$ in $r_i$ points, all with the same sign, since $\gamma''$ has slope $1/0$, $\beta$ has slope of denominator $r_i$, and both are geodesics.
(The precise sign will depend on the parity of $i$, but it will not matter.)
Recall the definition of $l$ and $t$ of \eqref{eq: l theta t} of Section~\ref{ss: notation}. As we are dealing with Euclidean geodesics, the points of $\gamma \cap \beta$ are equally spaced along $\beta$. The subarc $\beta_1$ has its endpoints $x_0,x_k$ on $\gamma$.  
Furthermore, $\beta_1$ has length $l/p = t / \theta$ times that of $\beta$ ($\beta_1$ is cut into $l = [q' k]_p$ subarcs and $\beta$ into $p$ subarcs by $\alpha$). Similarly, the arc $\beta_2$ has its endpoints on $\gamma$ and has length $(\theta-t)/\theta$ times that of $\beta$.
It follows that $\gamma''$ meets $\beta_1$ in $(t/ \theta) r_i$ points and $\beta_2$ in $((\theta - t)/ \theta) r_i$ points of intersection, all with the same sign.
Isotope $\gamma''$ slightly into $V_\beta$.
Its intersection with the 2-chain $C$ of \eqref{eq: 2-chain} in Section~\ref{ss: domains} comes from 
$(t/ \theta) r_i$ points of intersection with $D^1_\beta$, all of one sign, and $((\theta - t)/ \theta) r_i$ points of intersection with $D^2_\beta$, all of the opposite sign: the reason the two signs are opposite is that $\beta_1$ is oriented as $- \del D^1_\beta$, while $\beta_2$ is oriented as $\del D^2_\beta$.
As the weight of $D^1_\beta$ in $C$ is $(\theta-t)$ and of $D^2_\beta$ is $t$, the intersection number of $\gamma''$ and $C$ is $\pm (-(t/ \theta) r_i \cdot (\theta - t) + ((\theta-t)/ \theta) r_i  \cdot t) = 0$.
As $\del C = \theta \cdot K$, it follows that $lk(\gamma',K) = lk(\gamma'',K) = (1/\theta) \cdot 0  = 0$, as claimed.
\end{proof}

Let $\gamma_j$ denote the union of the non-vertical sides of $\Delta_j'$.
It is a simple closed curve on $T$ in the complement of $\alpha \cap \beta$.
Orient $\gamma_j$ in the direction of decreasing   
height. 

\begin{lem}
\label{lem: linking 2}
The linking number between $K$ and $\gamma_j$ equals $(1/\theta)(-1)^{i+j-1} c_{m+(-1)^{i+j-1} r_i}$ for $j \in \{1,2\}$.

\end{lem}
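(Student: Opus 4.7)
The idea is to compute $\theta \cdot lk(K, \gamma_j) = C \cdot \gamma_j^+$, where $C$ is the 2-chain from Section~\ref{ss: domains} satisfying $\partial C = \theta \cdot K$ and $\gamma_j^+$ is a slight pushoff of $\gamma_j$ off $T$ into the interior of $V_\beta$ (the co-oriented side). Since $\gamma_j^+$ is then disjoint from $T$, from the domain $D \subset T$, and from the disks $D_\alpha^1, D_\alpha^2 \subset V_\alpha$, only the sectors $D_\beta^1$ and $D_\beta^2$ can contribute, yielding
\[
C \cdot \gamma_j^+ \;=\; (\theta - t)\bigl(\gamma_j^+ \cdot D_\beta^1\bigr) \;+\; t\bigl(\gamma_j^+ \cdot D_\beta^2\bigr).
\]

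I would next reduce each count to a crossing count on $T$. The $\beta$-slope arc of $\gamma_j$ is parallel to $\beta$ and contributes nothing, so all crossings come from the $\alpha$-slope arc $\gamma_j^\alpha$. Inspecting Figures~\ref{fig:deltaiodd} and \ref{fig:deltaieven}, $\gamma_j^\alpha$ runs between the translated duplicated vertex in $R_m$ and the other vertex $A_3'$ in $R_{m + \mu r_i}$, where $\mu := (-1)^{i+j-1}$, and therefore traverses $r_i$ consecutive regions, crossing $r_i$ arcs of $\beta$ in total. Let $a_1$ denote the number of those crossings lying on $\beta_1$ and $a_2 := r_i - a_1$ the number on $\beta_2$. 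Exactly as in the proof of Lemma~\ref{lem: linking 1}, and because $\beta_1$ is oriented as $-\partial D_\beta^1$ while $\beta_2$ is oriented as $\partial D_\beta^2$, the two contributions come in with opposite signs under the pushoff; the displayed quantity therefore reduces to an overall sign times $(\theta - t) a_1 - t a_2 = \theta a_1 - t r_i$.

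To tie this to the sequence $S(p,q,k)$, I would telescope the defining recursion \eqref{eq: jump function} across the $r_i$ regions traversed by $\gamma_j^\alpha$, obtaining
\[
c_{m + \mu r_i} - c_m \;=\; \mu\bigl(t\, r_i - \theta\, a_1\bigr).
\]
The hypothesis $c_m = 0$ then rearranges to $\theta\, a_1 - t\, r_i = -\mu\, c_{m + \mu r_i}$, and pinning down the overall pushoff sign by calibration with the computation in Lemma~\ref{lem: linking 1} produces
\[
C \cdot \gamma_j^+ \;=\; \mu\, c_{m + \mu r_i} \;=\; (-1)^{i+j-1}\, c_{m + (-1)^{i+j-1} r_i}.
\]
Dividing by $\theta$ yields the lemma.

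The main obstacle is the careful bookkeeping of signs across the four cases ($i$ odd or even, $j \in \{1, 2\}$): one must confirm from the corresponding figure that $\gamma_j^\alpha$ runs in a consistent direction relative to $\alpha$, that its terminal region is $R_{m + \mu r_i}$ for the claimed $\mu = (-1)^{i+j-1}$, and that the overall sign from the pushoff agrees with the convention implicitly fixed by Lemma~\ref{lem: linking 1}. Once these sign conventions are aligned, the rest is a direct telescoping of the recursion defining the coefficients $c_i$.
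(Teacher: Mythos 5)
Your route is genuinely different from the paper's, and it is sound up to the final sign check. The paper isotopes $\gamma_j$ by pushing its $s_1$-side into $V_\alpha$ and its $s_2$-side into $V_\beta$; since the $s_1$-side is parallel to $\alpha=\partial D_\alpha$ and the $s_2$-side is parallel to $\beta=\partial D_\beta$, the isotoped curve then meets the spine of $X_K$ transversely in exactly two points --- the two vertices of $\gamma_j$, one in $R_m$ and one in $R_{m+(-1)^{i+j-1}r_i}$ --- and with $c_m=0$ only the second contributes, with sign read off from the figure. You instead push the whole of $\gamma_j$ into $V_\beta$, which produces $r_i$ transverse intersections with $D_\beta^1 \cup D_\beta^2$, and then you convert the resulting weighted count $\pm\bigl((\theta-t)a_1-t\,a_2\bigr)=\pm(\theta a_1-t\,r_i)$ into $\mp\mu\, c_{m+\mu r_i}$ by telescoping the jump recursion \eqref{eq: jump function} from $R_m$ to $R_{m+\mu r_i}$. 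Both routes are correct in substance; the paper's is cleaner in that it avoids the telescoping entirely by arranging for only two crossings with the spine, while yours makes the telescoping geometrically visible as the $r_i$ crossings with $D_\beta$.

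The one real caveat is the sign calibration. You assert the overall sign can be pinned down by matching the computation of Lemma~\ref{lem: linking 1}, but that lemma's intersection count evaluates to $0$ exactly (the $\pm$ is never resolved there because both answers are the same), so it carries no sign information; moreover, the arc there is vertical and oriented by increasing height, whereas your $s_1$-slope arc is part of $\gamma_j$ oriented by decreasing height, so the crossing signs with $\beta$ are not a priori the same. To finish, you would need to determine the sign of a single generic crossing of your pushed $s_1$-arc with $D_\beta^1$ (or $D_\beta^2$) directly from the orientations of $\gamma_j$, $D_\beta^j$, and the co-orientation on $T$, case by case in the parities of $i$ and the choice of $j$ --- essentially the same figure-check the paper performs for its two vertex crossings. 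You flag this at the end, so the gap is acknowledged rather than hidden, but as written the sign is not established.

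A minor bookkeeping point worth making explicit: the identification $a_1 = |S \cap \{m+1,\dots,m+r_i\}|$ (or the analogous set when $\mu=-1$) is what connects your geometric crossing count on $\beta_1$ to the arithmetic of the jump function; it holds because the $\beta$-arc separating $R_{x-1}$ from $R_x$ lies on $\beta_1$ exactly when $x\in S$, and the $s_1$-side of $\Delta_j'$, being parallel to $\alpha$, crosses exactly the $r_i$ such arcs between its two endpoints.
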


\begin{proof}
Isotope $\gamma_j$ by pushing the interior of its side of slope $s_1$ slightly into $V_\alpha$ and the interior of its side of slope $s_2$ slightly into $V_\beta$.
The intersection between the isotoped copy of $\gamma_j$ and the spine of $L(p,q)$ consists of one point of $R_m$ and one point of  $R_{m+(-1)^{i+j-1} r_i}$.
As the normal orientation to $C$ at $T$ points from $V_\alpha$ to $V_\beta$, a check shows that $\gamma_j$ meets $R_{m+(-1)^{i+j-1} r_i}$ in a point of sign $(-1)^{i+j-1}$. Figure~\ref{fig:linkingiodd} may aid in checking this in the case that $i$ is odd, ignoring the features incident with $x_k$.
Since $R_m$ has coefficient 0, it follows that the intersection number of $\gamma_j$ with $C$ is $(-1)^{i+j-1} c_{m+(-1)^{i+j-1} r_i}$.
Hence $lk(\gamma_j,K)$ equals the value stated in the Lemma.
\end{proof}

\begin{figure}
\includegraphics[width=2in]{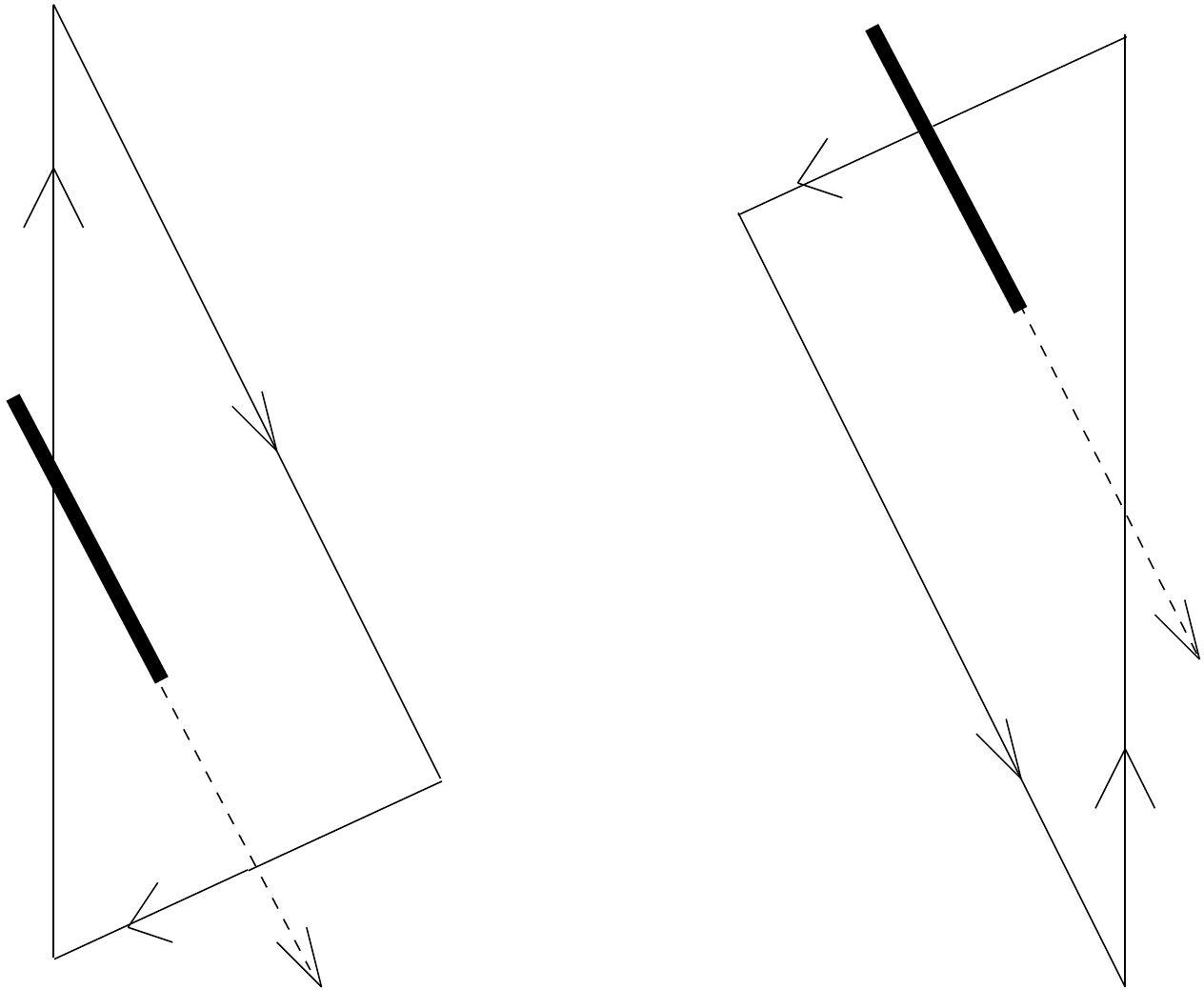}
\put(-160,75){$\beta_1$}
\put(-55,120){$\beta_1$}
\put(5,35){$\beta_2$}
\put(-103,-7){$\beta_2$}
\put(-30,112){$s_1$}
\put(-135,120){$c_m$}
\put(-135,-10){$c_m$}
\put(-13,120){$c_m$}
\put(-43,38){$s_2$}
\put(-13,-10){$c_m$}
\put(-97,15){$s_1$}
\put(-85,90){$c_{m+r_i}$}
\put(-85,30){$c_{m-r_i}$}
\put(-102,60){$s_2$}
\put(-117,38){$x_k$}
\put(-35,70){$x_k$}
\caption{ $\Delta_1'$ and $\Delta_2'$ pictured for $i$ odd and the case that $x_k$ is contained in the interior of one or the other.
Observe that $K$ is oriented into the plane of the diagram at $x_k$.
}
\label{fig:linkingiodd}
\end{figure}

\begin{proof}
[Proof of Proposition \ref{prop: zero coeff}]
Recall that the weights on the regions are all nonnegative.
We first assume $i$ is odd.
Assume for contradiction that $c_{m-r_i} > 0$ and $\Delta_1'$ does not contain $x_0$ in its interior.
If $\Delta_1'$ does not contain $x_k$ either, then $\Delta_1'$ guides an isotopy of $\gamma_1$ to $-\gamma'$ in the complement of $K$.
Hence the linking number of $\gamma_1$ with $K$ is zero by Lemma~\ref{lem: linking 1}, but then Lemma~\ref{lem: linking 2} contradicts our assumption that $c_{m-r_i} \ne 0$.  
Thus, we may assume that $\Delta_1'$ contains $x_k$ in its interior.
The triangle $\Delta_1'$ shows that $\gamma_1 + \gamma'$ is homologous in $T - K$ to a meridian of $K$ that orients clockwise around $x_k$. 
See Figure~\ref{fig:linkingiodd}.
Since $K$ orients from $V_{\beta}$ to $V_{\alpha}$ at $x_k$, the linking number of this meridian with $K$ is $1$.
(Note that the sign convention for linking number uses the right-hand rule and the convention that $V_\alpha$ is below the page.)
Along with Lemma~\ref{lem: linking 1}, this shows that $lk(\gamma_1,K) = 1$.
But then Lemma~\ref{lem: linking 2} gives $c_{m-r_i}=-\theta < 0$, a contradiction.

The case in which $i$ is odd and we assume that $c_{m+r_i} > 0$ is identical, mutatis mutandis.
In particular, $\gamma_2 + \gamma'$ is homologous to a meridian of $K$ with the counterclockwise orientation.
It has linking number $-1$ with $K$, and we steer to the same contradiction that $c_{m+r_i} = -\theta < 0$. 
This proves part one of the Proposition. 

The case in which $i$ is even follows similarly.
The only change is that we consider the possibility that $x_0$ is contained in the interior of $\Delta_1'$ or $\Delta_2'$, noting that $K$ is oriented from $V_{\alpha}$ to $V_{\beta}$ at $x_0$.
\end{proof}

Proposition~\ref{prop: zero coeff} shows that moving up or down along $\alpha$ by $r_i$ units finds new weight zero regions until the $\Delta_j'$ capture $x_0$ or $x_k$.
This allows us to find a sequence of $\lceil r_{i-1} / r_{i} \rceil$ weight zero regions about any given weight zero region.  

\begin{prop}\label{prop:lotsofzeros}  Let $\delta$ be the distance along $\alpha$ between consecutive points of $\alpha \cap \beta$. Assume $R_m$ has 
coefficient zero. Then $R_m$ lies in a sequence of $\lceil r_{i-1} / r_{i} \rceil$ regions along $\alpha$ in steps of length $\delta r_i$ 
all of which have weight zero.  That is, there is an integer $M$ and a collection of indices $I_m=\{ M+h r_i  \, | \,  h = 0, \dots, \lfloor r_{i-1} / r_{i} \rfloor \}$ with the properties that $m \in I_m$ and if $j \in I_m$ then $c_{j}=0$ (subscripts taken mod $p$). 
\end{prop}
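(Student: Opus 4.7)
My strategy is to iterate Proposition~\ref{prop: zero coeff} in both directions along $\alpha$, starting from $R_m$, and then bound the length of the resulting chain of weight-zero regions by a geometric analysis of how the translated parallelograms sweep past the ``critical'' vertex $v$ (which equals $x_0$ for $i$ odd and $x_k$ for $i$ even).

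\textbf{Iteration step.}  First, I would set up a generic initial position for the parallelogram: let $P'^{(0)}$ be a translate of $P=\Delta_1\cup\Delta_2$ whose duplicated vertex lies in the interior of $R_m$ and off $\gamma$.  Proposition~\ref{prop: zero coeff} then forces $c_{m+r_i}=0$ unless the ``forward'' triangle of $P'^{(0)}$ (the one stretching from $R_m$ to $R_{m+r_i}$) contains $v$ in its interior.  When the forward conclusion succeeds, I translate $P'^{(0)}$ rigidly along $\alpha$ by the vector $r_i\delta\,\hat\alpha$ to obtain $P'^{(1)}$, whose duplicated vertex lands in the interior of $R_{m+r_i}$ (still off $\gamma$ if the initial vertex was chosen generically).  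Applying Proposition~\ref{prop: zero coeff} to $R_{m+r_i}$ and $P'^{(1)}$ yields $c_{m+2r_i}=0$, and so on.  The analogous iteration runs in the backward direction using the ``backward'' triangle at each step.

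\textbf{Counting.}  The consecutive forward triangles $\Delta^{+}_{0},\Delta^{+}_{1},\ldots$ (the forward halves of $P'^{(0)},P'^{(1)},\ldots$) each have base of length $r_i\delta$ along $\alpha$, and each is translated by exactly $r_i\delta$ from the previous one, so they tile a strip bounded below by a segment of $\alpha$ and above (after crossing $\gamma$) by a segment of $\beta$.  Because $\beta$ has slope of absolute value $r_{i-1}/r_i$ in the $C_i$-coordinates, the strip swept out by $h$ consecutive forward triangles rises through one full $\gamma$-period on the torus only after $h = \lceil r_{i-1}/r_i\rceil$ steps; until then the strip remains on one side of the latitude of $v\in\gamma$ and cannot contain $v$.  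Hence the forward iteration succeeds at least $\lceil r_{i-1}/r_i\rceil$ times, i.e.\ $c_{m+hr_i}=0$ for $h = 0,1,\ldots,\lceil r_{i-1}/r_i\rceil-1$.  The same argument in the backward direction, combined with a suitable choice of the starting index $M$ (obtained by sliding $P'^{(0)}$ within $R_m$), assembles all the zero-weight regions into a single arithmetic progression of indices $M, M+r_i,\ldots,M+\lfloor r_{i-1}/r_i\rfloor r_i$, as claimed.

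\textbf{Main obstacle.}  The essential technical point is the geometric accounting.  I must check that the successive forward translates $\Delta^{+}_{h}$ have pairwise disjoint interiors, that their union meets each ``level set'' of the $\gamma$-coordinate in an interval of length $r_i\delta$, and that the first translate whose interior contains $v$ occurs precisely at step $\lceil r_{i-1}/r_i\rceil$.  All three facts rest on the matching between the base length $r_i\delta$ of each triangle along $\alpha$ and the slope $r_{i-1}/r_i$ of $\beta$, together with the parity-dependent placement of $\alpha_j,\beta_j$ inside the fundamental domain described before Proposition~\ref{prop: zero coeff}.  A little extra care is required to handle the boundary case $r_i\mid r_{i-1}$ (where the ceiling is an equality) and to pick the initial position in $R_m$ so that the combined forward--backward chain attains its maximum possible length.
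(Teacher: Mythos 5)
There is a genuine gap in your counting step. You claim that the forward iteration from $R_m$ necessarily succeeds at least $\lceil r_{i-1}/r_i\rceil$ times, on the grounds that the strip swept out by the forward triangles ``rises through one full $\gamma$-period'' only after that many steps and therefore cannot capture the critical vertex $v$ earlier. But $R_m$ can sit anywhere in the chain of weight-zero regions, so there is no reason the first forward triangle avoids a lift of $v$: if $R_m$ is the last region in the chain, $\Delta_2'$ already contains $v$ and $c_{m+r_i}\neq 0$. Since $v$ lifts to a full lattice in the universal cover, the forward translates can encounter a lift of $v$ at the very first step regardless of latitude, so ``until then the strip remains on one side of the latitude of $v$'' has no content. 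The failure is exactly the ``check'' you flag as your main obstacle (``that the first translate whose interior contains $v$ occurs precisely at step $\lceil r_{i-1}/r_i\rceil$''); that statement is false, and the argument collapses without it.

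The paper's proof circumvents this by not fixing $m$ as a starting point for a one-sided count. Instead it takes the \emph{maximal} chain $\{R_{M+hr_i}\,|\,h=0,\dots,N\}$ of zero-weight regions through $R_m$, lifts the union of parallelograms to a big parallelogram $\mathcal P$ in the universal cover, and observes that maximality forces $\Delta_1^0$ and $\Delta_2^N$ to each capture a lift of $x_0$ (for $i$ odd), say $X_0$ and $X_N$. It then shows $y(X_N)\le y(X_0)$ and $x(X_0)-x(X_N)=w\ge 1$, counts intersections of the segment $\overline{X_NX_0}$ with lifts of $\beta$ (at least $wr_{i-1}+1$ of them), and pushes these onto the bottom side of $\mathcal P$ to get $(N+1)\delta r_i>w\delta r_{i-1}$, hence $N\ge\lfloor r_{i-1}/r_i\rfloor$. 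The essential idea you are missing is that the lower bound comes from the constraint relating the \emph{two} capture events at the two ends of the maximal chain, not from a one-sided sweep away from $R_m$.
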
  

\begin{proof}
Given $R_m$ whose weight is zero, 
let $M$ be an integer mod $p$ such that  $\{R_{M+h r_i} | h = 0, \dots, N \}$ is a maximal sequence  in steps of length $\delta r_i$ along $\alpha$ of 
regions whose weight is zero
 and which includes $R_m$. We will show that $N \geq  
 \lfloor r_{i-1} / r_{i} \rfloor$.
 
 Let $\rho:\widetilde T \to T$ be the universal cover of $T$ with the lifted metric.
 Lift the sequence $\{ R_{M+h r_i} \, | \, h = 0, \dots, N \}$ to a sequence
 in $\widetilde T$ along a component $\widetilde \alpha$ of $\rho^{-1}(\alpha)$. Denote these regions by the same  name.
For each $h = 0, \dots, N$, pick a point $O_h$ in the interior of $R_{M+h r_i}$ in $\widetilde T$ such that the distance between $O_h$ and $O_{h+1}$ is $\delta r_i$ and $\overline{O_hO_{h+1}}$ is parallel to $\widetilde \alpha$. 

For each $h = 0, \dots, N$, let $O_h'$ be the point in $\widetilde T$ that is vertically one unit up from $O_h$. Thus $O_h'$ and $O_h$ project to the same point in $T$.
Lift the triangles $\Delta_1, \Delta_2$ to $\widetilde T$, and let $P$ be a lifted parallelogram $\Delta_1 \cup \Delta_2$.
For each $h = 0, \dots, N$, let $P_h$ be the parallelogram $P$ in $\widetilde T$ translated so $\gamma=\overline{O_hO_{h+1}}$  and let $\Delta_1^h, \Delta_2^h$ be the corresponding constituent triangles of $P_h$, which are translates of $\Delta_1, \Delta_2$.
The union of the $P_h$ over $h = 0, \dots, N$ is a parallelogram, $\mathcal{P}$.

We assume that $i$ is odd. The argument for $i$ even is analogous.

By Proposition~\ref{prop: zero coeff} and the maximality of 
$\{ R_{M+h r_i} | h = 0, \dots, N \}$, $\Delta_1^0, \Delta_2^N$ must 
each contain a (single) preimage of $x_0$. Call these points $X_0,X_N$
respectively. For example, see Figure~\ref{fig:OmorethanN}.  

Assign $x,y-$coordinates to points in $\widetilde T$. For a point $P$ in $\widetilde T$,
let $(x(P),y(P))$ be its coordinates. 

Note that $y(X_N) \leq y(X_0)$.
Otherwise, the horizontal line through $X_N$ meets the line segment $\overline{O_0O_0'}$ and at a point higher than the horizontal line through $X_0$ meets it ($\overline{O_NO_N'},\overline{O_0O_0'}$ have the same length and $\alpha$ has postive slope).
As $X_0, X_N$ are lifts of $x_0$, we reach the contradiction
$1=|\overline{O_0O_0'}|> y(X_N)-y(X_0) \geq 1$, where $| \cdot |$ denotes length. See Figure~\ref{fig:OmorethanN}.  

\begin{figure}
\includegraphics[width=5in]{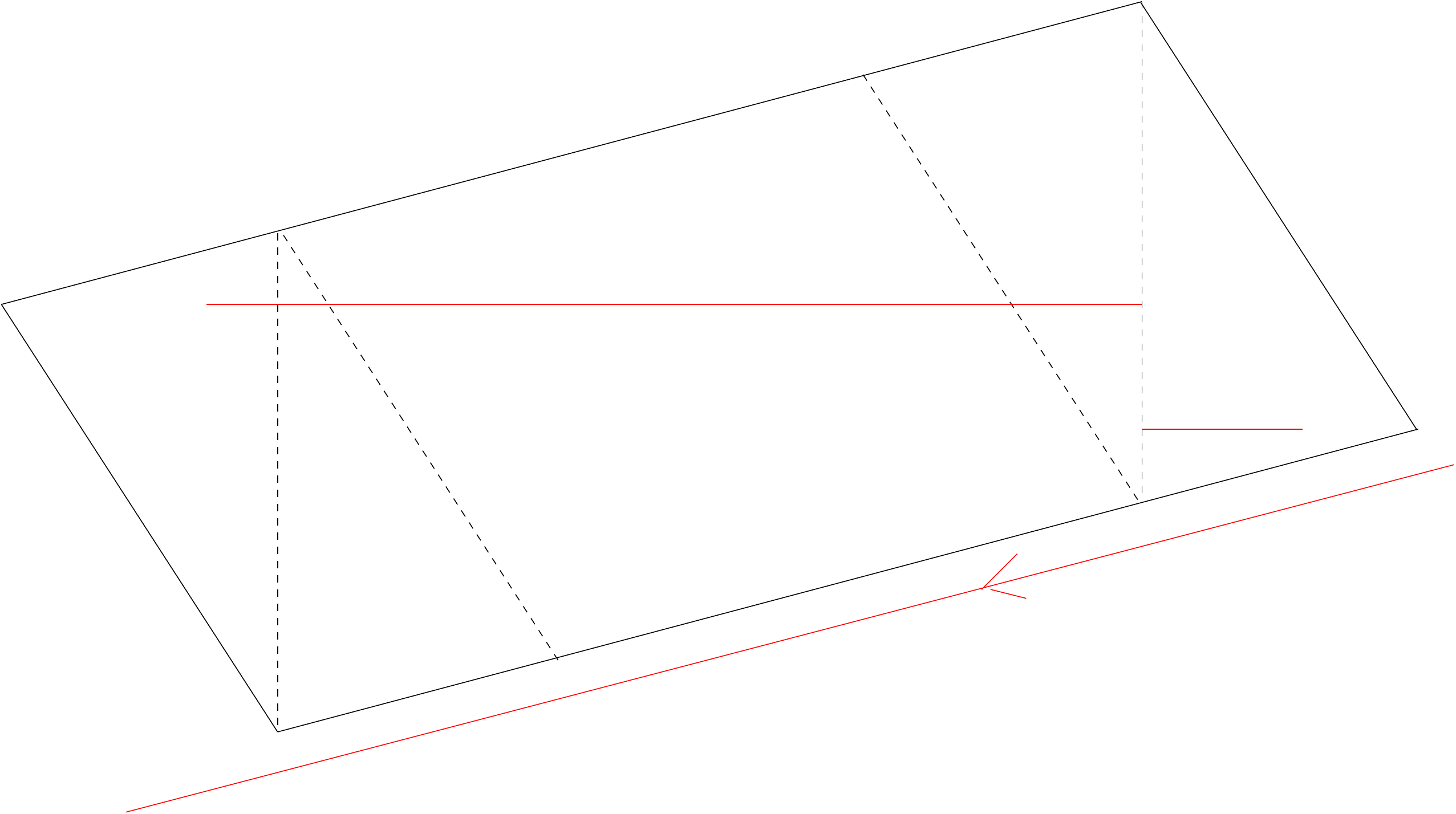}
\put(-305,14){$O_N$}
\put(-305,150){$O_N'$}
\put(-230,30){$O_{N-1}$}
\put(-325,122){$X_N$}
\put(-180,30){$\tilde \alpha$}
\put(-37,93){$X_0$}
\put(-73,200){$O_0'$}
\put(-73,109){$\bigg\}  \geq 1$}
\put(-73,72){$O_0$}
\caption{The parallelogram $\mathcal{P}$ is pictured for the case that $i$ is odd. Points $X_N, X_0 \in \rho^{-1}(x_0)$ are shown with $y(X_N) > y(X_0)$, driving for a contradiction.}
\label{fig:OmorethanN}
\end{figure}

Let $x(X_0)-x(X_N) = w \geq 1$.
Consider the line segment $\overline{X_NX_0}$ lying within $\mathcal{P}$.
Since $y(X_0) \ge y(X_N)$, the line segment $\overline{X_NX_0}$ has non-negative slope, while each component of $\rho^{-1}(\beta)$ is a line of negative slope $s_2=-r_{i-1}/r_i$. Let $X_N' = (x(X_N),y(X_0))$.
It follows that each component of $\rho^{-1}(\beta)$ with a point of intersection on the horizontal line $\overline{X_N'X_0}$ also has a point of intersection on $\overline{X_NX_0}$.
Hence $|\overline{X_NX_0} \cap \rho^{-1}(\beta)| \ge |\overline{X_N'X_0} \cap \rho^{-1}(\beta)| = w r_{i-1} +1$, including endpoints in the count.
The last equality comes from the fact that a horizontal side of a fundamental domain $[0,1] \times [0,1]$ with corners in $\rho^{-1}(x_0)$ intersects $\beta$ in $r_{i-1}+1$ points, see Figure~\ref{fig:lengthestimate}.
Since $\mathcal{P}$ contains $\overline{X_NX_0}$ and has sides of slope $s_2$, each component of $\rho^{-1}(\beta)$ that intersects $\overline{X_NX_0}$ intersects the bottom side of $\mathcal{P}$ as well.
Thus, the bottom side of $\mathcal{P}$ meets $\rho^{-1}(\beta)$ in at least $w r_{i-1} +1$ points. 
See Figure~\ref{fig:0lessthanN}.
Thus, $(N+1) \delta r_i >  w \delta r_{i-1}$, recalling that the sides of $\Delta_1$, $\Delta_2$ of slope $s_1$ have length $\delta r_{i}$.
Thus, $N+1 > r_{i-1}/r_i$, and $N \geq \lfloor r_{i-1} / r_{i} \rfloor$ as required.
\begin{figure}
\includegraphics[width=2in]{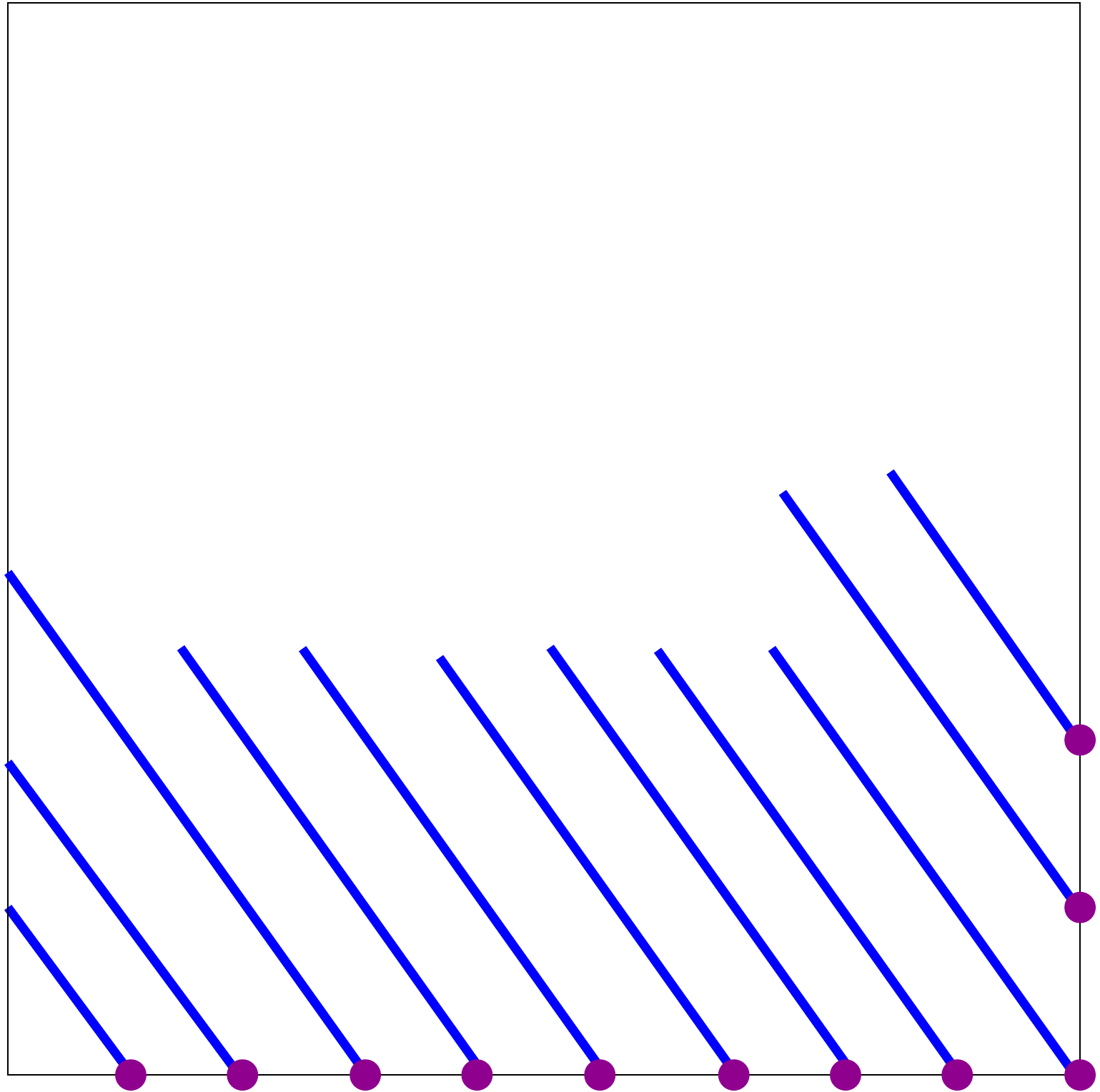}
\put(-80,-10){$r_{i-1}$}
\put(-150,-10){$X_N'$}
\put(-10,-10){$X_0$}
\caption{Here $x(X_0)-x(X_N) = w = 1$. In this case, $\overline{X_N'X_0}$
 corresponds to the bottom side of a fundamental domain $[0,1] \times [0,1]$. Thus $|\overline{X_N'X_0} \cap \rho^{-1}(\beta)| =  r_{i-1} +1$. 
}
\label{fig:lengthestimate}
\end{figure}
\end{proof}

\begin{figure}
\includegraphics[width=5in]{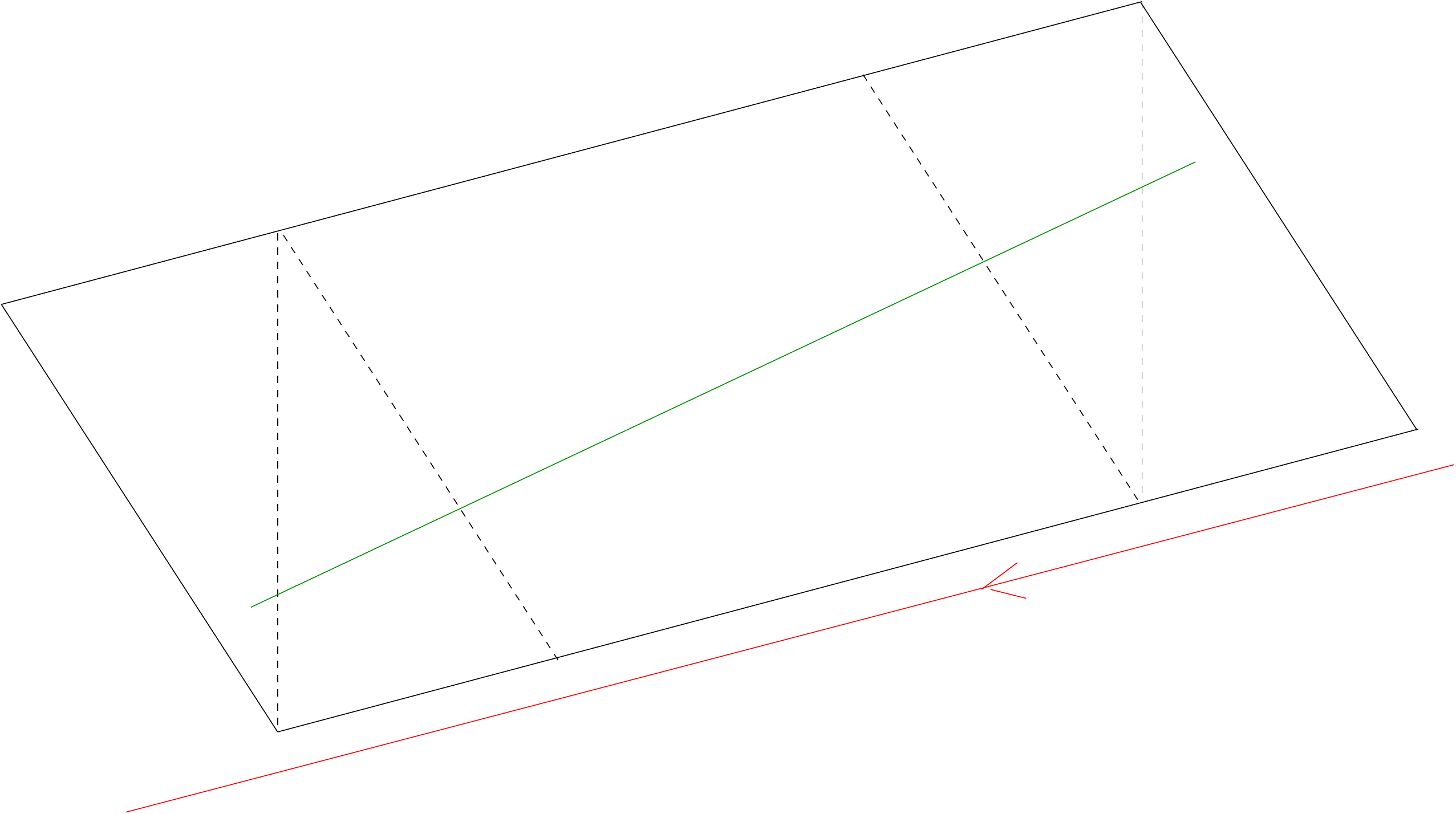}
\put(-305,14){$O_N$}
\put(-305,150){$O_N'$}
\put(-230,30){$O_{N-1}$}
\put(-309,44){$X_N$}
\put(-180,30){$\tilde \alpha$}
\put(-68,150){$X_0$}
\put(-73,200){$O_0'$}
\put(-73,72){$O_0$}
\caption{The bottom edge of the parallelogram $\mathcal{P}$ intersects $\rho^{-1}(\beta)$ in at least as many points as $\overline{X_NX_0}$.}
\label{fig:0lessthanN}
\end{figure}

Let $g : \bZ \to \bZ$ be the $p$-periodic extension to the integers of the function $g : \{0,\dots,p-1\} \to \bZ$ of Section~\ref{ss: notation} corresponding to Equation~(\ref{eq: jump function}). 
As stated in Equation~(\ref{eq: gandc}) of Section~\ref{ss: domains}, $g(j)=c_j$ is the weight of the region $R_j$ for the domain $D$
when $j$ takes values in $\{0, \dots, p-1\}$. Note that $0$ is the minimum value assumed by $g$. 

For a function $g:\bZ \to \bZ$ bounded from below, define
\[
m(g)=\{x \in \bZ \, | \, g(x)=\min(g)\},
\]
the set of {\bf minimizers} of $g$.
Let $a>b>0$ be relatively prime integers.
We say that {\bf $g$ has an $(a,b)$-symmetry on its minima} if $m(g)$ is a union of arithmetic progressions of length $\lceil a / b \rceil$ and common difference $b$.
That is, whenever $j \in m(g)$,
there exists an integer $M_j$ such that  $j$ lies in a sequence of $\lceil a / b \rceil$  elements of $m(g)$: $M_j, M_j+b, \dots, M_j+h b, \dots, M_j+\lfloor a / b \rfloor \cdot b$.

 Proposition~\ref{prop:lotsofzeros} then takes on the following form:

\begin{thm}\label{thm:lotsofzerosg}
Let $\theta$ be the order of $K(p,q,k)$. Assume that $\theta \, | \, r_i$ for some remainder $r_i$ of the pair $(p,q)$. Let $g: \bZ \to \bZ$ be the function associated to $K(p,q,k)$ described in Section~\ref{ss: notation}. 
Then $g$ has an $(r_{i-1},r_i)$-symmetry on its minima.
\qed
\end{thm}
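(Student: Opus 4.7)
My plan is to derive Theorem~\ref{thm:lotsofzerosg} as essentially a translation of Proposition~\ref{prop:lotsofzeros}, using the identification of $g$ with the coefficient sequence of the domain $D$. By Equation~(\ref{eq: gandc}), the values $g(0),\ldots,g(p-1)$ are precisely the coefficients $c_0,\ldots,c_{p-1}$ of the regions $R_0,\ldots,R_{p-1}$, and $g$ is the $p$-periodic extension. From the construction of the domain $D$ in Section~\ref{ss: domains}, $\min_i c_i = 0$, so $\min g = 0$. Consequently, $j \in m(g)$ if and only if the corresponding region (with index taken mod $p$) has weight zero in $D$.

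Before applying Proposition~\ref{prop:lotsofzeros}, I would first reconcile a small discrepancy: the proposition produces an arithmetic progression of length $\lfloor r_{i-1}/r_i \rfloor + 1$, while the definition of $(r_{i-1},r_i)$-symmetry demands length $\lceil r_{i-1}/r_i \rceil$. These agree precisely when $r_i \nmid r_{i-1}$, i.e.\ when $i \ne n-1$. For a simple knot we have $0 < k < p$, so $\gcd(p,k) < p$ and hence $\theta = p/\gcd(p,k) \ge 2$. Combined with the hypothesis $\theta \mid r_i$, this forces $r_i \ge 2$, which rules out $r_i = r_{n-1} = 1$. Therefore $i \le n-2$, so $r_i \nmid r_{i-1}$ and the two counts coincide.

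With that established, the proof proceeds directly. Given any $j \in m(g)$, pick its representative in $\{0,\dots,p-1\}$; since $g(j) = c_j = 0$, the region $R_j$ has weight zero. Apply Proposition~\ref{prop:lotsofzeros} to obtain an integer $M_j$ and indices $\{M_j + h r_i \mid h = 0, \dots, \lfloor r_{i-1}/r_i \rfloor\}$ (mod $p$) on all of which the coefficient vanishes, and which contain $j$. By $p$-periodicity of $g$, this translates to an arithmetic progression of $\lceil r_{i-1}/r_i \rceil$ elements of $m(g)$ with common difference $r_i$ containing $j$. Thus $m(g)$ is a union of such progressions, which is exactly the definition of $g$ having an $(r_{i-1}, r_i)$-symmetry on its minima.

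There is essentially no obstacle here: all the geometric content — setting up the parallelogram $P$ via the matrix $C_i$, the linking number computations of Lemmas~\ref{lem: linking 1} and \ref{lem: linking 2}, and the universal cover argument bounding $N \ge \lfloor r_{i-1}/r_i \rfloor$ — is already packaged into Proposition~\ref{prop:lotsofzeros}. The only subtlety is the floor-versus-ceiling bookkeeping, which is resolved by the observation that a simple knot has order $\theta \ge 2$, forcing $i \le n-2$.
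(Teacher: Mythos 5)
Your proof is correct and takes the same route the paper intends: the paper states Theorem~\ref{thm:lotsofzerosg} with a \qed, treating it as the direct translation of Proposition~\ref{prop:lotsofzeros} under the identification $g(i)=c_i$ from Equation~(\ref{eq: gandc}) together with $p$-periodicity. Your floor-vs-ceiling check (that $0<k<p$ forces $\theta\ge 2$, hence $r_i\ge 2$, hence $r_i\nmid r_{i-1}$) is a reasonable piece of bookkeeping that the paper leaves implicit, since its own definition of $(a,b)$-symmetry already uses $\lceil a/b\rceil$ and $\lfloor a/b\rfloor+1$ interchangeably and so tacitly assumes $b>1$.
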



\section{A formula for the number of minima of $S(p,q,k)$.}
\label{s: minima}


The goal of this section is a proof of Theorem~\ref{thm: minimizer}.
This will be done by a recursive argument, and we begin in Subsection \ref{ss: dynamics} by setting up an appropriate context in which to work, that of an affine progression over a half open interval of integers.
For notational ease, we use the variables $p$ and $q$ to phrase our results throughout this Section.
When we prove Theorem~\ref{thm: minimizer} in Subsection \ref{ss: harmonics}, we fix the pair of $p$ and $q$ and then call the preparatory lemmas of Subsections \ref{ss: minima prep}-\ref{ss: minima divides} with the variables $p$ and $q$ in their statements replaced by a pair of consecutive remainders for the fixed pair.
Thus, for emphasis, we do not assume any relationship between $\theta$ and $p$ until the proof of Theorem~\ref{thm: minimizer}.
Moreover, Subsections \ref{ss: minima does not divide} and \ref{ss: minima divides} handle the cases that arise when $\theta \nmid q$ and $\theta \mid q$, respectively.

\subsection{Dynamics and modular arithmetic}
\label{ss: dynamics}

For $a,p \in \bZ$, $p > 0$, let $[a]_p$ denote the least positive residue of $a \pmod p$. 
For an interval $I = [s,s+p)$, $s \in \bZ$, an {\bf affine progression} in $I$ of {\bf difference} $\epsilon q$, {\bf length} $l$, and {\bf width} $p$ is a subset $S \subset I$ such that $S \smallsetminus \{ s \}$ takes the form
\[
\{ s +[\epsilon q]_p, s + [2 \epsilon q]_p,\dots, s +[l \epsilon q]_p \},
\]
where $q,l \in \bZ$, $\gcd(p,q)=1$, $0 <  q < p$, $0 \leq l < p$, and $\epsilon= \pm 1$.
Note that $s$ may or may not belong to $S$.
Let $b = s +  [l \epsilon q]_p$ denote its {\bf break point}.
In the extremal case that $l=0$, we allow ourselves to set the difference as we see fit, including equal to 0, and $b=s$ may or may not belong to $S$.

The principal example is the set $S$ of Subsection \ref{ss: notation} of the Introduction, which is an affine progression in the interval $[0,p)$ with difference $q$ and length $l$. 

An affine progression in $I$ can be characterized as the set of images of $s$ under iterations of a simple arithmetic transformation.
Define the {\bf cycle}-$I$-{\bf by}-$\epsilon q$ map $f = f_{I,\epsilon q}$ by
\begin{equation}
\label{e:shift}
f : I \to I, \quad f(x) =[x-s+\epsilon q]_p+s.
\end{equation}
Thus, $f$ translates $x$ by $-s$ into $[0,p)$, cycles by $\epsilon q \pmod p$, and translates by $s$ back to $I$.
The affine progression $S$ therefore consists of the images of $s$ under the first $l$ iterates of $f$, possibly along with $s$.
It is also the set of images of $f(b)$ under the first $l$ iterates of $f^{-1} = f_{I,- \epsilon q}$ (possibly along with $s$).
Note that the set of images of any point under {\em all} iterates of $f$ is the whole of $I$, since $\gcd(p,q)=1$.
Note also that we can alter the domain of $f$ to any subset of $\bZ$, and it is convenient to extend it to the closure $\overline{I} = [s,s+p]$.

Fix a non-empty interval $J \subset I$ and a point $x \in \overline{I}$.
There exists a smallest positive integer $t_1=t_1(x)$ for which $x_1 : =f^{(t_1)}(x) \in J$.
For $k \ge 2$, inductively define
\[
t_k = t_k(x) = t_{k-1}(x) + t_1(x_{k-1}) \quad \textup{and} \quad x_k = f^{(t_k)}(x).
\]
The point $x_k$ is the {\bf $k$-th return} of $x$ to $J$ under $f$ and $t_k$ its {\bf $k$-th return time}.

For the following Lemmas, write $p = dq + r$,  $r = [p]_q$, and $J = [y,y+q) \subset I$ for some $y \in \bZ$.
The proof of the first Lemma is a straightforward exercise.
Note that if $x \in J$ and $y$ is the first return of $x$ to $J$ under $f_{I,- \epsilon q}$, then $x$ is the first return of $y$ to $J$ under $f_{I,\epsilon q}$. 

\begin{lem}
\label{lem: first return}
The first return of $x \in \overline{J}$ to $J$ under $f_{I,q}$ is $f_{J,-r}(x)$. 
The first return of $x \in \overline{J}$ to $J$ under $f_{I,-q}$ is $f_{J,r}(x)$. \qed
\end{lem}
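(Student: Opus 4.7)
The plan is to identify the cycle map $f_{I,q}$ with the rotation by $q$ on the circle $\bR/p\bZ$, using $I$ as a fundamental domain (a preliminary shift lets me assume $s = 0$). In these coordinates, $J$ is a sub-arc whose length $q$ equals the rotation amount, and the whole argument rides on the tiling this produces together with the congruence $dq \equiv -r \pmod p$.

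First I would verify that the arcs $J, f_{I,q}(J), \dots, f_{I,q}^{(d-1)}(J)$ are pairwise disjoint: each has length $q$, consecutive ones are translated by $q$, and their total length is $dq = p - r < p$. It follows that for $x \in \overline{J}$ and $1 \le k \le d-1$, the iterate $f_{I,q}^{(k)}(x)$ lies outside $J$, so the first return time is at least $d$. Since $dq \equiv -r \pmod p$, the iterate $f_{I,q}^{(d)}$ acts as a rotation by $-r$, and $f_{I,q}^{(d)}(J)$ is an arc of length $q$ overlapping $J$ precisely in $[y, y + q - r)$.

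Next I would partition $\overline{J}$ into the two sub-intervals $[y, y + r)$ and $[y + r, y + q]$. If $x \in [y+r, y+q]$, then $f_{I,q}^{(d)}(x) = x - r \in J$, so the first return occurs at step $d$ and equals $x - r$. If instead $x \in [y, y+r)$, then $f_{I,q}^{(d)}(x)$ sits in the length-$r$ gap adjacent to $J$, so one more iteration is required, giving first return $f_{I,q}^{(d+1)}(x) = x + q - r$. Evaluating $f_{J,-r}(x) = [x - y - r]_q + y$ on these two sub-intervals by a short case split yields exactly $x - r$ in the first case and $x + q - r$ in the second, matching the first return in each and proving the first statement.

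For the second statement, I would invoke the inverse relations $f_{I,-q} = f_{I,q}^{-1}$ and $f_{J,r} = f_{J,-r}^{-1}$: inverting the identity ``the first return of $x$ under $f_{I,q}$ is $f_{J,-r}(x)$'' gives ``the first return of $f_{J,-r}(x)$ under $f_{I,-q}$ is $x$,'' which, after renaming, is precisely the second claim; this symmetry is already flagged in the paragraph immediately preceding the lemma. I expect no real obstacle, only mild bookkeeping at the boundary point $x = y + q$ of $\overline{J}$, which falls into the first sub-interval and is dispatched by direct substitution.
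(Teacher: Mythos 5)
The paper gives no proof for this lemma (it is flagged as a ``straightforward exercise''), so there is nothing to compare against, but your approach is the natural one: pass to the circle $\bR/p\bZ$, observe that the $d = \lfloor p/q \rfloor$ translates $J, J+q, \dots, J+(d-1)q$ tile an arc of length $dq = p - r < p$ and are pairwise disjoint, deduce that the first return time is $d$ or $d+1$ depending on whether $x \in [y+r, y+q]$ or $x \in [y, y+r)$, and match the return value $x - r$ (respectively $x + q - r$) against $f_{J,-r}(x)$ by a short case split. All of that is correct, and the inversion $f_{I,-q} = f_{I,q}^{-1}$, $f_{J,r} = f_{J,-r}^{-1}$ to pass to the second statement is precisely the symmetry the paper records just before the lemma.

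One caveat: your closing remark that the boundary point $x = y+q$ is ``dispatched by direct substitution'' for the \emph{second} statement is not correct. Direct substitution in fact shows a mismatch there: $f_{I,-q}(y+q) = y \in J$, so the first return of $y+q$ under backward rotation is $y$, attained at step $1$, whereas $f_{J,r}(y+q) = [q+r]_q + y = y + r \neq y$. The inversion argument, tracked carefully, only produces the second statement for $x \in J$, since $f_{J,-r}$ is a bijection of $J$ to itself and the inverse correspondence you invoke carries $J$ to $J$, not $\overline{J}$ to $\overline{J}$. This discrepancy is an artifact of the lemma's statement being slightly too generous (it writes $\overline{J}$ for both assertions, though the paper only ever applies the second assertion at points of $J$); you should record the failure at the right endpoint and note that it is harmless for the applications rather than assert that it works out.
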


\noindent

The next Lemma asserts that affine progressions restrict to affine progressions on nice subintervals, enabling the inductive argument of Theorem \ref{thm: minimizer}.

\begin{lem}
\label{lem: dynamics}
Suppose that $S$ is an affine progression in $I$ with difference $\epsilon q$, $q > 0$, $\epsilon = \pm 1$, with break point $b$ and length $l$.
Then $S \cap J$ is an affine progression if $y \in \{s,b,s+p-q\}$.
It has difference $\epsilon r$ if $y=b$ and difference $- \epsilon r$ otherwise.
\end{lem}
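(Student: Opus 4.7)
The plan is to leverage Lemma~\ref{lem: first return} to transfer the cycle-by-$\epsilon q$ dynamics on $I$ to cycle-by-$\pm\epsilon r$ dynamics on $J$. In each of the three cases $y \in \{s, b, s+p-q\}$, I will identify a point in $\overline{J}$ whose $f = f_{I,\epsilon q}$-orbit (or $f^{-1}$-orbit) generates $S$ up to inclusion of $s$. Applying the first-return principle to this orbit then exhibits $S \cap J$ as an initial segment of an orbit under the appropriate first-return map on $J$, which is precisely what it means for $S \cap J$ to be an affine progression in $J$.

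First I would dispose of the case $y = s$. Since $s \in J$ is both the start of $J$ and the generator of $S$, and since Lemma~\ref{lem: first return} identifies the first-return map to $J$ under $f$ as $f_{J,-\epsilon r}$, the elements of $S$ lying in $J$ are exactly (up to inclusion of $s$) the points $f_{J,-\epsilon r}^k(s) = s + [-k\epsilon r]_q$ for $1 \le k \le m$, where $m$ counts returns within the first $l$ iterates of $f$. This is an affine progression in $J$ starting at $s$ with difference $-\epsilon r$, as required.

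The case $y = b$ goes analogously but with the orbit reversed: $S$ is (up to inclusion of $s$) an initial segment of the $f^{-1}$-orbit of $b$, because $b = f^l(s)$ and $f^{-i}(b) = f^{l-i}(s)$. The other half of Lemma~\ref{lem: first return} says the first-return map to $J$ under $f^{-1} = f_{I,-\epsilon q}$ is $f_{J,\epsilon r}$, so $S \cap J$ is an initial segment of the $f_{J,\epsilon r}$-orbit of $b$, giving an affine progression of difference $\epsilon r$ starting at $y = b$.

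The main obstacle will be the third case $y = s + p - q$, which hinges on the non-obvious modular-arithmetic observation that $f(y) = s$ when $\epsilon = 1$ (using $[p]_p = 0$) and $f(s) = y$ when $\epsilon = -1$ (using $[-q]_p = p-q$). Once this is verified, in either subcase the $f$-orbit of $y$ contains $S \setminus \{s\}$ as a contiguous block immediately following $y$ (when $\epsilon = 1$) or starting at $y$ (when $\epsilon = -1$). Because $J = [s+p-q, s+p)$ avoids $s$ whenever $q < p$, we have $S \cap J = (S \setminus \{s\}) \cap J$, and a final application of Lemma~\ref{lem: first return} identifies $S \cap J$ as an initial segment of the $f_{J,-\epsilon r}$-orbit of $y$, yielding an affine progression of difference $-\epsilon r$. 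The extremal case $l = 0$ is handled separately by direct inspection, since then $S \setminus \{s\}$ is empty and the Lemma's own convention renders the conclusion about the difference vacuous.
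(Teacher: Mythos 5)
Your proposal is correct and takes essentially the same route as the paper: in all three cases you reduce to Lemma~\ref{lem: first return}, with the only cosmetic difference being that for $y=s+p-q$ you verify $f(y)=s$ (resp.\ $f(s)=y$) directly, whereas the paper simply observes that $s$ and $y=s+p-q$ have the same first return to $J$. The paper also notes the degenerate case $q=1$ alongside $l=0$, but your argument handles it without special pleading since $r=[p]_1=0$ is then an allowed difference for a length-$0$ progression.
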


\noindent
Note that if $y=b$, then $b \le s+p-q$, by the running assumption that $J = [y,y+q) \subset I$.
On any interval $J \subset I$ of width $q$, $S \cap J$ consists of the images of {\em some} point under the first number of iterates of $f_{\pm r,J}$.
However, this point is not always its left endpoint, which is part of the important content of Lemma \ref{lem: dynamics}.

\noindent
{\em Example.} Take $I =[0,33)$, $q=10$, and $l=11$.  We obtain the affine progression $S \subset I$ shown here:

\begin{center}
\includegraphics[width=5in]{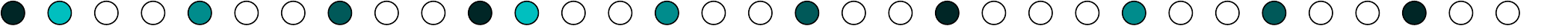}
\put(-362,10){$0$}
\put(-350,10){$1$}
\put(-317,10){$4$}
\put(-287,10){$7$}
\put(-258,10){$10$}
\put(-245,10){$11$}
\put(-215,10){$14$}
\put(-183,10){$17$}
\put(-149,10){$20$}
\put(-106,10){$24$}
\put(-73,10){$27$}
\put(-43,10){$30$}
\end{center}

\noindent
We shade the points of $S$ according to how many times we have to ``carry" 0 on iterating $f_{I,10}$.
In this example, $b = 11$ and $r=3$.
We see that $S$ meets
\begin{itemize}
\item
$[s,s+q)=[0,10)$ in an affine progression with difference $-3$;
\item
$[p-q,p) = [23,33)$ in an affine progression with difference $-3$ where $s \notin S$; and
\item
$[b,b+q) = [11,21)$ in an affine progression with difference $+3$.
\end{itemize}
The length is 3 in each case.

\begin{proof}
If $l=0$, then the restriction to any subinterval of $I$ is affine with length $0$, and we can set the difference as desired. 
If $q = 1$, then $S \cap J$ is an affine progression with length $0$, so we can set the difference to be $r=0$, as claimed.  We assume then that $l>1, q >1$. For the proof, we treat the case $\epsilon = +1$ for ease of notation; the case $\epsilon = -1$ follows with minor changes of notation.

We will identify $S \cap J \smallsetminus \{y\}$ with the set of images of $y$ under the first $k=|S \cap J \smallsetminus \{y\}|$ iterates of $f_{J,-r}$, when $y = s$ or $y+q=p$, and its inverse $f_{J,r}$, when $y = b$.

First, suppose that $y=s$.
Since $S \smallsetminus \{y\}$ is the set of images of $y$ under the first $l$ iterates of $f_{I,q}$, Lemma \ref{lem: first return} shows that $S \cap J$ takes the stated form.

Next, suppose that $y + q = s+p$.
Observe that the first return of $s$ to $J$ and of $y$ to $J$ under $f_{I,q}$ are the same.   Applying Lemma \ref{lem: first return} to $y$ shows that the first return of $s$ to $J$ is the same as the first iterate of $y$ under $f_{J,-r}$.
The desired conclusion about $S \cap J$ now follows as before from Lemma \ref{lem: first return}. 

Lastly, suppose that $y = b \le s+p-q$. In this case, we recognize $S \smallsetminus \{b,s\}$ as the set of images of $b$ under the $l-1$ iterates of $f^{-1}_{I,q} = f_{I,-q}$. Lemma \ref{lem: first return} now shows that $S \cap J \smallsetminus \{y\}$ is the set of images of $y$ under the first $k$ iterates of $f_{J,r}$.
\end{proof}

The following simple Lemma expresses the periodicity of an affine progression.
It is used to establish a basic property of $S$-jump functions in Lemma \ref{lem: minima}.

\begin{lem}
\label{lem: pattern}
Suppose that $S \subset I = [s,s+p)$ is an affine progression of difference $q$ and break point $b$, and suppose that $x,x+q \in I$.
\begin{enumerate}
\item
If $x \ne s,b$, then $x \in S \iff x+q \in S$.
\item
There is a $c$ such that
\begin{equation}
\label{eq: c points}
|S \cap (x,x+q]| = 
\begin{cases}
c, & x < b \\
c-1, & x \ge b.
\end{cases}
\end{equation}
\end{enumerate}
\end{lem}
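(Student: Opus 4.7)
My plan is to handle both parts by recognizing the shift $x \mapsto x+q$ within $I$ as an instance of the cycle-by-$q$ map $f = f_{I,q}$ of \eqref{e:shift}. The key observation is that when $x + q \in I$ one has $[x - s + q]_p = x - s + q$, so $f(x) = x + q$ without any wraparound; similarly $f^{-1}(y) = y - q$ whenever $y - q \in I$. Since $S \setminus \{s\} = \{f(s), f^2(s), \ldots, f^l(s)\}$ with $f^l(s) = b$, the orbit structure of $S$ is preserved by $f$ except at the ends $s$ and $b$.

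For part (1), if $x \in S$ and $x \neq s, b$, then $x = f^i(s)$ for some $1 \leq i \leq l-1$, so $x + q = f(x) = f^{i+1}(s) \in S$. Conversely, if $x \notin S$ but $x + q \in S$, I would write $x + q = f^j(s)$ for some $1 \leq j \leq l$ and apply $f^{-1}$ to obtain $x = f^{j-1}(s)$, which is either $s$ or lies in $S \setminus \{s\}$, contradicting the hypotheses on $x$.

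For part (2), I would study the discrete derivative of $h(x) := |S \cap (x, x+q]|$ on the integer range $x \in [s, s+p-q-1]$. The telescoping identity $h(x+1) - h(x) = \mathbf{1}_{x+q+1 \in S} - \mathbf{1}_{x+1 \in S}$ together with part (1) applied to $y = x+1$ shows that this difference vanishes unless $y \in \{s, b\}$, and $x + 1 > s$ rules out the former. The only potential jump thus occurs at $x = b - 1$; there $b \in S$ while $b + q = f^{l+1}(s)$ lies outside $\{s, f(s), \ldots, f^l(s)\} = S$ (the condition $b + q \in I$ forces $l \leq p - 2$, so that $f^{l+1}(s)$ is a fresh point in the full $p$-cycle). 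Thus $h$ drops by exactly one at this step and is otherwise constant, giving the required form with $c = h(s)$.

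The main subtlety to watch for is the degenerate cases. When $l = 0$, the set $S$ is at most $\{s\}$, so $h \equiv 0$ and the ``$x < b$'' regime is empty; the formula holds with $c = 1$. When $l = p-1$, one computes $b = s + p - q$ and $b + q = s + p \notin I$, so the putative transition at $x = b - 1 = s + p - q - 1$ falls off the end of the range; however, ``$x \geq b$'' is then vacuous within $[s, s+p-q-1]$, so the formula again holds with $c = h(s)$. These boundary verifications, together with the single-jump structure established above, complete the argument.
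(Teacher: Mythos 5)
Your proof is correct and follows essentially the same route as the paper: part (1) by observing that the map $x \mapsto x+q$ restricted to points with $x+q \in I$ is literally $f_{I,q}$ applied without wraparound, and part (2) by tracking the single jump of $h(x) = |S \cap (x,x+q]|$, which by part (1) can only occur when $x+1 = b$, together with the observation that $b \in S$ while $b+q \notin S$. The paper's own proof of (2) is the same induction but presented very tersely (``proceeds by induction on $x \ge s$, using part (1), and noting that if $b + q \in I$, then $b + q \notin S$''). Two small remarks. First, your explicit case split covers $l=0$ and $l=p-1$ but not the other values of $l$ for which $b > s+p-q-1$; these are covered by the same ``transition falls off the end of the range'' reasoning you used for $l=p-1$, so it would be cleaner to phrase the fallback case as ``$b-1$ outside $[s,s+p-q-2]$'' rather than ``$l=p-1$''. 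Second, for $l=0$ you give $c=1$, whereas the paper's proof asserts $c=0$ works; with the paper's convention $b=s$ in that degenerate case, your value $c=1$ is the correct one, since the range then forces $x \ge b$ throughout and $h \equiv 0$, so the paper's $c=0$ appears to be a harmless slip.
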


\begin{proof}
(1)($\implies$)
Suppose that $x \in S$.
Since $x+q \in I$, $x+q = f_{I,q}(x)$.
Since $x \ne b$, $f_{I,q}(x) \in S$.
Thus, $x+q \in S$.

\noindent
(1)($\impliedby$)
Suppose that $x+q \in S$.
Since $x \in I$, $f_{I,q}^{-1}(x+q) = x$.
Since $x \ne s$, $f_{I,q}^{-1}(x+q) \in S$.
Thus, $x \in S$.

\noindent
(2)  If the length of $S$ is $0$ then $c=0$ works. So assume the length of $S$ is not $0$. Take $c = |S \cap (s,s+q]|$. Then $s < b$ and the expression holds for $s$. The proof proceeds by induction on $x \ge s$, using part (1), and noting that if $b+q \in I$, then $b+q \notin S$. 
\end{proof}


\subsection{Minima of jump functions: preparation.}
\label{ss: minima prep}

Suppose that $S$ is an affine progression in $I = [s,s+p)$ and that $\theta > t$ are relatively prime positive integers.
An $S$-{\bf jump function} (with parameters $\theta$, $t$) is a function $g : \bZ \to \bZ$ with the properties that
\[
g(x) - g(x-1) \in \{t,t-\theta\}, \quad \forall x \in \bZ,
\]
and
\[
g(x) - g(x-1) = \begin{cases} t, & x \notin S, \\ t - \theta, & x \in S, \end{cases}
\]
for all $s \le x < s+p$.
Note that the values of $g$ are otherwise unconstrained.
If $J \subset I$ is a subinterval for which $S \cap J$ is an affine progression, then an $S$-jump function is also an $(S \cap J)$-jump function with the same parameters $\theta$, $t$.

The principal example is the $p$-periodic function $g:\bZ \to \bZ$ from Subsection \ref{ss: notation} of the Introduction.
It is an $S$-jump function for the affine progression $S$ over the interval $I=[0,p)$ and parameters $\theta$ and $t$ introduced there.
The one sensitive point is to check that $g(-1)=g(p-1)$, in conformity with $p$-periodicity and the defining relations that $g$ obeys on $\{-1,0,\dots,p-1\}$.
To verify it, express $g(p-1)-g(-1)$ as the telescoping sum $\sum_{x=0}^{p-1} (g(x)-g(x-1))$.
The sum consists of $|S|$ terms equal to $t-\theta$ and $p-|S|$ terms equal to $t$.
Since $|S| = l$ and $t = l \theta / p$ in the notation of Subsection \ref{ss: notation}, the sum becomes $l(t-\theta) + (p-l)t = 0$, as desired.

Recall from the end of Section \ref{sec:findingmins} that for a function $g : \bZ \to \bZ$ bounded from below, $m(g) = \{ x \in \bZ \, | \, g(x) = \min(g) \}$ denotes the set of minimizers of $g$.
Our goal in this and the next several subsections is to describe conditions under which we may pass from an interval $I$ containing elements of $m(g)$ to a proper subinterval $J \subset I$ containing $m(g) \cap I$.
The technical lemmas that we establish enable an inductive argument for pinpointing $m(g)$ in the proof of Theorem \ref{thm: minimizer}.
For instance, the following simple result characterizes $m(g)$ for two extreme types of affine progression.
 
\begin{lem}
\label{lem:length0}
Let $I = [s,s+p)$ and let $S$ be an affine progression in $I$ with length $l$. 
Assume that $m(g) \cap I \neq \emptyset$.
If $l=0$, then $m(g) \cap I = \{ s \}$, while if $l=p-1$, then $m(g) \cap I = \{ s+p-1 \}$. 
\end{lem}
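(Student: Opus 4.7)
The plan is to observe that in each of the two extremal cases the finite sequence $g(s), g(s+1), \ldots, g(s+p-1)$ is strictly monotone, so that $g|_I$ attains its minimum at a unique point; the hypothesis $m(g) \cap I \ne \emptyset$ then forces that point to lie in $m(g)$.

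For $l = 0$, I would simply observe that $S \setminus \{s\}$ is empty, so no integer $x$ with $s < x < s+p$ belongs to $S$. The defining relation of an $S$-jump function then gives $g(x) - g(x-1) = t > 0$ for every such $x$, so $g$ is strictly increasing on the integers of $I$. The unique minimizer of $g|_I$ is therefore $s$, and $m(g) \cap I = \{s\}$.

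For $l = p-1$, the key input is that since $\gcd(p,q) = 1$, the residues $[\epsilon q]_p, [2\epsilon q]_p, \ldots, [(p-1)\epsilon q]_p$ are a permutation of $\{1, 2, \ldots, p-1\}$. Hence $S \setminus \{s\} = \{s+1, s+2, \ldots, s+p-1\}$, so every integer $x$ with $s < x < s+p$ lies in $S$ and the jump relation gives $g(x) - g(x-1) = t - \theta < 0$. Thus $g$ is strictly decreasing on the integers of $I$, its unique minimizer on $I$ is $s+p-1$, and $m(g) \cap I = \{s+p-1\}$.

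There is no real obstacle: both statements are immediate once one unwinds the defining properties of an $S$-jump function and uses the fact that $\gcd(p,q) = 1$ to identify $S$ in the extremal case.
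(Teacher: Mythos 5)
Your proof is correct and takes essentially the same route as the paper's: identify $S$ in each extremal case, observe that the jump function is strictly monotone on $I$, and use $m(g) \cap I \ne \emptyset$ to conclude. You spell out a bit more explicitly why $S$ fills $I \setminus \{s\}$ when $l = p-1$ (using $\gcd(p,q)=1$), which the paper leaves implicit.
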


\begin{proof} 
If $l=0$, then $(I - \{ s \}) \cap S = \emptyset$.
Thus, $g$ is increasing on $I$, and $m(g) \cap I = \{ s \}$.
If instead $l=p-1$, then $(I - \{ s \}) \cap S = I - \{ s \}$.
Thus, $g$ is decreasing on $I$, and $m(g) \cap I = \{ s+p-1 \}$.
\end{proof}

Suppose that $S$ is an affine progression in $I = [s,s+p)$ with difference $\pm q$ ($q>0$) and $g$ is an $S$-jump function.
We say that $I$ is $q$-{\bf isolated} for $g$ if $m(g) \cap [s-q,s+p+q) = m(g) \cap I$: that is, no additional minima of $g$ appear within a $q$-neighborhood of $I$.
Let $S'$ be an affine progression in $J=[w,z)$, and let $g$ be an $S'$-jump function with parameters $t,\theta$. 
We say that
\begin{itemize}
\item
$J$ is {\bf left-like} if $g(z)-g(w)>0$ and $g(z)-g(z-1)=t-\theta$, and
\item
$J$ is {\bf right-like} if $g(z-1)-g(w-1)<0$ and $g(w)-g(w-1) = t$.
\end{itemize}
These conditions are essential for the inductive argument we carry out for locating the minima of an $S$-jump function, and the terminology stems from Lemma \ref{lem: minima} below.
Note that an interval $J$ may be left-like, right-like, neither, or both. Note also that the second condition of $J$ being right-like is equivalent to $w \notin S'$ by the defining condition of an $S'$-jump function. We will often think of the second condition of right-likeness in this way.
In what follows, the affine progression $S'$ in $J$ will typically come from the restriction to $J$ of an affine progression $S$ of a larger interval $I$ (that is, $J \subset I$, $S'=S \cap J$,  and $g$ an $S$-jump function with the same parameters $t,\theta$).
In that case, the left-like condition that $g(z)-g(z-1)=t-\theta$ follows if $z \in S$.
Typically, this will be the way in which this condition is verified.

Suppose that $S$ is an affine progression in $I$, and suppose that $g$ is an $S$-jump function that attains its minimum in $I$.
The next result identifies a subinterval $J \subset I$ that contains $m(g) \cap I$ and to which $S$ restricts to an affine progression with particular qualities.
It explains the use of our terminology ``left-like" and ``right-like", since they convey where $J$ is located within $I$.

\begin{lem}
\label{lem: minima}
Let $I = [s,s+p)$, let $S$ be an affine progression in $I$ with difference $q>0$ and length $l$, and let $g$ be an $S$-jump function on $I$ with parameters $t,\theta$.
Set $r=[p]_q$.
Assume that $m(g) \cap I \neq \emptyset$. 
If $\theta \nmid  q$, then either
\begin{enumerate}
\item
$m(g) \cap I \subset [s,s+q)$, $[s,s+q)$ is left-like, and $[s,s+q) \cap S$ is an affine progression with difference $-r$;
\item
$m(g) \cap I \subset [b,b+q) \subset I$, and $[b,b+q) \cap S$ is an affine progression with difference $r$;
\item
$m(g) \cap I \subset [s+p-q,s+p)$, $[s+p-q,s+p)$ is right-like, and $[s+p-q,s+p) \cap S$ is an affine progression with difference $-r$; or
\item 
$l=0$ and $m(g) \cap I =\{s\} \subset [s, s+q)$ and $[s,s+q) \cap S$ is an affine progression with difference $r$ and length $0$.
\end{enumerate}
If $I$ is $q$-isolated for $g$, then the corresponding subinterval above is $r$-isolated for $g$.

{\em Addendum:} Let $b$ be the break point of the affine progression $S \cap I$.
If $b > s+p-q$, then either
\begin{itemize}
\item $g(x+q)-g(x)>0$ for $s \leq x <s+p-q$ and (1) holds; or
\item $g(x+q)-g(x)<0$ for $s \leq x <s+p-q$ and (3) holds.
\end{itemize}

\end{lem}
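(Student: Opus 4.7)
The plan is to introduce the discrete slope $h(x) = g(x+q) - g(x)$ defined for $x \in [s, s+p-q)$ and locate $m(g) \cap I$ from the sign behavior of $h$. First I would compute
\[
h(x) = qt - \theta \cdot |S \cap (x, x+q]|,
\]
invoking Lemma \ref{lem: pattern}(2) to identify this cardinality as $c = |S \cap (s, s+q]|$ when $x < b$ and $c-1$ when $x \ge b$. Setting $A = qt - c\theta$, this gives $h(x) = A$ on $[s, b)$ and $h(x) = A + \theta$ on $[b, s+p-q)$. The hypothesis $\theta \nmid q$, together with $\gcd(t,\theta)=1$, forces $\theta \nmid qt$, so both $A$ and $A+\theta$ are nonzero.

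The case $l = 0$ handles option (4) directly, since then $g$ is strictly increasing on $I$ and attains its minimum at $s$. For $l \ge 1$ I would split into three cases according to the sign behavior of $h$. If $A > 0$, so $h > 0$ throughout its domain, then any $x^* \in m(g) \cap I$ with $x^* \ge s+q$ would satisfy $g(x^*-q) < g(x^*)$, a contradiction; hence $m(g) \cap I \subset [s, s+q)$, left-likeness follows from $h(s) = A > 0$ and $s+q \in S$, and option (1) holds. If $A < 0 < A+\theta$ and $b < s+p-q$, so $h$ attains both signs on its domain, then the comparisons $g(x^*) \le g(x^* \pm q)$ rule out both $x^* < b$ and $x^* \ge b+q$, yielding $m(g) \cap I \subset [b,b+q) \subset I$, and option (2) holds. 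The remaining case gives $h < 0$ throughout, and a symmetric argument yields $m(g) \cap I \subset [s+p-q, s+p)$; if $b = s+p-q$ (equivalently $l=p-1$) this interval coincides with $[b,b+q)$ and option (2) applies, while in every other subcase $s+p-q \notin S$, so right-likeness holds and option (3) applies.

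In each case the claimed affine structure of $S$ restricted to the chosen subinterval comes directly from Lemma \ref{lem: dynamics}, with $y \in \{s, b, s+p-q\}$ chosen to match the subinterval, which simultaneously dictates whether the new difference is $+r$ or $-r$. The $q$-isolation statement transfers to $r$-isolation since $r < q$ and the chosen subinterval lies inside $I$: any purported additional minimum within $r$ of the subinterval sits within $q$ of $I$, hence by hypothesis within $I$, hence by the main statement within the subinterval. The addendum reduces to the observation that $b > s+p-q$ forces $h$ to be constantly $A$ on its domain, collapsing the trichotomy to the sign of $A$ alone and yielding options (1) or (3).

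The main obstacle is the borderline $b = s+p-q$ (equivalently $l=p-1$): there option (3) fails because $s+p-q$ lies in $S$, and one must instead route through option (2), invoking Lemma \ref{lem: dynamics} with $y = b$ (giving difference $+r$) rather than $y = s+p-q$ (which would give $-r$). Keeping this distinction consistent across the case analysis, together with the degenerate endpoints $l = 0$ and $l = p-1$, is the bookkeeping that carries the argument through.
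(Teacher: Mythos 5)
Your proposal is correct and takes essentially the same approach as the paper: you isolate the quantity $g(x+q)-g(x)$ via Lemma \ref{lem: pattern}(2), split on the sign of $A = qt-c\theta$ (the paper's $\sigma$) and on the position of $b$, and invoke Lemma \ref{lem: dynamics} to read off the restricted affine structure, with $q$-isolation descending to $r$-isolation because $r<q$ and the chosen subinterval contains all of $m(g)\cap I$. The only organizational difference is that the paper dispatches both degenerate lengths $l=0$ and $l=p-1$ at the outset via Lemma \ref{lem:length0}, whereas you handle $l=0$ up front (routing to option (4) rather than the paper's choice of (2), both legitimate since they coincide when $l=0$) and fold $l=p-1$ into the third branch of the sign analysis; both resolutions of that borderline case land correctly in option (2).
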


\noindent
{\em Example.} 
Consider once more the arithmetic progression $S \subset I = [0,33)$ of difference 10 and length 11, and let $g$ be the 33-periodic $S$-jump function.
We report three choices for $(\theta,t)$, one corresponding to each of the outcomes (1)-(3) of Lemma \ref{lem: minima}, and in each case the value $\min(g)$, the set $m(g) \cap I$, and the guaranteed subinterval:
\begin{enumerate}
\item
$(\theta,t)=(3,2)$: $\min(g)=-1$, $m(g) = \{1\} \subset [0,10)$;
\item
$(\theta,t) = (3,1)$: $\min(g) =-4$, $m(g) = \{11,14,17,20\} \subset [11,21)$;
\item
$(\theta,t)=(4,1)$: $\min(g)=-14$, $m(g) = \{ 30 \} \subset [23,33)$.
\end{enumerate}

\begin{proof}
If $l=0$, then Lemma~\ref{lem:length0} shows that $m(g) \cap I =\{s\} \subset [s,s+q)=[b,b+q)$. We can take $[b,b+q) \cap S$ as an affine progression with difference $r$ and length $0$. Thus conclusion $(2)$ of the Lemma is satisfied. If $l=p-1$, and hence $b=s+p-q$, then Lemma~\ref{lem:length0} shows that  $m(g) \cap I =\{s+p-1\} \subset [s+p-q,s+p)=[b,b+q)$.
By Lemma~\ref{lem: dynamics}, $[b,b+q) \cap S$ is an affine progression with difference $r$ (and length $q-1$).
This is again conclusion $(2)$. 
We hereafter assume $0 < l < p-1$.
By Lemma \ref{lem: pattern}(2), $S$ meets every interval $(x,x+q] \subset I$ in the same number of points $c$ for $x < b$ and in the same number of points $c-1$ for $x \ge b$.
Setting $\sigma = (q-c)t + c(t - \theta) = q t - c \theta$, we therefore have
\[
g(x+q) - g(x) = \begin{cases} \sigma, & s \le x < b; \\ \sigma + \theta, & b \le x < s+p-q. \end{cases}
\]
Since $\theta \nmid q$ and $\gcd(t,\theta) = 1$, we have $\sigma, \sigma + \theta \ne 0$.
There are three possibilities to consider, each of which leads to the corresponding numbered outcome of the Lemma:

\begin{enumerate}
\item
If $\sigma > 0$, then $g(x+q) > g(x)$ for all $x,x+q \in I$.
It follows in this case that $m(g) \cap I \subset [s,s+q)$ and that $g(s+q)-g(s) > 0$.
By Lemma~\ref{lem: dynamics}, $[s, s+q) \cap S$ is an affine progression with difference $-r$.
Since $l > 0$, we have $s+q \in S$, and $[s,s+q)$ is left-like. 
\smallskip
\item
If $\sigma < 0 < \sigma+\theta$, then $g(x+q) < g(x)$ for all $x, x+q \in I$, $x < b$, while $g(x+q) > g(x)$ for all $x,x+q \in I$, $x \ge b$. 
Thus, $m(g) \cap I \subset [b,b+q)$. By Lemma~\ref{lem: dynamics}, $[b,b+q)$ is an affine progression with difference $r$. As noted above, this includes the cases when $l=0,p-1$. It also includes the case that $q=1$ (with $r=0$). 
\item
If $\sigma+\theta < 0$, then $g(x+q) < g(x)$ for all $x,x+q \in I$.
It follows in this case that $m(g) \cap I \subset [s+p-q,s+p)$ and that $g(s+p-1) - g(s+p-q-1) < 0$. If $s+p-q \in S$, then $I \cap S$ has length $p-1$, which is assumed not to be the case.
Thus $s+p-q \notin S$. Applying Lemma~\ref{lem: dynamics}, 
we conclude that $[s+p-q,s+p) \cap S$ is a right-like affine progression with difference $-r$. 

\smallskip
\end{enumerate}
Since each of these subintervals contains all of $m(g) \cap I$, which is non-empty, and since $r < q$, the $q$-isolation of $I$ for $g$ immediately implies the $r$-isolation of the subinterval for $g$.

To verify the Addendum, let $b > s+p-q$ be the break point of the affine progression. Then $g(x+q) - g(x) =  \sigma$ for $s \leq x <s+p-q$. That is, 
$g(x+q) - g(x) =  \sigma$ for $x, x+q \in I$. Note that $b > s+p-q$ implies $l \neq 0, p-1$.
If $\sigma > 0$, apply the argument of possibility $(1)$ above to reach conclusion $(1)$. If $\sigma < 0$, apply the argument of possibility $(3)$ above to reach conclusion $(3)$.
\end{proof}


\subsection{Minima of jump functions: $\theta \nmid q$.}
\label{ss: minima does not divide}

As in the previous subsection, let $S$ be an affine progression on the interval $I=[s,s+p)$ with difference $\epsilon q$ with $\epsilon= \pm 1$ and $q>0$. Let $g$ be an $S$-jump function with parameters $\theta , t$. Under the assumption that the parameter $\theta$ does not divide the difference $\epsilon q$, 
Lemma \ref{lem: minima} provides the basis for an inductive argument for locating the minima of $g$ within $I$ when the difference is positive ($q$). 
This subsection is devoted to the case when the difference is negative ($-q$), again under the assumption that $\theta$ does not divide the difference, treated in Lemma \ref{lem: minima 2-2} below. When the difference is negative, we need to make the extra assumption that the interval is either left-like or right-like. 
In order to prove Lemma \ref{lem: minima 2-2}, we require the following version of Lemma \ref{lem: pattern}, whose proof is analogous (and is one place in this subsection that does not require that $\theta \nmid q$).

\begin{lem}
\label{lem: pattern 2-2}
Suppose that $S \subset I = [s,s+p)$ is an affine progression of difference $-q$ and break point $b$.
\begin{enumerate}
\item
If $x,x-q \in I$ and $x \ne s+q,b$, then $x \in S \iff x-q \in S$.
\item
If $x,x+(p-q) \in I$ and $x \ne s,b$, then $x \in S \iff x+(p-q) \in S$.
\item
There is a $c$ with the following property. If $x,x-q \in I$ then
\[
\label{eq: c'' points}
|S \cap (x-q,x]| = 
\begin{cases}
c, & x < b \\
c+1, & x \ge b. 
\end{cases}
\]
\end{enumerate}
\end{lem}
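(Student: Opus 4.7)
The plan is to mirror the proof of Lemma~\ref{lem: pattern} by re-expressing everything in terms of the cycle map $f = f_{I,-q}$. Recall that $S$ is the union of $\{s\}$ (which may or may not actually belong to $S$) with the forward $f$-orbit $\{f(s),\ldots,f^l(s) = b\}$, and that this orbit has exactly two ``edges'': applying $f$ to $b$ leaves $S$, while applying $f^{-1}$ to $f(s)$ lands at $s$, whose membership in $S$ is an independent bit.

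For (1), the hypothesis $x, x-q \in I$ pins $x$ to $[s+q, s+p)$, on which $f(x) = x-q$. The forward implication $x \in S \Rightarrow x-q \in S$ is the assertion $f(x) \in S$, valid unless $x = b$; the reverse implication is $f^{-1}(x-q) \in S$, valid unless $x-q = s$, i.e.\ $x = s+q$. Part (2) is strictly parallel: the hypothesis forces $x \in [s, s+q)$, where $f(x) = x + (p-q)$; the two exceptions occur at $x = b$ for the forward direction and at $x = s$ for the reverse direction, again because $f^{-1}(f(s)) = s$ need not lie in $S$.

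For (3), I would argue by a sliding-window comparison using (1). As $x$ advances by one through $[s+q, s+p)$, the count $|S \cap (x-q, x]|$ changes by the indicator of $x \in S$ minus the indicator of $x-q \in S$; by (1) this vanishes unless $x \in \{s+q, b\}$. Since $x = s+q$ sits at the left endpoint of the range, the count is locally constant on $[s+q, s+p)$ with at most a single jump at $x = b$. When $b \ge s+q$, that jump evaluates to $1 - 0 = +1$, since $b \in S$ while $b-q = f(b) \notin S$; letting $c$ be the constant value on $[s+q, b)$ then gives the stated formula, while the case $b < s+q$ collapses to the constant value $c+1$ throughout.

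The main subtlety is keeping track of the degenerate extremes $l = 0$ and $l = p-1$, where $S$ is essentially trivial and the break point $b$ sits at or against the boundary so that the ``$s+q$'' and ``$b$'' exceptions may collide; these can be handled as degenerate specializations of the general argument, exactly as at the start of the proof of Lemma~\ref{lem: pattern}.
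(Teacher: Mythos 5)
Your proposal is correct and mirrors the paper's own argument: parts (1) and (2) are verified exactly by casting membership in $S$ in terms of the cycle map $f_{I,-q}$ and noting the two exceptional points $b$ and $s$, and part (3) follows from the same telescoping comparison between consecutive windows $(x-q,x]$ and $(x-q+1,x+1]$ using part (1), with a single $+1$ jump at $x+1=b$ because $b\in S$ while $f(b)\notin S$. The only (harmless) imprecision is the phrase ``when $b\ge s+q$'': when $b=s+q$ (equivalently $l=p-1$) one has $f(b)=s$, which may lie in $S$, but this jump falls outside the admissible range of $x$ anyway, so the formula still holds with all windows giving the constant value.
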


\begin{proof}

The proof of part $(1)$ is the same as for Lemma \ref{lem: pattern}(1).

For part $(2)$, note that  $x,x+(p-q) \in I$ means that $f_{I,-q}(x)=x+(p-q)$. Thus $x \in S$ means that $x+(p-q) \in S$ if $x \neq b$. If $x \neq s$, then $x+p-q \in S$ means $x \in S$. 

To prove (3), assume $x,x-q, x+1, x-q+1 \in I$. Note that since $x-q \geq s$, $x+1 \neq s+q$. If $x+1 <b$ then $|S \cap (x-q,x]|=|S \cap (x-q+1,x+1]|$ by part $(1)$. The same holds if $x \geq b$. Thus we consider the case when $x+1=b$.  By definition of $b$, $x-q+1 \notin S$, but $x+1 \in S$. That is, $|S \cap (x-q,x]|+1=|S \cap (x-q+1,x+1]|$. Together, these verify $(3)$.

\end{proof}

\begin{lem}
\label{lem: minima 2-2}
Let $I= [s,s+p)$, and let $S$ be an affine progression in $I$ with difference $-q<0$.
Assume that $I$ is either left-like or right-like.
Set $r=[p]_q$.
Assume that $m(g) \cap I \ne \emptyset$. 
If $\theta \nmid  q$, then either
\begin{enumerate}
\item
$m(g) \cap I \subset [s,s+q)$, and $[s,s+q) \cap S$ is an affine progression with difference $r$; or
\item
$m(g) \cap I \subset [s+p-q,s+p)$, and $[s+p-q,s+p) \cap S$ is an affine progression with difference $r$.
\end{enumerate}
If $I$ is $q$-isolated for $g$, then the subinterval from (1) or (2) above is $r$-isolated for $g$.
\end{lem}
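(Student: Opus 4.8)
The plan is to mirror the proof of Lemma~\ref{lem: minima}, substituting Lemma~\ref{lem: pattern 2-2} for Lemma~\ref{lem: pattern}. First I would dispose of the degenerate cases $l\in\{0,p-1\}$ and $q=1$ exactly as in Lemma~\ref{lem: minima}: Lemma~\ref{lem:length0} identifies $m(g)\cap I$ as a single endpoint of $I$, which lies in $[s,s+q)$ or in $[s+p-q,s+p)$, and Lemma~\ref{lem: dynamics} shows the restricted progression there has difference $r$. So assume $0<l<p-1$ and $q\ge 2$, whence $1\le r\le q-1$.

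Next comes the main computation. By Lemma~\ref{lem: pattern 2-2}(3) there is a constant $c$ with $|S\cap(x-q,x]|=c$ for $s+q\le x<b$ and $=c+1$ for $b\le x<s+p$. Telescoping an $S$-jump function over these $q$ consecutive jumps gives
\[
g(x)-g(x-q)=\begin{cases} \sigma', & s+q\le x<b,\\ \sigma'-\theta, & b\le x<s+p,\end{cases}
\]
with $\sigma'=qt-c\theta$; since $\theta\nmid q$ and $\gcd(t,\theta)=1$, both $\sigma'$ and $\sigma'-\theta$ are nonzero. Three sign regimes arise. If $\sigma'-\theta>0$, then $g$ is strictly increasing in steps of $q$ throughout $I$, so $m(g)\cap I\subset[s,s+q)$, which is outcome (1); if $\sigma'<0$, then $g$ is strictly decreasing in steps of $q$, so $m(g)\cap I\subset[s+p-q,s+p)$, outcome (2). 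In both of these regimes Lemma~\ref{lem: dynamics} (applied with $\epsilon=-1$ and $y=s$, resp.\ $y=s+p-q$) shows the relevant restriction is an affine progression of difference $r$, and neither argument uses left- or right-likeness. The remaining regime $0<\sigma'<\theta$ is the crux.

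In that middle regime $g$ increases in steps of $q$ on $\{x<b\}$ and decreases in steps of $q$ on $\{x\ge b\}$. Partitioning $I$ into the $q$ residue classes mod $q$ — each an arithmetic progression of common difference $q$ whose least element lies in $[s,s+q)$ and whose greatest lies in $[s+p-q,s+p)$, these being the $q$ leftmost and $q$ rightmost integers of $I$ — the restriction of $g$ to each class rises then falls, so attains its minimum at one of the two ends. Hence $m(g)\cap I\subset[s,s+q)\cup[s+p-q,s+p)$, and it remains to exclude one piece; this is precisely where the hypothesis that $I$ is left-like or right-like enters. I would show that left-likeness ($g(s+p)>g(s)$ together with $g(s+p)-g(s+p-1)=t-\theta$) forces $\min_{[s+p-q,s+p)}g>\min_{[s,s+q)}g$, hence outcome (1), and that right-likeness ($g(s+p-1)-g(s-1)<0$ and $s\notin S$) symmetrically forces outcome (2). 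Concretely, I expect to compare $g$ at the greatest element $x^{*}$ of each residue class with its value at the least element $x^{*}-k_xq$ ($k_x\in\{d-1,d\}$) by telescoping across the class, then aggregate the $q$ resulting comparisons using the single inequality $g(s+p)>g(s)$ and the known count of $S$-jumps across $[s,s+p)$; the clause about the jump at $s+p$ is what lets one align that count (equivalently, pass to the shifted interval $[s+1,s+p+1)$) so the bookkeeping closes. The only delicate point I anticipate is correctly tracking whether the endpoints $s$, $b$, $s+p$ lie in $S$; the rest is a direct transcription of the corresponding steps of Lemma~\ref{lem: minima}. Finally, since $r<q$ and the chosen subinterval contains the nonempty set $m(g)\cap I$, $q$-isolation of $I$ for $g$ immediately gives $r$-isolation of that subinterval.
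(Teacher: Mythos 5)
Your opening (degenerate cases, the $\sigma$-computation via Lemma~\ref{lem: pattern 2-2}(3), the three sign regimes, and the final $r$-isolation remark) matches the paper's proof. The gap is in the middle regime $\sigma-\theta<0<\sigma$, which is exactly where the left-/right-like hypothesis matters, and there your proposal is a sketch rather than a proof. Your residue-class observation correctly gives $m(g)\cap I\subset[s,s+q)\cup[s+p-q,s+p)$, but the step ``aggregate the $q$ resulting comparisons using the single inequality $g(s+p)>g(s)$'' is not carried out, and it is not obviously closeable: the greatest and least elements of a residue class sit $k_xq$ apart with $k_x\in\{d-1,d\}$ depending on the class, so the $q$ telescoped differences are not equal term-by-term and do not sum to anything that compares cleanly with the one datum $g(s+p)-g(s)$. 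Moreover the claim that the remainder is ``a direct transcription of the corresponding steps of Lemma~\ref{lem: minima}'' is misplaced: the proof of Lemma~\ref{lem: minima} never uses left-/right-likeness as a hypothesis (those appear only as conclusions there), so there is no corresponding step to transcribe.

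The paper's route through this regime is different and uses an ingredient you do not mention: Lemma~\ref{lem: pattern 2-2}(2) shows that $g(x+(p-q))-g(x)$ is \emph{constant} on $[s,s+q)$ (since $b>s+q$ here), collapsing the $q$ class-by-class comparisons into a single sign determination; if that constant is positive one gets (1), if negative one gets (2), and left-/right-likeness is invoked only to rule out the constant being zero, via the identity \eqref{eq: q difference-2}. Your intended stronger conclusion --- that left-likeness forces (1) outright and right-likeness forces (2) --- is in fact true (one can check it by redoing the paper's computation of $g(s+p)-g(s)$ without assuming the constant is zero: left-likeness gives $\sigma-\theta+c>0$ hence $c>0$, and right-likeness gives $c<-\sigma<0$ in either jump subcase), but you have not supplied the computation, and the mechanism you propose (summing over residue classes) is not the one that makes it work. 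To repair the proof, either (a) state and use the constancy of $g(x+(p-q))-g(x)$ on $[s,s+q)$, which is Lemma~\ref{lem: pattern 2-2}(2), or (b) make precise the stronger directional claim by directly computing $g(s+p)-g(s)$ in terms of the constant $c$ and $\sigma$, tracking the jumps at $s$, $s+q$, $s+p-q$, and $s+p$.
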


\begin{proof}
[Proof of Lemma \ref{lem: minima 2-2}]
First note by Lemmas~\ref{lem: dynamics} and \ref{lem:length0} that we may assume that the length of $I \cap S$ is strictly between $0$ and $p-1$.

We begin as in the proof of Lemma \ref{lem: minima}.
Let $c$ be as in Lemma \ref{lem: pattern 2-2}(3) and set $\sigma = (q-c) t + c (t - \theta) = q t - c \theta$.
Lemma \ref{lem: pattern 2-2}(3) gives
\begin{equation}
\label{eq: r difference}
g(x) - g(x-q) = \begin{cases} \sigma, & s+q \le x < b; \\ \sigma - \theta, & b \le x < s+p. \end{cases}
\end{equation}
Note that if $b \le s+q$, then the first outcome does not occur.
Since $\theta \nmid q$ and $\gcd(t,\theta) = 1$, we have $\sigma, \sigma-\theta \ne 0$.
As in the proof of Lemma \ref{lem: minima}, if $\sigma < 0$, then we conclude that $m(g) \cap I \subset [s+p-q,s+p)$, while if $\sigma - \theta > 0$, then $m(g) \cap I \subset [s,s+q)$.

The remaining possibility is that $b > s+q$ and $\sigma - \theta < 0 < \sigma$.
In this case, \eqref{eq: r difference} only implies that $m(g) \cap I \subset [s,s+q) \cup [s+p-q,s+p)$.
We must work harder to constrain $m(g) \cap I$ to one of the two constituent intervals of width $q$.
In particular, we must invoke the left- or right-likeness of $I$.

Note that $b > s+q$ implies that $S \cap I$ has positive length and that $S$ does not hit every point of $I - \{s\}$.
Consider the difference $g(x+(p-q)) - g(x), x \in [s,s+q)$.
By Lemma \ref{lem: pattern 2-2}(2) and the fact that $b > s+q$, it follows that this value is constant on $[s,s+q)$.
If it is positive, then $m(g) \cap I \subset [s,s+q)$, while if it is negative, then $m(g) \cap I \subset [s+p-q,s+p)$, resulting in the required conclusions.

The remaining possibility is that
\begin{equation}
\label{eq: q-r difference-2}
g(x+(p-q)) - g(x) = 0, \quad \forall x \in [s,s+q),
\end{equation}
which we now assume in pursuit of a contradiction.  Using \eqref{eq: q-r difference-2}, we first establish the following comparison:
\begin{equation}
\label{eq: q difference-2}
g(s+p)-g(s) = \begin{cases} \sigma, &g(s+p)-g(s+p-1)=t; \\ \sigma - \theta, & g(s+p)-g(s+p-1)=t-\theta. \end{cases}
\end{equation}

First, suppose that $g(s+p)-g(s+p-1)=t$.
Note that $s+q \notin S$, since otherwise $S$ meets every point of $I - \{s\}$, which we already noted does not occur.
In particular, $g(s+q)-g(s+q-1)=t$.
Consequently,
\[
g(s+p)-g(s+q)=g(s+p-1)-g(s+q-1)=0,
\]
taking $x = s+q-1$ in \eqref{eq: q-r difference-2}, and so
\[
g(s+p)-g(s)= (g(s+p)-g(s+q)) + (g(s+q)-g(s)) = \sigma
\]
using \eqref{eq: r difference}.

Second, suppose that $g(s+p)-g(s+p-1)=t-\theta$.
Since $S \cap J$ has positive length, $s+p-q \in S$ and $g(s+p-q)-g(s+p-q-1)=t-\theta$.
This along with \eqref{eq: r difference} shows
\[
g(s+p)-g(s+p-q)= g(s+p-1)-g(s+p-q-1)= \sigma - \theta,
\]
and so
\[
g(s+p)-g(s)=(g(s+p)-g(s+p-q))+(g(s+p-q)-g(s)) = \sigma - \theta,
\]
taking $x = s$ in \eqref{eq: q-r difference-2}.
These two cases verify \eqref{eq: q difference-2}.

Now we invoke \eqref{eq: q difference-2} and the left- / right-like hypotheses to reach the desired contradiction.
If $I$ is left-like, then $g(s+p)-g(s+p-1)=t-\theta$ and $g(s+p) - g(s) > 0$, contradicting \eqref{eq: q difference-2}.
So assume $I$ is right-like.
Then $g(s)-g(s-1)=t$.
If $g(s+p)-g(s+p-1)=t$, then $g(s+p)-g(s) = g(s+p-1)-g(s-1)<0$ contradicting \eqref{eq: q difference-2}.
If instead $g(s+p)-g(s+p-1)=t-\theta$, then $g(s+p)-g(s) = g(s+p-1)-g(s-1)-\theta$.
On the other hand, the left side is $\sigma - \theta$ by \eqref{eq: q difference-2}, so $g(s+p-1)-g(s-1)= \sigma > 0$.  This contradicts that $I$ is right-like.

Thus we conclude that either $m(g) \cap I \subset [s,s+q)$ or $m(g) \cap I \subset [s+p-q,s+p)$.
Lemma \ref{lem: dynamics} with $\epsilon = -1$ shows that each of these intervals intersects $S$ in an affine progression with difference $r$. If $I$ is $q$-isolated for $g$, then $m(g) \cap I = m(g) \cap [s-q, s+p+q)$. Since $r <q$ whichever of these subintervals  contains $m(g) \cap I$ will be $r$-isolated for $g$. 
\end{proof}

\subsection{Minima of jump functions: $\theta \mid q$.}
\label{ss: minima divides}

As in the preceding subsections, suppose that $S$ is an affine progression in $I=[s,s+p)$ with positive length, difference 
$\pm q$, $q > 0$, and break point $b$.
Assume that $p$ and $q$ are relatively prime.
Let $r=[p]_q$, and let $r'=[q]_r$. Let $g$ be an $S$-jump function with parameters $t,\theta$.
In the preceding subsections, we often assumed that the parameter $\theta$ did not divide the difference of the affine progression.
In this subsection, we address when it does. We use the results of this subsection in the proof of Theorem~\ref{thm: minimizer} where they are applied to the case when $p=r_{i-1}, q=r_i$, and $\theta | r_i$ (note then that $\theta \nmid r= r_{i+1}$). 
We assume that $g$ has a $(p,q)$-symmetry in its minima, as defined at the end of Section \ref{sec:findingmins}.
This assumption is justified in the eventual proof of Theorem~\ref{thm: minimizer} by way of Theorem~\ref{thm:lotsofzerosg}.
Thus, this subsection is devoted to locating the minima of $g$ in the interval $I$ under the assumptions that $\theta \mid q$, hence that $\theta \nmid r$, and that $g$ has a $(p,q)$-symmetry in its minima. 

Recall that $I$ is $q$-isolated for the function $g$ if $m(g) \cap [s-q,s+p+q) = m(g) \cap I$. 
Define subintervals $I_\ell=[s+\ell q,s+ \ell q+r)$ for $\ell = 0, \dots, \lfloor p/q \rfloor$.

\begin{lem}
\label{lem:Icontainsall}
Suppose that $g$ has a $(p,q)$-symmetry in its minima and $I$ is $q$-isolated for $g$.
Then each element of $m(g) \cap I$ lies in a sequence of $\lceil p/q \rceil$ elements of $m(g)$ within $I$ that are $q$ units apart.
Each such sequence has a unique representative in $I_\ell$ for $\ell = 0, \dots, \lfloor p/q \rfloor$.
Thus, $m(g) \cap I$ is contained in the union of pairwise disjoint subintervals $I_0, \dots, I_{\lfloor p/q \rfloor}$.
If $m(g) \cap I \ne \emptyset$, then $m(g) \cap I_\ell \ne \emptyset$ for $\ell=0, \dots, \lfloor p/q \rfloor$, and $S$ does not have length 0 or $p-1$.
\end{lem}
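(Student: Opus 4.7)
The plan is to use the hypothesized $(p,q)$-symmetry to promote each element of $m(g) \cap I$ to a full arithmetic progression of minimizers, and then invoke $q$-isolation to trap that progression inside $I$. Fix $j \in m(g) \cap I$. By $(p,q)$-symmetry there is an integer $M$ such that each $a_h := M + hq$ is a minimizer for $h = 0, 1, \ldots, \lfloor p/q \rfloor$, with $j = a_{h_0}$ for some index $h_0$. The first task is to show every $a_h$ lies in $I$. Suppose $a_0 < s$. Since $a_0 \in m(g)$ and $I$ is $q$-isolated, one must have $a_0 < s - q$. Then $a_1 = a_0 + q < s$ is again a minimizer not in $I$, so by $q$-isolation $a_1 < s - q$; iterating, every $a_h < s - q$, contradicting $a_{h_0} = j \in I$. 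A symmetric argument at the right endpoint rules out $a_{\lfloor p/q \rfloor} \geq s + p$.

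Next I would locate each $a_h$ inside a particular $I_\ell$. Since the sequence lies in $I$, the top member satisfies $a_{\lfloor p/q \rfloor} = a_0 + \lfloor p/q \rfloor q \leq s + p - 1$, hence $a_0 \leq s + p - 1 - \lfloor p/q \rfloor q = s + r - 1$, so $a_0 \in [s, s+r) = I_0$. Then $a_h = a_0 + hq \in [s + hq, s + hq + r) = I_h$ for each $h$, giving the sequence a unique representative in each $I_\ell$ and showing $m(g) \cap I \subset I_0 \cup \cdots \cup I_{\lfloor p/q \rfloor}$. The pairwise disjointness of the $I_\ell$ follows from $r < q$: since $\gcd(p,q) = 1$ and $p > q$, the remainder $r = [p]_q$ lies in $\{1, \ldots, q-1\}$, so consecutive $I_\ell$'s are separated by a gap of $q - r \geq 1$.

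For the remaining conclusions, if $m(g) \cap I \neq \emptyset$, the sequence just produced witnesses $m(g) \cap I_\ell \neq \emptyset$ for every $\ell$. If $S$ had length $0$ or $p - 1$, then Lemma~\ref{lem:length0} would force $m(g) \cap I$ to be a singleton, but the $(p,q)$-symmetry together with the confinement argument above yields $\lceil p/q \rceil = \lfloor p/q \rfloor + 1 \geq 2$ distinct minimizers in $I$; this contradiction rules out those two lengths.

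The main obstacle is the endpoint bookkeeping in the confinement step: one must iterate $q$-isolation repeatedly to push a stray sequence entirely out of a $q$-neighborhood of $I$, and one must verify the arithmetic identity $\lfloor p/q \rfloor q = p - r$ so that the sequence $a_0 < a_0 + q < \cdots < a_0 + \lfloor p/q \rfloor q$ of span $p - r$ fits inside $I$ of length $p$ with exactly the slack measured by $r$. Everything else is combinatorial cleanup.
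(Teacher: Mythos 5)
Your proof is correct and follows essentially the same route as the paper's: invoke the $(p,q)$-symmetry to produce the arithmetic progression of minimizers through $j$, use $q$-isolation to trap it inside $I$, and then the span computation $\lfloor p/q\rfloor q = p-r$ forces the first term into $I_0$ and hence $a_h \in I_h$. The only cosmetic difference is that you iterate the $q$-isolation step to push a stray sequence entirely out of $[s-q,s+p+q)$, whereas a single application suffices (if the sequence crosses $\partial I$, two consecutive terms $q$ apart straddle it, producing a minimizer in the $q$-neighborhood of $I$ but outside $I$).
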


\begin{proof}
 Since $g$ has a $(p,q)$-symmetry in its minima, any minimum of $g$ in $I$ lies in a sequence of 
 $\lceil p/q \rceil$ minima that are spaced $q$ units apart.
 By the hypothesis on $q$-isolation, this whole sequence must lie in $I$.
 Any such sequence has width $q \cdot \lfloor p/q \rfloor$ units, and its last term is less than $s+p$, so its first term must be less than $s+r$.
 That is, each sequence containing an element of $m(g) \cap I$ has a representative in $[s,s+r)$, and thus in $[s+\ell q,s+\ell q+r)$, for each $\ell = 0, \dots, \lfloor p/q \rfloor$.
Furthermore, if $m(g) \cap I \ne \emptyset$, then $|m(g) \cap I| \ge \lceil p/q \rceil \ge 2$.
Hence by Lemma~\ref{lem:length0}, $S$ does not have length 0 or $p-1$, since in both of those cases $|m(g) \cap I|=1$.
\end{proof}

First we consider the case that the difference of $S$ is positive.

\begin{lem}
\label{lem:dividesqpositive}
Let $S$ be an affine progression in $I=[s,s+p)$ with difference $q>0$. Let $r=[p]_q$, and let $r'=[q]_r$.
Suppose that $g$ is an $S$-jump function with a $(p,q)$-symmetry in its minima, $m(g) \cap I \neq \emptyset$, and $I$ is $q$-isolated for $g$. Then $m(g) \cap I$ is contained in the union of pairwise disjoint subintervals $I_0, \dots, I_{\lfloor p/q \rfloor}$.
For each $\ell = 0, \dots, \lfloor p/q \rfloor$, 
$S \cap I_\ell$ is an affine progression of width $r$ and difference $r'$, $I_\ell$ is an $r'$-isolated interval for $g$, and $m(g) \cap I_\ell \neq \emptyset$.
\end{lem}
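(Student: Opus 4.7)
The plan is to rely on Lemma~\ref{lem:Icontainsall} for everything it already provides and then verify the two remaining claims directly. Lemma~\ref{lem:Icontainsall} supplies the containment $m(g) \cap I \subset \bigcup_{\ell} I_\ell$ with the $I_\ell$ pairwise disjoint, the non-emptiness $m(g) \cap I_\ell \ne \emptyset$ for each $\ell$, and the crucial fact that $S$ has length strictly between $0$ and $p-1$, so that its break point lies in the interior of $I$. What remains is to verify (a) that $S \cap I_\ell$ is an affine progression of width $r$ and difference $r'$ for every $\ell$, and (b) that each $I_\ell$ is $r'$-isolated for $g$.

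For (a), I would parametrize the points of $S \smallsetminus \{s\}$ as $s + [jq]_p$ for $j = 1, \ldots, l$ and identify which $I_\ell$ each point falls into. Setting $m = \lfloor jq/p \rfloor$ and decomposing $[jq]_p = \ell q + r''$ with $r'' \in [0, r)$, a short manipulation using $p = dq + r$ and $q = d'r + r'$ yields $(j - \ell - md)q = mr + r''$. Writing $j''' := j - \ell - md$, this forces $m = \lfloor j''' q / r \rfloor$ and $r'' = [j''' r']_r$, the latter because $q \equiv r' \pmod r$. The map $j''' \mapsto j = \ell + (dd' + 1)j''' + d\lfloor j''' r'/r \rfloor$ is strictly increasing in $j''' \geq 0$, so the values of $j'''$ producing $j \in \{1, \ldots, l\}$ form an initial segment of the positive integers. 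The corresponding elements of $S \cap I_\ell$ enumerate as $\{s + \ell q + [j''' r']_r\}$, together with the left endpoint $s + \ell q$ whenever $j''' = 0$ produces a valid $j$; this is exactly the form of an affine progression in $I_\ell$ of width $r$ and difference $r'$.

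For (b), since $r' < r < q$ forces $d' \geq 1$, we have $q = d'r + r' \geq r + r'$. Consequently the $r'$-neighborhood $[s + \ell q - r',\; s + \ell q + r + r')$ of $I_\ell$ is disjoint from every other $I_{\ell'}$, as a direct endpoint comparison shows, and since $r' < q$ it also sits within the $q$-neighborhood of $I$. Combining this with the containment from Lemma~\ref{lem:Icontainsall} and the $q$-isolation of $I$ then yields that every minimum of $g$ in the $r'$-neighborhood of $I_\ell$ already lies in $I_\ell$. The main obstacle will be the bookkeeping in part (a): the nested Euclidean relations $p = dq + r$ and $q = d'r + r'$ must be unwound simultaneously to reveal the affine structure at the inner scale, and one must also check that the inner length $l'_\ell$ is bounded by $r - 1$ so the resulting progression is well-defined. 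Once the identity $r'' = [j''' r']_r$ is secured, the strict monotonicity of $j''' \mapsto j$ immediately delivers the initial-segment conclusion and hence the affine progression structure.
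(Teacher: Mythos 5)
Your proof is correct, but it takes a genuinely different route for the key step of showing that $S \cap I_\ell$ is an affine progression of difference $r'$. The paper applies its Lemma~\ref{lem: dynamics} twice: once to pass from $I$ (width $p$, difference $q$) to the auxiliary subintervals $H_\ell = [s+\ell q, s+(\ell+1)q)$ of width $q$, obtaining affine progressions of difference $-r$, and once more to pass from $H_\ell$ to its initial subinterval $I_\ell$ of width $r$ (resp.\ from the rightmost width-$q$ interval to $I_{\lfloor p/q\rfloor}$), obtaining difference $r'$. You instead carry out an explicit double unwinding of the Euclidean algorithm, reparametrizing the indices by $j'''$ and verifying the affine structure via the identity $j'''q = mr + r''$ and $q \equiv r' \pmod r$. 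This is essentially reproving a special case of Lemma~\ref{lem: dynamics} from scratch: it is more self-contained but loses the clean reuse of infrastructure. Your treatment of $r'$-isolation via the gap $q - r \ge r'$ and of the containment/non-emptiness via Lemma~\ref{lem:Icontainsall} matches the paper's. One loose end you flag but do not close: the inner length $l'_\ell$ must satisfy $l'_\ell < r$ for the progression in $I_\ell$ to be well-defined. This does hold, because $j''' = r$ would give $m = q$ and hence $j = \ell + p > l$, placing it outside the allowed range, but the step deserves to be spelled out.
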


\begin{proof}
We begin by defining some auxiliary subintervals.
Define $H_\ell=[s+\ell q,s+(\ell+1)q)$ for $\ell = 0, \dots, \lfloor p/q \rfloor-1$.
As the iterates $f_{I,q}^n(s)$ first intersect $H_\ell$ in its left endpoint, and $H_\ell$ has width $q$, repeated application of Lemma~\ref{lem: first return} shows that $S \cap H_\ell$ is an affine progression of difference $-r$ (possibly length $0$). 
Let $L_{ \lfloor p/q \rfloor}$ denote the rightmost interval of width $q$ in $I$.
Then Lemma~\ref{lem: dynamics} shows that $S \cap L_{\lfloor p/q \rfloor}$ is an affine progression with difference  $-r$.   

As $I_\ell$ is the initial interval of width $r$ in $H_l$ for $\ell < \lfloor p/q \rfloor$,  Lemma~\ref{lem: dynamics} shows that $S \cap I_\ell$ is an affine progression with difference $r'$ when $\ell = 0, \dots, \lfloor p/q \rfloor-1$.  
As $I_{\lfloor p/q \rfloor}$ is the rightmost interval of width $r$ within $L_{\lfloor p/q \rfloor}$, Lemma~\ref{lem: dynamics} shows that $S \cap I_{\lfloor p/q \rfloor}$ is an affine progression with difference  $r'$. 

Lemma~\ref{lem:Icontainsall} shows that $m(g) \cap I \subset I_0 \cup \dots \cup I_{\lfloor p/q \rfloor}$, since $I$ is $q$-isolated for $g$. 
Consequently, the interval of width $q-r \ge r'$ between $I_\ell$ and $I_{\ell+1}$ is disjoint from $m(g)$, for $\ell = 0,\dots, \lfloor p/q \rfloor-1$.
Thus, each $I_\ell$ is $r'$-isolated for $g$, as required. Finally, Lemma~\ref{lem:Icontainsall} guarantees that $m(g) \cap I_\ell \neq \emptyset$ for each $\ell$.
\end{proof}

Now we consider the case that the difference of $S$ is negative.
This case requires considerably greater effort.
For the remainder of the subsection, we assume that $g$ has a $(p,q)$-symmetry in its minima, $m(g) \cap I \ne \emptyset$, $I$ is $q$-isolated for $g$, and $S$ has difference $-q$.

The first result is in direct analogy to Lemma \ref{lem:dividesqpositive}:

\begin{lem}
\label{lem:dividesqnegative1}
For $\ell = 0, \dots, \lfloor p/q \rfloor$, $S \cap I_\ell$ is an affine progression of difference $-r'$, $m(g) \cap I_\ell \ne \emptyset$, and $I_\ell$ is an $r'$-isolated interval for $g$.
\end{lem}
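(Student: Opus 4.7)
The plan is to emulate the proof of Lemma~\ref{lem:dividesqpositive}, appropriately modified for the negative-difference case. First, I will introduce auxiliary width-$q$ subintervals of $I$ that are visited at their left endpoints by the orbit of $s$ under $f_{I,-q}$. Set $L_0 = [s, s+q)$ and $L_\ell = [s+(\ell-1)q+r, s+\ell q+r)$ for $\ell = 1, \dots, d$, where $d = \lfloor p/q \rfloor$. A direct calculation with the iterates shows that $f_{I,-q}^0(s) = s$ is the first entry of the orbit into $L_0$ (at its left endpoint) and that $f_{I,-q}^k(s) = s + p - kq$ for $k = 1, \dots, d$ is the first entry into $L_{d-k+1}$ (again at its left endpoint). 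Repeated application of Lemma~\ref{lem: first return} then shows that $S \cap L_\ell$ is an affine progression in $L_\ell$ based at its left endpoint with difference $+r$, possibly of length $0$.

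Next, I will pass from $L_\ell$ to $I_\ell$. The interval $I_0 = [s, s+r)$ is the initial width-$r$ subinterval of $L_0$, while for $\ell \ge 1$, $I_\ell = [s+\ell q, s+\ell q + r)$ is the terminal width-$r$ subinterval of $L_\ell$, since the left endpoint of $I_\ell$ equals the left endpoint of $L_\ell$ plus $q - r$. Applying Lemma~\ref{lem: dynamics} at a second level---with $L_\ell$ of width $q$ playing the role of $I$, with the positive difference $+r$ of $S \cap L_\ell$ playing the role of $\epsilon q$, and with $r' = [q]_r$ playing the role of the new remainder---the left endpoint of $I_\ell$ falls into the allowed set of starting points (the ``$s$'' case for $I_0$ and the ``$s+p-q$'' case for $I_\ell$ with $\ell \ge 1$) and differs from the break point in all but extremal length cases. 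Lemma~\ref{lem: dynamics} therefore yields that $S \cap I_\ell$ is an affine progression of difference $-r'$; the extremal cases are handled by the convention of Lemma~\ref{lem: dynamics} and Lemma~\ref{lem:length0} that allows the difference to be set freely when the length is $0$.

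To finish, I will invoke Lemma~\ref{lem:Icontainsall}, which applies since $g$ has a $(p,q)$-symmetry on its minima and $I$ is $q$-isolated for $g$. It yields $m(g) \cap I_\ell \ne \emptyset$ for each $\ell = 0, \dots, d$, together with the inclusion $m(g) \cap I \subset I_0 \cup \cdots \cup I_d$. The $r'$-isolation of each $I_\ell$ then follows from two observations: first, consecutive $I_\ell$'s are separated by a gap of width $q - r \ge r'$ (using $q = \lfloor q/r \rfloor \cdot r + r'$ with $\lfloor q/r \rfloor \ge 1$), which combined with the inclusion above rules out minima within $r'$ of $I_\ell$ that lie in $I$; second, the $q$-isolation of $I$ rules out minima in $[s-q, s) \cup [s+p, s+p+q)$, which contains any $r'$-neighbors of $I_0$ or $I_d$ lying outside $I$.

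The step I expect to be the main obstacle is the first one: tracking which iterate of $f_{I,-q}$ enters each of the mutually overlapping $L_\ell$, and confirming that it is the \emph{first} such entry, so that Lemma~\ref{lem: first return} can be chained to describe all of $S \cap L_\ell$. Once this combinatorial bookkeeping is in place, the transfer to $I_\ell$ and the arguments on minima and isolation reduce to direct applications of Lemma~\ref{lem: dynamics} and Lemma~\ref{lem:Icontainsall}.
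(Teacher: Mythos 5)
Your proof is correct, and it is essentially the same approach as the paper's, with a minor variation in which auxiliary width-$q$ intervals are used. The paper works with $H_\ell = [s+\ell q, s+(\ell+1)q)$ for $\ell = 0, \dots, \lfloor p/q \rfloor - 1$ together with the rightmost width-$q$ interval $L_{\lfloor p/q \rfloor}$, extracting $I_\ell$ as the \emph{leftmost} width-$r$ subinterval of $H_\ell$ (and the rightmost of $L_{\lfloor p/q \rfloor}$); to see that $S \cap H_\ell$ is an affine progression of difference $r$, the paper observes that the $f_{I,-q}$-orbit of $s + \ell q$ passes through $s$ before returning to $H_\ell$, so the orbit of $s$ first enters $H_\ell$ at $f_{H_\ell,r}(s+\ell q)$. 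You instead work with $L_0 = [s,s+q)$ and $L_\ell = [s+(\ell-1)q+r, s+\ell q + r)$ for $\ell \ge 1$, taking $I_\ell$ as the leftmost (resp.\ rightmost for $\ell \ge 1$) width-$r$ subinterval, and you verify by direct computation that $f^k_{I,-q}(s)$ hits the left endpoint of each $L_\ell$ on its first visit. This sidesteps the ``intersects $s$ before returning'' observation in favor of a slightly more explicit bookkeeping; it also happens to anticipate the family of intervals $L_\ell$ that the paper introduces only later, in the case $b \in (s+q, s+q+r)$. The reductions via Lemma~\ref{lem: dynamics}, the use of Lemma~\ref{lem:Icontainsall}, and the $r'$-isolation argument all match the paper's. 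One cosmetic remark: your $L_\ell$ for $\ell \ge 1$ are pairwise disjoint (consecutive); only $L_0$ and $L_1$ overlap, so the phrase ``mutually overlapping'' in your closing paragraph overstates the bookkeeping burden, which in fact works out straightforwardly.
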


\begin{proof}

For $\ell = 0, \dots, \lfloor p/q \rfloor-1$, define $H_\ell=[s+lq,s+(l+1)q)$. 
As the iterates $f_{I,-q}^n(s+\ell q)$ intersect $s$ before returning to $H_l$,  Lemma~\ref{lem: first return} shows that $S \cap H_l$ is an affine progression of difference $r$.
As $I_\ell$ is the leftmost interval of width $r$ in $H_\ell$,  Lemma~\ref{lem: dynamics} shows that $I_\ell \cap S$ is an affine progression with difference $-r'$ when $\ell = 0, \dots, \lfloor p/q \rfloor-1$.
Let $L_{ \lfloor p/q \rfloor}$ denote the rightmost interval of width $q$ in $I$.
Then
Lemma~\ref{lem: dynamics} shows that $S \cap L_{\lfloor p/q \rfloor}$ is an affine progression with difference  $r$.   
As the rightmost interval of width $r$ within $L_{\lfloor p/q \rfloor}$, Lemma~\ref{lem: dynamics} shows that $I_{\lfloor p/q \rfloor}$ is an affine progression with difference $-r'$. 
The argument at the end of the proof of Lemma \ref{lem:dividesqpositive} applies verbatim to show that each $I_\ell$ is $r'$-isolated for $g$ and $m(g) \cap I_\ell \neq \emptyset$.
\end{proof}

In order to run the induction in the proof of Theorem \ref{thm: minimizer}, we need to locate a collection of disjoint subintervals of $I$ that capture the minima of $g$ in $I$ and that are furthermore left-like or right-like, assuming that $I$ is.
In certain cases, we can guarantee this property for the subintervals $I_\ell$ themselves, and in other cases, we need to consider related subintervals (see Figure \ref{fig: intervals}).

\begin{figure}
\labellist
\pinlabel 0 [Bl] at -3 50
\pinlabel $q$ [Bl] at 70 50
\pinlabel $2q$ [Bl] at 140 50
\pinlabel $3q$ [Bl] at 210 50
\pinlabel $p$ [Bl] at 230 50
\pinlabel $H_0$ [Bl] at 30 48
\pinlabel $H_1$ [Bl] at 100 48
\pinlabel $H_2$ [Bl] at 170 48
\pinlabel $I_0$ [Bl] at 3 13
\pinlabel $I_1$ [Bl] at 75 13
\pinlabel $I_2$ [Bl] at 147 13
\pinlabel $I_3$ [Bl] at 219 13
\pinlabel $I_1'$ [Bl] at 57 13
\pinlabel $I_2'$ [Bl] at 129 13
\pinlabel $I_3'$ [Bl] at 201 13
\endlabellist
\includegraphics[width=4in]{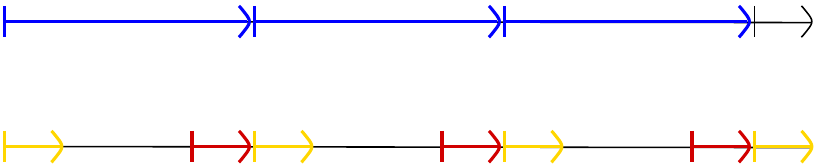}
\caption{Subintervals of $[0,p)$.}
\label{fig: intervals}
\end{figure}

In any case, we need to constrain the location of the break point and to compare the values of $g$ at points spaced $q$ apart.

\begin{lem}
\label{lem: break point location}
Either
\begin{enumerate}
\item
$b > s+p-r$, and $g(x) = g(x-q)$ for all $x,x-q \in I$, $x < b$; or else
\item
$b < s+q+r$, and $g(x) = g(x-q)$ for all $x,x-q \in I$, $x \ge b$.
\end{enumerate}
\end{lem}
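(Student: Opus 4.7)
The plan is to compare values of $g$ at points spaced $q$ apart, using the formula from Lemma~\ref{lem: pattern 2-2}(3). Setting $\sigma := qt - c\theta$ for the constant $c$ appearing there, we obtain $g(x)-g(x-q)=\sigma$ for $s+q \le x < b$ and $g(x)-g(x-q)=\sigma-\theta$ for $b \le x < s+p$. Since $\theta \mid q$ by hypothesis, $\sigma \equiv qt \equiv 0 \pmod \theta$, so both $\sigma$ and $\sigma-\theta$ lie in $\theta\bZ$; in particular each is either zero or has absolute value at least $\theta$.

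Next I would invoke the $(p,q)$-symmetry and $q$-isolation hypotheses via Lemma~\ref{lem:Icontainsall} to produce a sequence $y_0<y_1<\cdots<y_{N-1}$ in $m(g) \cap I$ with $N=\lceil p/q \rceil \ge 2$, consecutive terms differing by $q$, and $y_0 \in I_0 = [s,s+r)$. Each transition satisfies $g(y_{i+1})-g(y_i)=0$; substituting $x=y_{i+1}$ into the formula above forces $\sigma=0$ whenever $y_{i+1}<b$ and $\sigma-\theta=0$ whenever $y_{i+1}\ge b$. Since $\theta \ne 0$ these two outcomes are mutually exclusive, so exactly one holds uniformly across all $N-1 \ge 1$ transitions.

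If $\sigma=0$, then $y_{N-1}<b$; since $y_{N-1}=y_0+(N-1)q = y_0 + (p-r) \ge s+p-r$, we conclude $b>s+p-r$, and the identity $g(x)=g(x-q)$ for $x,x-q \in I$ with $x<b$ follows from the formula and $\sigma=0$. This is alternative (1). If instead $\sigma=\theta$, then $y_1 \ge b$; since $y_1=y_0+q < s+r+q$, we conclude $b<s+q+r$, and the identity $g(x)=g(x-q)$ for $x,x-q \in I$ with $x\ge b$ follows from $\sigma-\theta=0$. This is alternative (2).

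The main obstacle is ensuring that the $q$-spaced sequence of minima cannot straddle $b$, which is what lets us route the whole sequence into a single one of the two regimes governed by $\sigma$ and $\sigma-\theta$. This falls out cleanly from the dichotomy $\sigma \in \{0,\theta\}$ once the precise formula for $g(x)-g(x-q)$ from Lemma~\ref{lem: pattern 2-2}(3) is in hand; the divisibility hypothesis $\theta \mid q$ is used only to guarantee that both $\sigma$ and $\sigma-\theta$ are multiples of $\theta$, so that vanishing of a single transition pins down the value of $\sigma$ outright.
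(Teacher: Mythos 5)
Your proof is correct and follows essentially the same approach as the paper's: apply Lemma~\ref{lem: pattern 2-2}(3) to obtain the two-valued jump formula $g(x)-g(x-q)\in\{\sigma,\sigma-\theta\}$, then feed in the $q$-spaced sequence of minima supplied by Lemma~\ref{lem:Icontainsall} to force $\sigma\in\{0,\theta\}$ and thereby locate $b$. The aside that $\theta\mid q$ makes $\sigma$ and $\sigma-\theta$ multiples of $\theta$ is true but plays no role — as you later observe, only $\sigma\ne\sigma-\theta$ is needed to rule out the minima straddling $b$, which is exactly the paper's argument.
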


\begin{proof}
First, apply Lemma \ref{lem: pattern 2-2}(3). Note that this result does not require that $\theta \nmid q$ though it appears in the earlier subsection. 
It shows that $S$ meets every interval $(x-q,x] \subset I$ in the same number of points $c$ for $x < b$ and in the same number of points $c+1$ for $x \ge b$. 
Setting $\sigma = (q-c) t + c (t - \theta) = q t - c \theta$, we have
\begin{equation}
\label{eq: r difference2}
g(x) - g(x-q) = \begin{cases} \sigma, & s+q \le x < b; \\ \sigma - \theta, & b \le x < s+p. \end{cases}
\end{equation}
Next, by Lemma~\ref{lem:Icontainsall}, there exists a value $x_0 \in m(g) \cap I_0= m(g) \cap [s, s+r)$, and we have $g(x_0+lq) = \min(g)$ for all $l = 0, \dots, \lfloor p/q \rfloor-1$.
Applying \eqref{eq: r difference2} to the values $x = x_0 + l q$, $l=1,\dots,\lfloor p/q \rfloor$, we conclude that either 

\begin{enumerate}
\item
$\sigma=0$, and $b>x_0+ \lfloor p/q \rfloor q \ge s+p-r$, or 
\item
$\sigma=\theta$, and $b \leq x_0+q < s+q+r$,
\end{enumerate} 
which give the two conclusions of the Lemma.
\end{proof}

We first dispense with the possibility that $b \le s+q$.

\begin{lem}
\label{lem: b vs s + q}
Suppose that $b \le s+q$.
If $I$ is left-like, then $I_\ell$ is left-like for all $\ell = 0,\dots,\lfloor p/q \rfloor$. If $I$ is right-like, then $I_\ell$ is right-like for all $\ell = 0,\dots,\lfloor p/q \rfloor$.
\end{lem}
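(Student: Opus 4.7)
My approach is to leverage Lemma~\ref{lem: break point location} to obtain a $q$-periodicity of $g$ across most of $I$, then transfer the left-like or right-like condition from $I$ down to $I_0$ by telescoping, and finally propagate to each $I_\ell$ by translation. Since $p = d_1 q + r \ge q + r$, we have $b \le s+q \le s+p-r$, which rules out case~(1) of Lemma~\ref{lem: break point location} and forces case~(2): $g(x) = g(x-q)$ for every $x, x-q \in I$ with $x \ge b$. Combined with $b \le s+q$, this identity holds throughout $[s+q, s+p)$, and iterating produces $g(x) = g(x-\ell q)$ for $x \in I_\ell$ with $\ell \ge 1$ and $x - \ell q \in I_0$. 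Thus $g|_{I_\ell}$ is a translate of $g|_{I_0}$.

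To verify the jump condition at the right endpoint of each $I_\ell$ in the left-like case, I identify the $S$-membership of these endpoints. The hypothesis $m(g) \cap I \ne \emptyset$ together with Lemma~\ref{lem:Icontainsall} excludes $l = 0$ and $l = p-1$. A direct check of the iterates $s, s+(p-q), s+(p-2q), \ldots$ of $f_{I,-q}$ shows that $b \le s+q$ forces $l \ge d_1$ (for $0 < l < d_1$, $b = s+p-lq > s+q$), so $s+r = f_{I,-q}^{d_1}(s) \in S$. Repeated application of Lemma~\ref{lem: pattern 2-2}(1)---with its exception points $s+q$ and $b$ avoided since $s+\ell q + r > s+q \ge b$ for $\ell \ge 1$---then yields $s+\ell q + r \in S$ for $\ell = 0, \ldots, d_1 - 1$. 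In the right-like scenario, the analogous statement $s+\ell q \notin S$ for $\ell = 1, \ldots, d_1$ follows similarly from $s \notin S$, with $\ell = 1$ handled by computing $g(s+q-1) - g(s-1) = 0$ directly from the prescribed jumps at $s$ and $s+q$.

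With these ingredients, the left-likeness of $I$ and the telescoping identities $g(s+p-1) = g(s+r-1)$ and $g(s) = g(s+q) = \cdots = g(s+p-r)$ yield $g(s+r) = g(s+r-1) + (t-\theta) = g(s+p) > g(s)$, confirming that $I_0$ is left-like. Translation by $\ell q$ then propagates left-likeness to each $I_\ell$ for $1 \le \ell \le d_1 - 1$, while the extremal $\ell = d_1$ case (where $z = s+p$ lies outside $I$) absorbs its jump condition directly from the left-like hypothesis on $I$. The right-like case proceeds by a symmetric computation at the left endpoints. The main obstacle is the bookkeeping needed to verify that the exception points of Lemma~\ref{lem: pattern 2-2}(1) are genuinely avoided at each iteration and to treat the extremal $\ell = d_1$ case uniformly with the others.
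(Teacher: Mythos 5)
Your overall strategy matches the paper's: use Lemma~\ref{lem: break point location} to get the $q$-periodicity $g(x)=g(x-q)$ throughout $I$, establish the relevant $S$-membership of the endpoints $s+\ell q$ and $s+\ell q+r$, and then translate the left- or right-likeness of $I$ down to each $I_\ell$. The left-like half of your argument is sound; the detour through Lemma~\ref{lem: pattern 2-2}(1) to propagate $s+r\in S$ to $s+\ell q+r\in S$ is unnecessary (once $l\ge d_1$, each $s+\ell q+r=f_{I,-q}^{d_1-\ell}(s)$ is already seen to lie in $S$ directly), but it is not wrong.

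There is a genuine gap in your right-like case. You need $s+\ell q\notin S$ for $\ell=1,\dots,\lfloor p/q\rfloor$ to verify the second condition of right-likeness for each $I_\ell$. For $\ell\ge 2$ the exception points of Lemma~\ref{lem: pattern 2-2}(1) are indeed avoided, so everything reduces to the base case $s+q\notin S$. You claim to handle $\ell=1$ ``by computing $g(s+q-1)-g(s-1)=0$ directly from the prescribed jumps at $s$ and $s+q$,'' but this is circular: the jump of $g$ at $s+q$ is $t$ if $s+q\notin S$ and $t-\theta$ if $s+q\in S$, so to evaluate $g(s+q-1)$ from $g(s+q)=g(s)=g(s-1)+t$ you must already know which case you are in. (The $q$-periodicity from Lemma~\ref{lem: break point location} does not help here either, since $s-1\notin I$.) The paper closes this gap by a contradiction argument at the level of the affine progression: if $s+\ell q\in S$ for some $1\le\ell\le\lfloor p/q\rfloor$, then together with $b\le s+q$ one is forced to $b=s+q$ and $l=p-1$, contradicting Lemma~\ref{lem:Icontainsall}. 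A quick way to see this for $\ell=1$: $s+q=f_{I,-q}^{p-1}(s)$, so $s+q\in S$ would force the length of $S$ to be $p-1$, which has already been excluded. Once $s+q\notin S$ is in hand, the remainder of your computation at the left endpoints goes through.
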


\begin{proof}
We begin by noting that $b \le s+q$ implies that
\begin{enumerate}
\item[(a)]
$s+\ell q+r \in S$ for every $\ell=0, \dots, {\lfloor p/q \rfloor}-1$, and
\item[(b)]
$g(x)=g(x-q)$ for all $x, x-q \in I$.
\end{enumerate}
Deduction (a) uses the fact that the first return of $s$ to $[s,s+q]$ under $f_{I,-q}$ is $s+r$, while (b) uses Lemma \ref{lem: break point location}(2).

Now, suppose that $I$ is left-like.
Deduction (a) along with the fact that $g$ is an $S$-jump function establishes the second condition on left-likeness for each $I_\ell$, $\ell < \lfloor p/q \rfloor$, while the second condition on left-likeness for $I$ implies that for $I_{\lfloor p/q \rfloor}$, viz. $g(s+p) = g(s+p-1) + t - \theta$.
The right side equals $g(s+p-q-1) + t - \theta$ by (b), and this in turn equals $g(s+p-q)$ by (a) and that $g$ is an $S$-jump function.
We conclude that $g(s+p) = g(s+p-q)$.
Thus, (b) and this conclusion yield $g(s) = g(s+\ell q)$ and $g(s+r) = g(s+r+\ell q)$ for all $\ell=1,\dots,\lfloor p/q \rfloor$.
The first condition on left-likeness for $I$ gives $0<g(s+p)-g(s)$, which we can now equate with $g(s+r+\ell q)-g(s+\ell q)$ for all $\ell=0,\dots,\lfloor p/q \rfloor$ and thereby establish the first condition on left-likeness for each $I_\ell$.
Consequently, each $I_\ell$ is left-like.

Next, suppose that $I$ is right-like.
The second condition is equivalent to $s \notin S$ (since $g$ is an $S$-jump function).
Suppose by way of contradiction that $s + \ell q \in S$ for some $1 \le \ell \le \lfloor p/q \rfloor$.
Since $b \le s+q$, it follows that $b = s+q$ and that $S$ has length $p-1$, contradicting Lemma~\ref{lem:Icontainsall}.
Hence the second condition on right-likeness holds for each $I_\ell$.
Since $I$ is right-like $g(s) = g(s-1) + t$.
By (b) above, we have $g(s) = g(s+q)$.
Since $b \ne s+q$, we have $s+q \notin S$.
As $g$ is an $S$-jump function,  $g(s+q) = g(s+q-1) + t$.
We conclude that $g(s+q-1) = g(s-1)$.
Thus, (b) and this conclusion yield $g(s-1) = g(s+\ell q-1)$ and $g(s+r-1) = g(s+\ell q+r-1)$ for all $\ell=1,\dots,\lfloor p/q \rfloor$.
The first condition on right-likeness of $I$ gives $g(s+p-1)-g(s-1) < 0$, which we can now equate with $g(s+\ell q+r-1)-g(s+\ell q-1)$ for all $\ell=0,\dots,\lfloor p/q \rfloor$ and thereby establish the first condition on right-likeness for each $I_\ell$.
Consequently, each $I_\ell$ is right-like.
\end{proof}

Following Lemmas \ref{lem: break point location} and \ref{lem: b vs s + q}, we must now treat the pair of possibilities $b \in (s+p-r,s+p)$ and $b \in (s+q,s+q+r)$ in turn.

\begin{lem}
Suppose that $b \in (s+p-r,s+p)$.
If $I$ is left-like, then $I_{\lfloor p/q \rfloor}$ is left-like, and if $I$ is right-like, then $I_{\lfloor p/q \rfloor}$ is right-like.
\end{lem}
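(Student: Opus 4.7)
My plan is to exploit the $q$-periodicity of $g$ on $[s,b)$ provided by case~(1) of Lemma~\ref{lem: break point location}, which applies because $b > s+p-r$. Writing $d = \lfloor p/q \rfloor$, so that $p - r = dq$, I would iterate the identity $g(x) = g(x-q)$ to produce the chain
\[
g(s+p-r) \,=\, g(s+dq) \,=\, g(s+(d-1)q) \,=\, \cdots \,=\, g(s+q) \,=\, g(s),
\]
noting that each intermediate point $s + jq$ with $1 \le j \le d$ lies strictly below $b$ (because $s+jq \le s+dq = s+p-r < b$), so the periodicity applies at each step.

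For the left-like conclusion, both required conditions then follow at once: the jump condition $g(s+p) - g(s+p-1) = t-\theta$ is inherited verbatim from $I$, and $g(s+p) - g(s+p-r) = g(s+p) - g(s) > 0$ by substitution into the chain.

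For the right-like conclusion, I must verify both that $s+p-r \notin S$ and that $g(s+p-1) - g(s+p-r-1) < 0$. For the former I would chain Lemma~\ref{lem: pattern 2-2}(1) through $s+dq \in S \iff s+(d-1)q \in S \iff \cdots \iff s+q \in S$; the chain is legal because each intermediate point $x = s+jq$ with $2 \le j \le d$ differs from both of the excluded points $s+q$ and $b$. Then $s+q \in S$ would force the length $l$ of $S$ to equal $p-1$, a case ruled out by Lemma~\ref{lem:Icontainsall} under our standing hypotheses, so $s+q \notin S$ and hence $s+p-r \notin S$ too.

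For the inequality, I would apply the periodicity chain shifted down by one index to obtain $g(s+p-r-1) = g(s+q-1)$, and then evaluate the remaining jumps at the boundary separately. Since both $s$ and $s+q$ lie outside $S$ (the former by right-likeness of $I$, the latter from the previous step), both $g(s+q) - g(s+q-1)$ and $g(s) - g(s-1)$ equal $t$; combining this with $g(s+q) = g(s)$ yields $g(s+q-1) = g(s-1)$, so $g(s+p-1) - g(s+p-r-1) = g(s+p-1) - g(s-1) < 0$ by right-likeness of $I$. The main subtlety I expect is that the chain link between $x = s+q$ and $x = s$ falls outside the range where the periodicity formula of Lemma~\ref{lem: break point location} literally applies, so separately confirming $s, s+q \notin S$ is essential to bridging the gap.
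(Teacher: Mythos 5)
Your proof is correct and follows essentially the same approach as the paper. For the right-like case the paper is slightly more economical: having established $s,\,s+p-r \notin S$, it subtracts the jump relations $g(s)=g(s-1)+t$ and $g(s+p-r)=g(s+p-r-1)+t$ from the single periodicity identity $g(s)=g(s+p-r)$ to conclude $g(s-1)=g(s+p-r-1)$ in one step, whereas you detour through $g(s+p-r-1)=g(s+q-1)$ (chaining periodicity) and then bridge $g(s+q-1)=g(s-1)$ using the jump conditions at $s$ and $s+q$ together with $g(s+q)=g(s)$; the two derivations are equivalent. Likewise, for $s+p-r\notin S$, the paper directly notes that $s+p-r\in S$ would force $s+\ell q\in S$ for all $\ell$, giving length $p-1$ or break point $s+q$, while you route the chain explicitly through $s+q$ before invoking the length-$(p-1)$ contradiction from Lemma~\ref{lem:Icontainsall}; again, the same underlying argument.
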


\begin{proof}
By Lemma \ref{lem: break point location}(1), $g$ is $q$-periodic on $[s,s+p-r]$.
In particular, $g(s) = g(s+p-r)$.

If $I$ is left-like, then $0 < g(s+p)-g(s)$ and $g(s+p) - g(s+p-1) = t - \theta$.
It follows that $0 < g(s+p)-g(s+p-r)$, so $I_{\lfloor p/q \rfloor}$ is left-like.

If $I$ is right-like, then $g(s+p-1)-g(s-1) < 0$ and $s \notin S$ (since $g$ is an $S$-jump function).
Note that $s+p-r \notin S$: otherwise, since $b > s+p-r$, we would have $s+\ell q \in S$ for all $\ell = 1,\dots, \lfloor p/q \rfloor$, and then $S$ has length $p-1$ and break point $s+q$, either of which is a contradiction.
This verifies the second condition of right-likeness for $I_{\lfloor p/q \rfloor}$.
Now, since $s, s+p-r \notin S$, we have $g(s) = g(s-1) + t$ and $g(s+p-r) = g(s+p-r-1)+t$; and, as noted at the outset, we have $g(s+p-r) = g(s)$.
Consequently, $g(s-1) = g(s+p-r-1)$, and we obtain $g(s+p-1)-g(s+p-r-1) < 0$, verifying the first condition of right-likeness for $I_{\lfloor p/q \rfloor}$.
\end{proof}

Define $I_\ell'=[s+\ell q-r,s+\ell q)$ for $\ell=1,\dots,\lfloor p/q \rfloor$, and recall that $H_\ell = [s+\ell q, s+ (\ell+1)q)$ for $\ell = 0, \dots, \lfloor p/q \rfloor -1$.
Thus, $I_\ell$ is the leftmost subinterval of width $r$ in $H_\ell$, and $I_{\ell+1}'$ is the rightmost subinterval of width $r$ in $H_\ell$, for $\ell=0,\dots,\lfloor p/q \rfloor -1$.
See Figure \ref{fig: intervals}.

\begin{lem}
\label{lem:auxintervals1}
Suppose that $b \in (s+p-r,s+p)$ and $\theta \nmid r$.
Then either $I_\ell$ is left-like, for $\ell=0,\dots,\lfloor p/q \rfloor -1$, or else $I_\ell'$ is right-like, for $\ell=1,\dots,\lfloor p/q \rfloor$.
In the latter case, $S \cap I_\ell'$ is an affine progression of width $r$ and difference $-r'$, $I_\ell'$ is $r'$-isolated for $g$, and $m(g) \cap I_\ell' \ne \emptyset$, for $\ell=1,\dots,\lfloor p/q \rfloor$.
Lastly, in this latter case, $m(g) \cap I \subset I_1' \cup \dots \cup I_{\lfloor p/q \rfloor}' \cup I_{\lfloor p/q \rfloor}$. 
\end{lem}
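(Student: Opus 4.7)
My plan is to reduce the conditions ``$I_\ell$ left-like'' and ``$I_\ell'$ right-like'' to sign tests on two explicit quantities, prove the resulting dichotomy, and then read off Case B's remaining assertions. Setting $d = \lfloor p/q \rfloor$, $\tau = g(s+r) - g(s)$, and $\tau' = g(s+q-r) - g(s)$, I will show that $I_\ell$ is left-like for $\ell = 0, \ldots, d-1$ iff $\tau > 0$, and that $I_\ell'$ is right-like for $\ell = 1, \ldots, d$ iff $\tau' > 0$.

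First, because $b > s+p-r$, Lemma \ref{lem: break point location}(1) gives $q$-periodicity of $g$ on $[s, s+p-r]$. Iterating yields $g(s + \ell q) = g(s)$ for $\ell = 0, \ldots, d$, $g(s + \ell q + r) = g(s+r)$ for $\ell = 0, \ldots, d-1$, and $g(s + \ell q - r) = g(s+q-r)$ for $\ell = 1, \ldots, d$. Next, the $S$-membership facts I need follow from chains built from Lemma \ref{lem: pattern 2-2}(1) and (2): since $f_{I,-q}(s) = s+p-q \in S$ always (as $l \ge 1$), propagation gives $s + \ell q + r \in S$ for $\ell = 0, \ldots, d-1$; and since Lemma \ref{lem:Icontainsall} excludes the length $l = p-1$, $s + q \notin S$, so $s + \ell q \notin S$ for $\ell = 1, \ldots, d$, and also $s + \ell q - r \notin S$ for $\ell = 1, \ldots, d$. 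Combined with the periodicity, the ``second conditions'' of left- and right-likeness are automatic, and the ``first conditions'' reduce to the sign tests on $\tau$ and $\tau'$.

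The crux is showing that $\tau > 0$ or $\tau' > 0$, and this is the main obstacle. Writing $\tau = rt - c_0 \theta$ and $\tau' = \kappa \theta - rt$, where $c_0 = |S \cap (s, s+r]|$ and $\kappa = |S \cap (s+p-r, s+p)|$, the hypothesis $\theta \nmid r$ together with $\gcd(t, \theta) = 1$ ensures both quantities are nonzero, and both being negative would force $\kappa < rt/\theta < c_0$. The contradiction will come from combining the $(p, q)$-symmetry with $q$-isolation: the symmetry provides a sequence of minimisers $j, j+q, \ldots, j + dq$ with $j = s + a \in I_0$ and terminus in $I_d$, while $q$-isolation pins the sequence to exactly these $d+1$ terms. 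The equality $g(j + dq) = g(j)$ becomes an identity counting $S$ on $(j, j + dq]$; evaluating this count slab by slab via Lemma \ref{lem: pattern 2-2}(3), and accounting for the fact that $q$-periodicity of $g$ fails only past the break point $b \in I_d$, forces a relation between $a$, $c_0$, and $\kappa$ that contradicts $c_0 > \kappa$.

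Once the dichotomy is in hand, the remaining Case B claims are comparatively routine. The affine-progression structure on $I_\ell'$ with difference $-r'$ follows from Lemma \ref{lem: dynamics} applied to $I_\ell'$ as the rightmost width-$r$ subinterval of the length-$q$ slab $H_{\ell-1}$, invoking Lemma \ref{lem:dividesqnegative1} for the slab-level structure. The $r'$-isolation of $I_\ell'$ inherits from the $q$-isolation of $I$, with the sign conditions excluding minima in the $r'$-neighbourhoods of each $I_\ell'$. Finally, non-emptiness of $m(g) \cap I_\ell'$ and the covering $m(g) \cap I \subset I_1' \cup \cdots \cup I_d' \cup I_d$ both follow from the $(p, q)$-symmetry and the observation that, when $\tau < 0$, the minimum of $g|_{H_\ell}$ for $\ell < d$ is forced past $s + \ell q + r$ and lands in $I_{\ell+1}'$, while $I_d$ handles the final slab.
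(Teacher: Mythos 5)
Your reduction of the left- and right-likeness of the subintervals to sign tests on $\tau = g(s+r)-g(s)$ and $\tau' = g(s+q-r)-g(s)$ is sound, and the bookkeeping via $q$-periodicity of $g$ on $[s,s+p-r]$ (Lemma~\ref{lem: break point location}(1)) together with the $S$-membership chain from Lemma~\ref{lem: pattern 2-2} is correct. However, you take a substantially different route than the paper for the crucial dichotomy, and you do not complete it.

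The paper establishes the dichotomy by applying Lemma~\ref{lem: minima} --- specifically its Addendum --- once to the slab $H_0$. Since $b \in (s+p-r, s+p)$, the break point of $S \cap H_0$ lands in the rightmost width-$r$ subinterval of $H_0$, which is precisely the hypothesis of the Addendum. It then says that $g(x+r)-g(x)$ equals a single constant $\sigma$ for all $x$ in the relevant range, with $\sigma \neq 0$ because $\theta \nmid r$. Conclusion~(1) of Lemma~\ref{lem: minima} (which already includes ``$I_0$ is left-like'') holds when $\sigma > 0$, and conclusion~(3) (which already includes ``$I_1'$ is right-like'') holds when $\sigma < 0$. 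Your $\tau$ is this $\sigma$, so the correct route is simply: $\tau$ has a sign, and each sign yields one arm of the dichotomy. You instead treat $\tau$ and $\tau'$ as independent quantities and attempt to rule out $\tau < 0$ and $\tau' < 0$ simultaneously via a contradiction built from the $(p,q)$-symmetry and $q$-isolation. As you acknowledge, this is ``the main obstacle,'' and what you give is a sketch of the intended counting contradiction rather than a proof. The equivalence $\tau < 0 \iff \tau' > 0$ is actually true, but it is not evident from the inequality $\kappa < rt/\theta < c_0$ that you derive --- that inequality by itself is not a contradiction, and the slab-by-slab count you gesture at is not carried through. I recommend abandoning the two-quantity dichotomy and instead noticing that the hypothesis $b \in (s+p-r,s+p)$ puts you squarely in the scope of the Addendum to Lemma~\ref{lem: minima} applied to $H_0$; the case split, and the accompanying left-/right-likeness, minima localization, and affine-progression structure on the appropriate width-$r$ subinterval, then come out packaged.
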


\begin{proof}
We focus attention on $H_0$.
It is the left-most subinterval of $I$ of width $q$.
Consequently, $S \cap H_0$ is an affine progression of difference $r$.
Since $b$ lies in the rightmost subinterval of width $r$ in $I$, it follows that the break point of $S \cap H_0$ in $H_0$ is $f^{-1}_{I, -q}(b)=f_{I,q}(b)$, and it lies in the right-most subinterval of width $r$ in $H_0$.
Apply Lemma \ref{lem: minima} to $(H_0,S \cap H_0)$.
Note that the hypothesis $\theta \nmid r$ enables us to apply it and that the Addendum holds by the remark about the break point.
It follows that one of its conclusions (1) and (3) hold.  Thus, either

\begin{enumerate}
\item
$I_0$ is left-like: $g(s+r)-g(s) > 0$ and $g(s+r)-g(s+r-1) = t - \theta$; or
\item
$I_1'$ is right-like: $g(s+q-1) - g(s+q-r-1) < 0$ and $g(s+q-r) - g(s+q-r-1) = t$;
furthermore, $m(g) \cap H_0 \subset I_1'$, and $S \cap I_1'$ is an affine progression of difference $-r'$.
\end{enumerate}
Recall that 
Lemma \ref{lem: break point location}(1) shows that $g$ is $q$-periodic on $[s,s+p-r] = H_0 \cup H_1 \cup \cdots \cup H_{\lfloor p/q \rfloor -1} \cup \{s + p - r \}$.

Assume $(1)$ holds. For $\ell=0,\dots,\lfloor p/q \rfloor$, Lemma \ref{lem:dividesqnegative1} shows $S \cap I_\ell$ is an affine progression of difference $-r'$. The $q$-periodicity on $[s,s+p-r]$ shows that the left-like criterion for $I_0$ implies it for $I_\ell=0,\dots,\lfloor p/q \rfloor-1$. This gives us one conclusion of the Lemma.

Assume $(2)$ holds. As the iterates $f_{I,-q}^n(s+\ell q)$ intersect $s$ before returning to $H_l$,  Lemma~\ref{lem: first return} shows that $S \cap H_l$ is an affine progression with difference $r$ for each $\ell = 0, \dots, \lfloor p/q \rfloor -1$.
As $I_{\ell+1}'$ is the rightmost subinterval of width $r$ in $H_\ell$, for $\ell=0,\dots,\lfloor p/q \rfloor -1$, Lemma \ref{lem: dynamics} shows that $S \cap I_{\ell}'$ is an affine progression with difference $-r'$ for $\ell = 1, \dots, \lfloor p/q \rfloor$.
As $m(g) \cap H_0 \subset I_1'$, we conclude $\emptyset \ne m(g) \cap H_\ell \subset I_{\ell+1}'$ for $\ell=0,\dots,\lfloor p/q \rfloor -1$, using the $q$-periodicity of $g$ and Lemma \ref{lem:Icontainsall}.
Again applying Lemma \ref{lem:Icontainsall}, we see $m(g) \cap I \subset I_1' \cup \dots \cup I_{\lfloor p/q \rfloor}' \cup I_{\lfloor p/q \rfloor}$.

What remains is to establish, in this latter case, the $r'$-isolation of the $I_\ell'$.
Combining Lemma \ref{lem:Icontainsall} with the above, we have $\emptyset \ne m(g) \cap H_\ell \subset I_\ell \cap I_{\ell+1}'$ for $\ell=0,\dots,\lfloor p/q \rfloor -1$.
Since $I_\ell$ is the left-most interval of width $r$ in $H_\ell$ and $I_{\ell+1}'$ is the right-most interval of width $r$ in $H_\ell$, it follows that $q = r +r'$ and that $m(g) \cap H_\ell \subset [s+ \ell q + r', s + (\ell+1) q -r')$ for $\ell=0,\dots,\lfloor p/q \rfloor -1$.
From this we see that $I_\ell'$ is $r'$-isolated for $\ell=1,\dots,\lfloor p/q \rfloor -1$, and that there are no minima of $g$ to the left of $I_{\lfloor p/q \rfloor}'$ and within $r'$ units of it.
The last thing is to show that there are no minima of $g$ to the right of $I_{\lfloor p/q \rfloor}'$ and within $r'$ units of it.
This is because of Lemma \ref{lem:Icontainsall}: if $x \in m(g)$ were such a point, then $x-q \in m(g)$ as well; but $x-q$ is in the leftmost interval of width $r'$ in $H_{\lfloor p/q \rfloor - 1}$, which we already argued is disjoint from $m(g)$.
\end{proof}

For the case $b \in (s+q,s+q+r)$, we obtain two more Lemmas in direct analogy with the previous two. The proofs are completely analogous, and we omit them.

\begin{lem}
Suppose that $b \in (s+q,s+q+r)$.
If $I$ is left-like, then $I_0$ is left-like, and if $I$ is right-like, then $I_0$ is right-like. \qed
\end{lem}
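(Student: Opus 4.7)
The plan is to mirror the proof of the preceding lemma, using Lemma~\ref{lem: break point location}(2) in place of Lemma~\ref{lem: break point location}(1). Since $b < s+q+r$, the former gives $g(x) = g(x-q)$ for every $x, x-q \in I$ with $x \ge b$. I would iterate this identity downward from $x = s+p-1$ in $\lfloor p/q \rfloor$ steps to conclude $g(s+p-1) = g(s+r-1)$; the most restrictive intermediate condition is $s + r - 1 + q \ge b$, which holds because $b - s < q + r$.

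The main obstacle is establishing that $s + r \in S$, which is what forces the second left-like condition on $I_0$. By Lemma~\ref{lem:Icontainsall} the length of $S$ is positive, so its first non-$s$ element $s + [-q]_p = s + p - q$ lies in $S$. I would then consider the arithmetic progression $s+r,\, s+r+q,\, \ldots,\, s+r+(\lfloor p/q \rfloor - 1)q = s+p-q$ inside $I$, and verify that no intermediate term coincides with the special points $s+q$ or $b$: the equation $s+r+jq = s+q$ forces $jq = q - r \in (0,q)$, and $s+r+jq = b$ forces $jq = b - s - r \in (q-r, q)$, neither being a non-negative multiple of $q$. Lemma~\ref{lem: pattern 2-2}(1) then propagates $S$-membership along the progression, so $s + r \in S$.

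With $s + r \in S$ and $g(s+p-1) = g(s+r-1)$ in hand, the two conclusions fall out quickly. In the left-like case, $s + r \in S$ supplies the second left-like condition for $I_0$; combining $g(s+p) - g(s+p-1) = t - \theta$ with the two equalities yields $g(s+r) = g(s+p)$, whence $g(s+r) - g(s) = g(s+p) - g(s) > 0$ gives the first left-like condition. In the right-like case the condition $s \notin S$ is identical for $I$ and $I_0$, while $g(s+r-1) - g(s-1) = g(s+p-1) - g(s-1) < 0$ delivers the first right-like condition.
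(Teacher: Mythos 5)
Your proof is correct and follows the approach the authors intended: the paper explicitly omits this proof as ``completely analogous'' to that of the preceding lemma (the $b \in (s+p-r,s+p)$ case), and you have carried out exactly that analogy — invoking Lemma~\ref{lem: break point location}(2) in place of (1), iterating the $q$-periodicity to obtain $g(s+p-1)=g(s+r-1)$, and supplying the one non-trivial boundary fact ($s+r\in S$) that plays the role $s+p-r\notin S$ plays in the other case. The only stylistic difference is in how you establish $s+r\in S$: you propagate membership down the arithmetic progression $s+r,\,s+r+q,\dots,s+p-q$ via Lemma~\ref{lem: pattern 2-2}(1) after ruling out collisions with $s+q$ and $b$, whereas the paper's argument for the mirror statement counts iterates against the location of the break point $b$; both are equally short and valid. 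One small nit: you could also observe directly that since $b\in(s+q,s+q+r)$ and the first $\lfloor p/q\rfloor$ iterates of $s$ under $f_{I,-q}$ all lie in $[s+r,s+p-q]$, none of them equals $b$, so $S$ has length at least $\lfloor p/q\rfloor$ and hence contains $s+r$; but your route is fine.
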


For $\ell = 1, \dots, \lfloor p/q \rfloor$, define $L_\ell=[s+(\ell-1)q+r,s+\ell q+r)$. 
As the iterates of $s$ under $f_{I,-q}$ first intersect $L_\ell$ in its left endpoint, $S \cap L_\ell$ is an affine progression of difference $r$. Define $I_\ell''=[s+(\ell-1) q+r,s+(\ell-1) q+2r)$ for $\ell=1,\dots,\lfloor p/q \rfloor$.
As the initial interval of width $r$ in $L_\ell$,  Lemma~\ref{lem: dynamics} shows that $I_\ell'' \cap S$ is an affine progression
with difference $-r'$, when $\ell = 1, \dots, \lfloor p/q \rfloor$.
Note that $I_{\ell}$ is the rightmost subinterval of width $r$ in $L_\ell$, and $I_\ell''$ is the leftmost subinterval of width $r$ in $L_\ell$, for $\ell=1,\dots,\lfloor p/q \rfloor$. The intervals $L_\ell, I_\ell''$ play the roles in the proof of Lemma~\ref{lem:auxintervals2} played by $H_\ell, I_\ell'$ in the proof of Lemma~\ref{lem:auxintervals1} .

\begin{lem}
\label{lem:auxintervals2}
Suppose that $b \in (s+q,s+q+r)$ and $\theta \nmid r$.
Then either $I_\ell$ is right-like for $\ell=1,\dots,\lfloor p/q \rfloor$, or else $I_\ell''$ is left-like for $\ell=1,\dots,\lfloor p/q \rfloor$.
In the latter case, $S \cap I_\ell''$ is an affine progression of difference $-r'$, $I_\ell''$ is $r'$-isolated for $g$, and $m(g) \cap I_\ell'' \ne \emptyset$, for $\ell=1,\dots,\lfloor p/q \rfloor$.
Lastly, in this latter case, $m(g) \cap I \subset I_0 \cup I_1'' \cup \dots \cup I_{\lfloor p/q \rfloor}''$. \qed
\end{lem}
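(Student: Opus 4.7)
The plan is to mirror the proof of Lemma~\ref{lem:auxintervals1}, now with $L_1$, $I_1''$, and $I_1$ playing the roles of $H_0$, $I_0$, and $I_1'$ respectively. First I would observe that $S \cap L_1$ is an affine progression on $L_1 = [s+r,s+q+r)$ of difference $r$ whose break point is the overall break point $b$: returns to $L_1$ under $f_{I,-q}$ are governed by $f_{L_1,r}$ (Lemma~\ref{lem: first return}), and by hypothesis $b \in I_1 \subset L_1$ is the final iterate. The condition $b > s+q$ places $b$ past the ``width minus difference'' threshold $(s+r)+q-r=s+q$, so the Addendum to Lemma~\ref{lem: minima} applies. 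The hypotheses $\theta \nmid r$ and $m(g) \cap L_1 \supseteq m(g) \cap I_1 \ne \emptyset$ (from Lemma~\ref{lem:dividesqnegative1}) being met, Lemma~\ref{lem: minima} with the Addendum yields two alternatives: (a) $I_1''$ is left-like, $m(g) \cap L_1 \subseteq I_1''$, and $S \cap I_1''$ has difference $-r'$; or (b) $I_1$ is right-like and $m(g) \cap L_1 \subseteq I_1$.

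I would then propagate to $\ell = 2, \dots, \lfloor p/q \rfloor$ using Lemma~\ref{lem: break point location}(2), which gives $g(x) = g(x-q)$ for $b \le x \le s+p-1$. Since each $L_\ell$ with $\ell \ge 2$ lies in $(b, s+p-1]$, iterating $(\ell-1)$ times identifies $g|_{L_\ell}$ with $g|_{L_1}$ translated by $(\ell-1)q$; transport of $S$-membership under the same translation comes from Lemma~\ref{lem: pattern 2-2}(1). Therefore in case (a) each $I_\ell''$ inherits left-likeness and captures $m(g) \cap L_\ell$, while in case (b) each $I_\ell$ inherits right-likeness. In case (a) the remaining statements then follow: the affine structure on $S \cap I_\ell''$ was already recorded preceding the statement; $m(g) \cap I_\ell'' \ne \emptyset$ follows from $m(g) \cap I_\ell \ne \emptyset$ (Lemma~\ref{lem:dividesqnegative1}) combined with $m(g) \cap L_\ell \subseteq I_\ell''$; and the covering $m(g) \cap I \subseteq I_0 \cup I_1'' \cup \dots \cup I_{\lfloor p/q \rfloor}''$ follows from the partition $I = I_0 \cup L_1 \cup \dots \cup L_{\lfloor p/q \rfloor}$.

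The main obstacle I anticipate is $r'$-isolation of $I_1''$, whose left $r'$-neighborhood meets $I_0$ where minima are present. I would exploit Lemma~\ref{lem:Icontainsall}: in case (a) every $x_0 \in m(g) \cap I_0$ satisfies $x_0 + q \in m(g) \cap I_1 \subseteq I_1''$, so $x_0 + q \in I_1 \cap I_1'' = [s+q,s+2r)$ and hence $x_0 \in [s, s+2r-q)$. Since $q = dr + r' \ge r + r'$, this set lies at distance at least $r'$ from $I_1''$, excluding minima from the left $r'$-neighborhood. Isolation of $I_\ell''$ for $\ell \ge 2$ is routine from $m(g) \cap L_{\ell \pm 1} \subseteq I_{\ell \pm 1}''$ and $q - r \ge r'$, while the right boundary for $\ell = \lfloor p/q \rfloor$ is handled by the standing $q$-isolation of $I$ for $g$.
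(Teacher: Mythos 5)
Your proof is correct and is precisely the analogous argument that the paper has in mind when it states, just before Lemma~\ref{lem:auxintervals2}, that ``the proofs are completely analogous, and we omit them'': you mirror the proof of Lemma~\ref{lem:auxintervals1} with $L_1$, $I_1''$, $I_1$ in place of $H_0$, $I_0$, $I_1'$, applying the Addendum of Lemma~\ref{lem: minima} to $L_1$ and propagating via the $q$-periodicity of $g$ past $b$ from Lemma~\ref{lem: break point location}(2). Your handling of the $r'$-isolation (deducing $q = r + r'$ from $I_1 \cap I_1'' \neq \emptyset$ and using the $(p,q)$-symmetry from Lemma~\ref{lem:Icontainsall}) matches the corresponding step in the paper's proof of Lemma~\ref{lem:auxintervals1}.
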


The net result of our preparatory Lemmas in the case that the difference of $S$ is $-q$ reads thus:

\begin{lem}\label{lem: thetadividesdiffneg}
Let $S$ be an affine progression in $I=[s,s+p)$ with difference $-q$ where $q>0$. Let $r=[p]_q$, and let $r'=[q]_r$.
Suppose that $g$ is an $S$-jump function with parameters $t,\theta$ and that $\theta \nmid r$.
Assume $g$ has a $(p,q)$-symmetry in its minima, that $m(g) \cap I \neq \emptyset$, and $I$ is $q$-isolated for $g$.
Suppose that $I$ is left-like or right-like.
Then $m(g) \cap I$ is contained in the union of pairwise disjoint subintervals $J_0, \dots, J_{\lfloor p/q \rfloor}$.
For each $\ell = 0,\dots,\lfloor p/q \rfloor$, $S \cap J_\ell$ is an affine progression of width $r$ and difference $-r'$, $m(g) \cap J_\ell \ne \emptyset$, and $J_\ell$ is $r'$-isolated and either left-like or right-like for $g$. \qed
\end{lem}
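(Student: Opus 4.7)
The plan is a direct case analysis on the position of the break point $b$ of the affine progression $S \cap I$, leveraging the six preceding lemmas. The target subintervals $J_0, \dots, J_{\lfloor p/q \rfloor}$ will be drawn from among the families $\{I_\ell\}$, $\{I_\ell'\}$, and $\{I_\ell''\}$ already introduced; Lemmas \ref{lem:dividesqnegative1}, \ref{lem:auxintervals1}, and \ref{lem:auxintervals2} have already verified for each of these families the affine-progression structure with difference $-r'$, the non-emptiness of the intersection with $m(g)$, and the $r'$-isolation. So the only work remaining is to propagate the left-like or right-like condition from $I$ to the chosen $J_\ell$, together with the disjointness and containment statements.

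Lemma \ref{lem: break point location} restricts $b$ to satisfy $b > s+p-r$ or $b < s+q+r$, and I will split into three cases: $b \le s+q$; $b \in (s+q, s+q+r)$; and $b \ge s+q+r$, which by Lemma \ref{lem: break point location} forces $b > s+p-r$. In the first case, Lemma \ref{lem: b vs s + q} transfers left- or right-likeness from $I$ to every $I_\ell$, so I set $J_\ell = I_\ell$ and combine with Lemma \ref{lem:dividesqnegative1} and Lemma \ref{lem:Icontainsall} to conclude.

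In the second case, the lemma immediately preceding Lemma \ref{lem:auxintervals2} handles $I_0$, and Lemma \ref{lem:auxintervals2} (applicable because $\theta \nmid r$) presents two alternatives for the remaining indices: either every $I_\ell$ with $\ell \ge 1$ is right-like, in which case I set $J_\ell = I_\ell$ for all $\ell$; or every $I_\ell''$ is left-like, in which case I set $J_0 = I_0$ and $J_\ell = I_\ell''$ for $\ell \ge 1$. The third case is handled dually, with the roles of $I_0$ and $I_{\lfloor p/q \rfloor}$ swapped: the lemma preceding Lemma \ref{lem:auxintervals1} gives $I_{\lfloor p/q \rfloor}$ inheriting from $I$, and Lemma \ref{lem:auxintervals1} offers the two analogous alternatives, with $J_\ell = I_\ell$ throughout or $J_\ell = I_{\ell+1}'$ for $\ell < \lfloor p/q \rfloor$ combined with $J_{\lfloor p/q \rfloor} = I_{\lfloor p/q \rfloor}$.

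The substantive work is already contained in the preceding five lemmas, so no step should pose a serious obstacle. The only subtlety to track is pairwise disjointness in the mixed constructions of the latter two cases: for example, in the second alternative of the third case, one must verify that $I_{\lfloor p/q \rfloor}$ is disjoint from every $I_\ell'$ with $\ell \le \lfloor p/q \rfloor$. This is immediate from the explicit coordinates of these subintervals together with the identity $q = r + r'$ noted within the proof of Lemma \ref{lem:auxintervals1}, which forces $I_{\lfloor p/q \rfloor}'$ to end precisely where $I_{\lfloor p/q \rfloor}$ begins. The containment $m(g) \cap I \subset \bigcup_\ell J_\ell$ is then built into each of Lemmas \ref{lem:Icontainsall}, \ref{lem:auxintervals1}, and \ref{lem:auxintervals2}.
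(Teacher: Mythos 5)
Your proposal is correct and matches the intended argument: the paper places a \qed after the statement precisely because the lemma is meant to be read off as a corollary of the string of preceding results, and your case split on the break point $b$ (via Lemma \ref{lem: break point location}) followed by invocations of Lemma \ref{lem: b vs s + q}, the two unnumbered lemmas, Lemma \ref{lem:auxintervals1}, and Lemma \ref{lem:auxintervals2} is exactly the unfolding the authors have in mind. One minor remark: in the third case you invoke $q=r+r'$ to get disjointness of $I_{\lfloor p/q\rfloor}'$ and $I_{\lfloor p/q\rfloor}$, but this is not needed there—by their definitions, $I_{\lfloor p/q\rfloor}'=[s+\lfloor p/q\rfloor q - r,\, s+\lfloor p/q\rfloor q)$ ends exactly where $I_{\lfloor p/q\rfloor}=[s+\lfloor p/q\rfloor q,\, s+\lfloor p/q\rfloor q + r)$ begins, regardless of any relation between $q$, $r$, and $r'$; the identity $q=r+r'$ is only established inside the second alternative of Lemma \ref{lem:auxintervals1} as a consequence, not a hypothesis, so it is safest not to lean on it for disjointness.
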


\subsection{Harmonics and minima.}
\label{ss: harmonics}

In this subsection, we prove Theorem~\ref{thm: minimizer} using the notion of an affine progression and the results developed in the preceding subsections.
We refer to Subsections \ref{ss: main result} and \ref{ss: notation} of the Introduction for the salient notation and recall the statement of Theorem~\ref{thm: minimizer} here (where $r_0 = q$ and the empty product is taken to be $1$):

\thmminimizer*

\noindent
{\em Example.} Take $(p,q,k) = (33,10,11)$.
It corresponds to case (2) in the example following Lemma \ref{lem: minima}.
The pair $(33,10)$ has the sequence of remainders $(3,1)$ and sequence of coefficients $(3,3,3)$.
As indicated in that example, $|m(g) \cap I|=4$.
In this case, $\theta = 3$ divides $r_1$, so $\prod_{\{j : \theta | r_j\}} (d_{j+1}+1)=d_2+1=4$, in conformity with Theorem \ref{thm: minimizer}.
As another example, take $(p,q,k)=(46,17,23)$.
The pair $(46,17)$ has the sequence of remainders $(12,5,2,1)$ and sequence of coefficients $(2,1,2,2,2)$.
In this case, $\theta=2$ divides $r_1$ and $r_3$, leading to the product $(d_2+1)(d_4+1) = 6$, which one may check equals $|m(g) \cap I|$.

\begin{proof}
We wish to compute the number of times $S(p,q,k)$ achieves its minimum value of 0.
Thus, we wish to compute $|m(g) \cap I|$, where $I=[0,p)$.
We do so by honing in on the set $m(g) \cap I$ by an iterative procedure.
Define $\iota(\theta,p,q)= \{1 \leq j \leq n :  \theta \nmid r_{j-1}\}$.
Note that $1 \in \iota(\theta,p,q)$, by the convention that $r_0 = q$, and $n \in \iota(\theta,p,q)$, because $r_{n-1} =1$.
By convention, we take $r_n=0$ below, although we are never concerned with the divisibility of $r_n$ by $\theta$.

We recursively construct a sequence $\I_j$, $j \in \iota(\theta,p,q)$. 
Each $\I_j$  is a collection of disjoint subintervals of $I$, and it possesses the following properties:

\begin{enumerate}
\item
$\I_1$ consists of a single interval of width $q$.
In general, each $J \in \I_j$ has width $r_{j-1}$.
In particular, each $J \in \I_n$ consists of a single element.
\item
$(m(g) \cap I) \subset \bigcup_{J \in \I_j} J$; and, for each $J \in \I_j$, $m(g) \cap J  \neq \emptyset$. 
\item
For each $J \in \I_j$, $S \cap J$ is an $r_j$-isolated affine progression with width $r_{j-1}$ and difference $\pm r_j$; furthermore, if the difference is negative, then $J$ is left-like or right-like.
\item
If $j < n$ and $\theta \nmid r_j$, then $j+1 \in \iota(\theta,p,q)$ and $|\I_{j+1}| = |\I_j|$.
\item
If $j < n$ and $\theta \mid r_j$, then $j+1 \notin \iota(\theta,p,q)$, $j+2 \in \iota(\theta,p,q)$, and $|\I_{j+2}| = \lceil r_{j-1}/ r_j \rceil \cdot |\I_j|$.
\end{enumerate}

Assuming the existence of the $\I_j$, we derive the conclusion of the theorem.
Properties (1) and (2) show that $|m(g) \cap I|=|\I_n|$.
We calculate $|\I_n|$ as the telescoping product of $|\I_1|$ with the quotients $|\I_k| / |\I_j|$, as $(j,k)$ runs over pairs of consecutive indices in $\iota(\theta,p,q)$ with $j < k$.
First, we have $|\I_1| = 1$, by (1).
Next, let $j < k$ be consecutive indices in $\iota(\theta,p,q)$.
If $\theta \nmid r_j$, then $k=j+1$, while if $\theta \mid r_j$, then $k = j+2$, by (4) and (5).
The quotient $|\I_k| / |\I_j|$ accordingly equals 1 or $\lceil r_{j-1}/ r_j \rceil$, again by (4) and (5).
Thus, we get the desired expression $|m(g) \cap I| =|\I_n| = \prod_{ 0 < j < n, \, \theta | r_j} \lceil r_{j-1}/ r_j \rceil$.
Lastly,  $r_{j-1}/ r_j = d_{j+1} + (r_{j+1}/r_j)$ from the Euclidean algorithm.
Note that if $\theta \mid r_j$, $j < n$, then in fact $j < n-1$.
Hence for $0 < j < n$, $\theta \mid r_j$, we have $0 < r_{j+1}/r_j < 1$, and so $\lceil r_{j-1}/ r_j \rceil = d_{j+1}+1$, completing the identity.

To begin the construction of the $\I_j$, note that since $p$ and $q$ are relatively prime, $\theta \nmid q$.
Applying Lemma \ref{lem: minima} to $S$ and $I=[0,p)$ yields a subinterval $J \subset I$ such that $m(g) \cap I = m(g) \cap J$.
Moreover, $J \cap S$ is an affine progression of width $q$ and difference $\pm r_1$, and if the difference is $-r_1$, then $J$ is left-like or right-like.
Since $m(g) \cap I \subset J$ and $g$ is $p$-periodic, we obtain $m(g) \subset J + p \cdot \bZ$, and the translates of $J$ by $p \cdot \bZ$ are spaced at least $[p]_q=r_1$ units apart.
Hence $J$ is $r_1$-isolated.
Setting $\I_1=\{J\}$, we obtain properties (1)-(3) for $j=1$.

Suppose by induction that  $j \in \iota(\theta,p,q)$, $j < n$, and for each $k \in \iota(\theta,p,q)$, $k \le j$, the collection $\I_k$ is constructed with properties (1)-(5) for $k < j$ and (1)-(3) for $k=j$.

First, assume that $\theta \nmid r_j$.
Then $j+1 \in \iota (\theta,p,q)$, by definition.
Let $J \in \I_j$.
By (3), $J \cap S$ is an $r_j$-isolated affine progression of width $r_{j-1}$ and difference $\pm r_j$.
If the difference is $r_j$, then we apply Lemma \ref{lem: minima} to $J$, with $p=r_{j-1}, q=r_j, r=r_{j+1}$ in its statement.\footnote{
As indicated at the outset of Section \ref{s: minima}, we indulge in the following abuse of notation.
In Theorem~\ref{thm: minimizer}, $p,q$ stand for the parameters of the lens space.
In the calls to Lemmas~\ref{lem: minima}, \ref{lem: minima 2-2}, \ref{lem:dividesqpositive}, and \ref{lem: thetadividesdiffneg}, $p,q$ stand for the local variables.}
We find a subinterval $J' \subset J$ such that $J' \cap m(g) = J \cap m(g)$ and such that $J' \cap S$ is an $r_{j+1}$-isolated affine progression of width $r_j$ and difference $\pm r_{j+1}$.
Furthermore, if the difference of $J'$ is $-r_{j+1}$, then $J'$ is left-like or right-like.
If instead $J$ has difference $-r_j$, then by assumption it is left-like or right-like.
We apply Lemma~\ref{lem: minima 2-2} to $J$, with $p=r_{j-1}, q=r_j, r=r_{j+1}$.
We find a subinterval $J' \subset J$ such that $m(g) \cap J = m(g) \cap J'$ and such that $J' \cap S$ is an $r_{j+1}$-isolated affine progression of width $r_j$ and difference $r_{j+1}$.
In either case, we associate $J'$ to $J \in \I_j$.
Set $\I_{j+1}=\{J' \, | \, J \in \I_j\}$.
Then (1)-(3) hold for $\I_{j+1}$ and (4) holds for $\I_j$.  

Second, assume that $\theta \mid r_j$.
Theorem~\ref{thm:lotsofzerosg} shows that $g$ has an $(r_{j-1},r_j)$-symmetry on its minima, a hypothesis in the Lemmas we shall invoke.
Note that $j+1 \notin \iota(\theta,p,q)$, by definition.
Also, $\theta \nmid r_{j+1}$, since $\theta$ does not divide consecutive remainders.
Hence $j+2 \in \iota(\theta,p,q)$.
Let $J \in \I_j$.
Then $J \cap S$ is an $r_j$-isolated 
affine progression of width $r_{j-1}$ and difference $\pm r_j$.
If the difference of $J$ is $r_j$, then we apply Lemma~\ref{lem:dividesqpositive} to $J$, with $p=r_{j-1}, q=r_j, r=r_{j+1}, r'=r_{j+2}$.
We find disjoint subintervals $J_0', \dots, J_{\lfloor r_{j-1}/r_j \rfloor}'$ of $J$ with the properties that
\begin{itemize}
\item $J_i' \cap S$ is an $r_{j+2}$-isolated affine progression of width $r_{j+1}$ and difference $r_{j+2}$ for each $i$,
\item $m(g) \cap J_i' \neq \emptyset$ for each $i$, and
\item $m(g) \cap J \subset J_0' \cup J_1' \cup \dots \cup J_{\lfloor r_{j-1}/r_j \rfloor}'$. 
\end{itemize} 
If the difference of $J$ is $-r_j$, then $J$ is right-like or left-like.
We apply Lemma~\ref{lem: thetadividesdiffneg} to $J$, with $p=r_{j-1}, q=r_j, r=r_{j+1}, r'=r_{j+2}$.
We find disjoint subintervals $J_0', \dots, J_{\lfloor r_{j-1}/r_j \rfloor}'$ of $J$ with the properties that
\begin{itemize}
\item $J_i' \cap S$ is an $r_{j+2}$-isolated affine progression of width $r_{j+1}$ and difference $-r_{j+2}$ for each $i$,
\item $J_i'$ is left-like or right-like for each $i$,
\item $m(g) \cap J_i' \neq \emptyset$ for each $i$, and
\item $m(g) \cap J \subset J_0' \cup J_1' \cup \dots \cup J_{\lfloor r_{j-1}/r_j \rfloor}'$. 
\end{itemize}
In either case, define $\I_{j+2}$ to be the set of the $\lfloor r_{j-1}/r_j \rfloor +1=\lceil r_{j-1}/r_j \rceil$
disjoint subintervals associated to $J$ above, unioned over all $J \in I_j$.
Then (1)-(3) hold for $\I_{j+2}$ and (5) holds for $\I_j$.

This completes the inductive construction of the sequence $\I_j$, $j \in \iota(\theta,p,q)$, and verifies properties (1)-(5) for them.
The proof of the Theorem is now complete.
\end{proof}

\section{Criteria for fibering.}
\label{s: fibering}

In this Section, we prove Proposition~\ref{prop: coeffs 2} and Theorem~\ref{thm:fibersiffS}, thereby establishing Theorem \ref{thm: characterization}. We also prove Theorem \ref{thm: seifert} and Corollary~\ref{c: order bound}.
The material of this section is classical in nature.

Let $K=K(p,q,k)$ denote a simple knot in $L(p,q)$ and $X$ its exterior.
Since $L(p,q)$ is a rational homology sphere, the group $H^1(X;\bZ)$ is infinite cyclic, with a generator $\phi$ that is Poincar\'e dual to a (rational) Seifert surface for $K(p,q,k)$.
Let $\Delta(p,q,k) = \Delta_\phi \in \bZ[T,T^{-1}]$ denote its Alexander polynomial.
It is well-defined up to multiplication by a unit, and we use the symbol $\sim$ to denote equality in $\bZ[T,T^{-1}]$ up to multiplication by a unit.
Recall from Section~\ref{ss: notation} that $S(p,q,k)$ is the sequence of values output by the jump function $g : \{0,\dots,p-1\} \to \bZ$ of \eqref{eq: jump function}, and  $\theta$ denotes the order of $k \pmod p$.

We have the following refinement of Proposition \ref{prop: coeffs}:

\begin{prop}
\label{prop: coeffs 2}
The generating function of the sequence $S(p,q,k)$ equals the Alexander polynomial $\Delta(p,q,k)$, multiplied by a monic polynomial:
\[
\sum_{i=0}^{p-1} T^{g(i)} \, \sim \, \frac {T^\theta-1}{T-1} \cdot \Delta(p,q,k).
\]
In particular, the leading coefficient of $\Delta(p,q,k)$ equals the number of times the sequence $S(p,q,k)$ attains its minimum value.
\end{prop}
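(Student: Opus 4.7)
The plan is to realize $\Delta(p,q,k)$ as the order of the Alexander module $H_1(\widetilde X;\bZ)$ of the infinite cyclic cover $\widetilde X \to X$ determined by $\phi$, and to compute it directly from the CW structure on $X$ inherited from the spine of $L(p,q)$ developed in Section~\ref{s: branched surfaces}. The 2-chain $C$ of Equation~(\ref{eq: 2-chain}), whose coefficients are $c_i = g(i)$ on the regions $R_i$ and $t'$, $\theta - t'$, $\theta - t$, $t$ on the meridian half-disks $D_\alpha^1, D_\alpha^2, D_\beta^1, D_\beta^2$, records exactly the heights at which these 2-cells sit in $\widetilde X$. Lifting each 2-cell to the appropriate translate in $\widetilde X$ produces a chain complex of free $\bZ[T,T^{-1}]$-modules whose boundary maps are twisted by monomials in $T$ determined by $g$, $t$, and $t'$.

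A direct Fox-calculus-style computation on this twisted complex expresses $\Delta(p,q,k)$, up to a unit in $\bZ[T,T^{-1}]$, as
\[
\Delta(p,q,k) \;\sim\; \frac{T-1}{T^\theta-1}\,\sum_{i=0}^{p-1} T^{g(i)}.
\]
The cyclotomic factor $(T^\theta-1)/(T-1) = 1 + T + \cdots + T^{\theta-1}$ appears because $[K]$ has order $\theta$ in $H_1(L(p,q);\bZ)$: the 2-chain $C$ bounds $\theta\cdot K$ rather than $K$, so when we divide out by the relation coming from $C$ we introduce a redundant cyclotomic-type factor corresponding to the torsion $\bZ/\theta$ summand of $H_1(X;\bZ)$. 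Clearing denominators yields the displayed identity of the Proposition.

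The main obstacle will be the precise determinant calculation, namely verifying that after accounting for this redundancy the top elementary ideal is generated by $\sum_i T^{g(i)}$. The essential point is that each 1-cell of the spine is crossed by a lift of $F$ with multiplicity prescribed by the jump relation~(\ref{eq: jump function}) (a jump of $t$ outside $S$ and of $t-\theta$ at points of $S$), which is exactly what forces the boundary matrix entries to combine, up to units, as the monomials $T^{g(i)}$ appearing in the claim.

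For the second assertion, the symmetry $\Delta(p,q,k)(T) \sim \Delta(p,q,k)(T^{-1})$ (standard for knots in rational homology spheres, via Poincar\'e duality on $\widetilde X$) shows that the leading coefficient of $\Delta(p,q,k)$ equals its constant coefficient. By the identity above, that constant coefficient equals the constant coefficient of $\sum_i T^{g(i)}/(1 + T + \cdots + T^{\theta-1})$, which, since $\min(g) = 0$ and the divisor has constant term $1$, is precisely the number of indices $i \in \{0,\ldots,p-1\}$ with $g(i) = 0$, i.e., the number of minima of $S(p,q,k)$.
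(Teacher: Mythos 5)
Your overall strategy—compute the Alexander polynomial from a combinatorial model of $X_K$ built out of the spine, and use the coefficients of the 2-chain $C$ to track heights in the infinite cyclic cover—is in the same spirit as the paper's proof, but the key computation is left undone and the proposal as written has a genuine gap. You yourself flag it: ``the main obstacle will be the precise determinant calculation.'' That determinant calculation \emph{is} the proof. You assert that a ``direct Fox-calculus-style computation on this twisted complex'' produces the displayed formula, but the spine CW structure on $X_K$ has $p$ regions $R_i$ plus four half-disks, so the chain complex you would have to analyze has a large presentation matrix, and you have not exhibited why its first elementary ideal is generated by $\sum_i T^{g(i)}/(1 + T + \cdots + T^{\theta-1})$. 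The paper avoids this size problem entirely: it passes to a doubly-pointed Heegaard diagram and thereby to a 2-generator, 1-relator presentation $\langle x,y \mid r\rangle$ with $r = \prod_{i=0}^{p-1} x y^{e_i}$. The single Fox derivative $\partial_x r = \sum_{j=0}^{p-1}\prod_{i=0}^{j-1}xy^{e_i}$ maps under $\phi$ (with $\phi(x)=t$, $\phi(y)=-\theta$) directly to the generating function of $S(p,q,k)$, because the partial products $\phi(\prod_{i\le j}xy^{e_i})$ satisfy precisely the jump recursion (\ref{eq: jump function}). The cyclotomic factor then comes from the standard identity $\phi(\partial_x r) \sim \Delta \cdot (T^{\phi(y)}-1)/(T-1)$ for such a presentation, cited to Rasmussen.

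Two further points. First, your heuristic explanation of where the cyclotomic factor comes from (``a redundant cyclotomic-type factor corresponding to the torsion $\bZ/\theta$ summand of $H_1(X;\bZ)$'') is not accurate as stated: the torsion of $H_1(X_K;\bZ)$ is $\bZ/\gcd(p,k)$, not $\bZ/\theta$, and in any case the factor $(T^\theta-1)/(T-1)$ arises from $\phi(y)=-\theta$ in the Fox-calculus formula, not from a torsion summand of homology. If you pursue your chain-complex route you would need a correct and precise account of this. Second, your deduction of the second assertion from the first via the symmetry $\Delta(T) \sim \Delta(T^{-1})$ is sound and matches what the paper leaves implicit (the paper simply says the second assertion ``follows easily''); that part of your write-up is fine. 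But without carrying out the central determinant computation, the first assertion—and hence the whole Proposition—is unproved in your draft.
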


\noindent
{\em Example.}
For the knot $K(5,2,1)$, a glance at Figure \ref{fig: domain} shows that the left side equals $T^4+T^3+T^2+T+1 = (T^5-1)/(T-1)$.
Consequently, $\Delta(5,2,1) \sim 1$.
Indeed, as $K(5,2,1)$ is isotopic to the core of $V_\beta$, its complement is a solid torus, which matches with this calculation.

\begin{figure}
\includegraphics[width=2in]{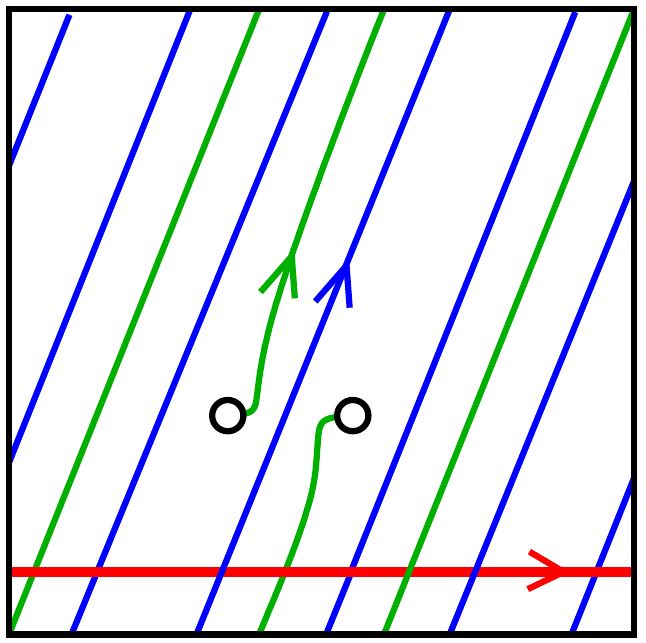}
\put(0,135){$\beta_1$}
\put(0,100){$\beta$}
\put(0,12){$\alpha$}
\caption{Heegaard diagram of the exterior $K(5,2,1)$.}
\label{fig: doubly pointed}
\end{figure}

\begin{proof}
We calculate $\Delta(p,q,k)$ from the presentation of $K(p,q,k)$ by a doubly-pointed Heegaard diagram, as in \cite[Section 5]{os:lens} and \cite[Section 3.7]{r:Lspace}.

First, we follow a familiar procedure to obtain a Heegaard decomposition of $X$ from the presentation of $K$ by a doubly-pointed Heegaard diagram.
Push $K$ off of $D_\alpha$ and $D_\beta$ so that it meets $T$ in a pair of points, up and to the right of $x_0$ and $x_k$ (i.e. in the regions $R_{ [q]_p}$ and $R_{ [k+q]_p}$).
By abuse of notation, let $D^1_\beta$ and $\beta_1$ denote shifted copies of these objects with respect to the new positioning of the knot.
The exterior of $K$ in $V_\beta$ is a genus-2 handlebody with compressing disks $D_\beta$, $D^1_\beta$.
The handlebody has a decomposition into a 0-handle and a pair of 1-handles whose cocores are these two disks.
We obtain $X$ by attaching a 2-handle with core $D_\alpha$.
See Figure \ref{fig: doubly pointed}.

Next, the Heegaard decomposition of $X$ leads, by an equally familiar procedure, to a presentation for its fundamental group $\pi$.
We record generators $x$ and $y$ for $\beta$ and $\beta_1$, respectively, corresponding to the cores of the 1-handles.
We record a single relation $r$ for the 2-handle $\alpha$, as follows.
Traverse $\alpha$ in the direction of its orientation, starting from a point just after the intersection point $x_0$ with $\beta$.
Record the letter $x^{\pm1}$ for each intersection point with $\beta$ of sign $\pm$, and record the letter $y^{\pm1}$ for each intersection point with $\beta_1$ of sign $\pm$.
The resulting word is the relation $r$, and we have $\pi \approx \langle x,y \, | \, r \rangle$.
From the construction, the relation takes the precise form
\[
r = \prod_{i=0}^{p-1} x y^{e_i},
\]
where $e_i = 1$ if $i \in S = \{ [q]_p, [2q]_p, \dots, [lq]_p\}$ and $e_i = 0$ if $i \notin S$.

To obtain the Alexander polynomial from this presentation, we first calculate the Fox free derivative
\[
\del_x r = \sum_{j=0}^{p-1} \prod_{i=0}^{j-1} x y^{e_i} \in \bZ[ \langle x,y \rangle ],
\]
understanding 1 for the empty product corresponding to $j=0$.
The generator $\phi \in H^1(X;\bZ) \approx \Hom(\pi, \bZ)$ is determined uniquely by the conditions that $\phi(r) = 0$, $\phi(x) > 0$, and $\phi(x)$ and $\phi(y)$ are coprime.
From $\phi(r)=0$, we obtain $p \phi(x) + l \phi(y) = 0$.
We solve this equation subject to the other two conditions to obtain $\phi(x) = l / \gcd(p,l) = t$ and $\phi(y) = - p / \gcd(p,k) = -\theta$.
Thus, for $j=0,\dots,p-1$, we have
\[
\phi \left( \prod_{i=0}^j x y^{e_i} \right) - \phi \left( \prod_{i=0}^{j-1} x y^{e_i} \right) = \phi(x y^{e_j}) = \begin{cases} 
t-\theta, & j \in S \\ t, & j \notin S. \end{cases}
\]
That is, there is a constant $C$ such that $\phi \left( \prod_{i=0}^j x y^{e_i} \right)+C, \,  j=0, \dots p-1,$ are the values of the jump function $g$ of Equation \eqref{eq: jump function} of Section~\ref{ss: notation} corresponding to the sequence $S(p,q,k)$.

The mapping $\phi$ extends to a map of group rings $\phi : \bZ[\langle x,y \rangle] \to \bZ[T,T^{-1}]$.
From the preceding formula, we see that $\phi(\del_x r) \in \bZ[T,T^{-1}]$ agrees, up to multiplication by a unit, with the generating function of the sequence $S(p,q,k)$.
On the other hand, $\phi(\del_x r) \sim \Delta(p,q,k) \cdot (T^{\phi(y)}-1)/(T-1)$ (cf. \cite[Proof of Proposition 3.1]{r:Lspace}; there it is assumed that $\theta=p$, but the argument goes through using the fact that $\phi$ is surjective).
Substituting $\phi(y)=-\theta$ and multiplying by the unit $-T^\theta$ yields the first assertion of the Proposition.
The second assertion of the Proposition follows easily from the first.
\end{proof}

The following result establishes the equivalence of the first two items in Theorem~\ref{thm: characterization}:

\begin{thm}
\label{thm:fibersiffS}
$K(p,q,k)$ fibers if and only if $S(p,q,k)$ achieves its minimum exactly once.
\end{thm}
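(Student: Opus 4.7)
The plan is to combine Stallings's fibration theorem with Brown's characterization of the BNS invariant for one-relator groups. The knot exterior $X$ is compact, orientable, and irreducible with toral boundary (or a solid torus in the trivial core case, which fibers and also has unique minimum), and $\phi \in H^1(X;\bZ)$ is a generator dual to a rational Seifert surface for $K$. Stallings's theorem \cite{stallings1962} asserts that $K$ fibers if and only if $\ker(\phi \colon \pi_1(X) \to \bZ)$ is finitely generated.

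I would then use the two-generator, one-relator presentation $\pi_1(X) = \langle x, y \mid r \rangle$ with $r = \prod_{i=0}^{p-1} x y^{e_i}$ extracted in the proof of Proposition~\ref{prop: coeffs 2}, where $e_i = 1$ if $i \in S$ and $e_i = 0$ otherwise.  Brown's criterion \cite{brown1987} for such groups identifies the BNS invariant combinatorially: $[\phi]$ lies in $\Sigma(\pi_1(X))$ iff the sequence of partial $\phi$-sums $\phi(y_1 \cdots y_j)$ along the letters of $r$ achieves its minimum exactly once. A finitely generated kernel corresponds to both $[\phi]$ and $[-\phi]$ lying in $\Sigma$, i.e.\ both the minimum and the maximum of this partial-sum sequence being attained uniquely. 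By the calculation in the proof of Proposition~\ref{prop: coeffs 2}, the partial sums agree with $g(0), g(1), \ldots, g(p-1)$ up to an additive constant, so uniqueness of min/max for the partial sums is the same as uniqueness of min/max for $S(p,q,k)$.

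The final step is to reconcile Brown's two-sided condition with the one-sided condition in the theorem by showing that $g$ attains its minimum uniquely iff it attains its maximum uniquely. For this I would invoke Proposition~\ref{prop: coeffs 2}, which gives $\sum_i T^{g(i)} \sim \tfrac{T^\theta-1}{T-1} \cdot \Delta(p,q,k)$. The cyclotomic factor is palindromic, and the Alexander polynomial of a knot in a rational homology sphere is palindromic up to units, so the left-hand generating function is itself palindromic. Matching top and bottom coefficients yields $|\{i : g(i)=\max g\}| = |\{i : g(i)=\min g\}|$, as required.

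The principal obstacle is the mismatch between the two-sided Brown/BNS condition and the one-sided statement about $S(p,q,k)$; the palindromic identification above closes the gap, and one should just verify that the unit ambiguity in Proposition~\ref{prop: coeffs 2} is an overall power of $T$ and sign, which is compatible with palindromicity. A secondary care is to confirm Stallings's hypotheses: $X$ is irreducible when $K$ is not a core (in which case $K$ fibers trivially and both $S$ entries with value $0$ collapse to a single region, so the criterion also holds).
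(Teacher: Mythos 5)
Your proposal is correct and follows the same basic route as the paper: the two-generator one-relator presentation $\langle x,y \mid r \rangle$ extracted from the doubly-pointed Heegaard diagram, Brown's combinatorial criterion for the BNS invariant applied to the partial $\phi$-sums along $r$, and Stallings's fibration theorem. The one place you diverge is in handling the two-sidedness of the kernel-finitely-generated condition. You correctly observe that $\ker\phi$ being finitely generated is equivalent to both $[\phi]$ and $[-\phi]$ lying in $\Sigma(\pi_1 X)$, which a priori asks for uniqueness of \emph{both} the minimum and the maximum of the partial-sum sequence, and you close the gap by noting that the generating function $\sum_i T^{g(i)}$ is palindromic (a consequence of Proposition~\ref{prop: coeffs 2} together with the symmetry $\Delta(T^{-1})\sim\Delta(T)$, and the fact that a nonzero polynomial with non-negative coefficients that is palindromic up to a unit must satisfy $a_j = a_{M-j}$ exactly), so ``min attained once'' and ``max attained once'' are equivalent for $S(p,q,k)$. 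The paper instead cites Brown's Theorem 4.3 as giving directly that $\ker(-\phi)$ is finitely generated iff the walk attains a unique maximum; your palindromicity argument makes the two-sided bookkeeping explicit and is a worthwhile addition, though it lands at the same place. Two minor points of care: Brown's criterion, stated precisely, involves the cyclic walk and has edge cases about where the extremum sits relative to the basepoint, so you should confirm these don't interfere here (they don't, because the relator is a positive word in $x$ and $y'=xy$ with $\phi(x)=t>0$ and $\phi(y')=t-\theta<0$); and for Stallings one should check irreducibility of the exterior, which holds since $\theta = p/\gcd(p,k) \geq 2$ for $0<k<p$ so $K$ is never null-homologous, and you do flag this.
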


\begin{proof}
Let $\phi$ be the generator of $H^1(X;\bZ)$ taken in the proof of Theorem~\ref{prop: coeffs 2}.
As in that proof, $\pi_1(X)$ can be presented as $\langle x,y \, | \, r \rangle$ where
$\phi(x) = t$, $\phi(y) = -\theta$, and
\[
r = \prod_{i=0}^{p-1} x y^{e_i},
\]
with $e_i = 1$ if $i \in S = \{ [q]_p, [2q]_p, \dots, [lq]_p\}$ and $e_i = 0$ if $i \notin S$.

Letting $y'=xy$, we rewrite the presentation $\langle x,y' \, | \, r' \rangle$ where
\[
r' = \prod_{i=0}^{p-1} x_i,
\]
with $x_i = y'$ if $i \in S = \{ [q]_p, [2q]_p, \dots, [lq]_p\}$ and $x_i = x$ if $i \notin S$.

Define $s_i=x_0 x_1 \dots x_i$ for $i=0, \dots, p-1$. The proof of Theorem~\ref{prop: coeffs 2} shows there is a constant $C$ such that $\phi(s_i)+C=g(i)$, where $g$ is the jump function from (2) of Section~\ref{ss: notation}. The sequence $\phi(s_i),  i=0, \dots ,p-1,$ assumes a unique minimum iff $S(p,q,k)$ assumes a unique minimum. By  \cite[Theorem 4.3]{brown1987}, the kernel of $-\phi$ is finitely generated iff $-\phi(s_i), i=0, \dots ,p-1,$ assumes a unique maximum -- which is then equivalent to $S(p,q,k)$ attaining a unique minimum. By \cite[Theorems 1\&2]{stallings1962}, $K$ is fibered iff $S(p,q,k)$ attains a unique minimum.
\end{proof}

Recall Theorem \ref{thm: seifert}:
\thmseifert*

\begin{proof}
For a knot $K$ in a rational homology sphere and a (rational) Seifert surface $S$ for $K$, the Alexander polynomial satisfies the bound
\begin{equation}
\label{e: alex bound}
\mathrm{breadth} \, \Delta(K) \le 1 - \chi(S).
\end{equation}
(For example, see \cite[Theorem 1.1]{mcmullen2002}.)  For a simple knot $K$, Proposition \ref{prop: coeffs 2} gives
\begin{equation}
\label{e: breadth}
\mathrm{breadth} \, \Delta(K) = 1 - \theta +  M,
\end{equation}
where $M = \max_i c_i$ is the maximum coefficient in the domain associated with $K$ of Section \ref{ss: domains}.
We obtain a 2-chain $C'$ in $L(p,q)$
by replacing each coefficient $c_i$ in the 2-chain $C$ with $M-c_i$.
Just as $C$ yields a Seifert surface $S$ for $K(p,q,k)$, the 2-chain $C'$ yields a Seifert surface $S'$ for $-K(p,q,k)$.
Applying Lemma \ref{lem:euler} to both $S$ and $S'$ and summing the results, we obtain
\begin{equation}
\label{e: chi sum}
\chi(S)+ \chi(S') = \theta - \frac14 \sum |R_i \cap \partial X_K| \cdot c_i + \theta - \frac14 \sum |R_i \cap \partial X_K| \cdot (M-c_i) = 2\theta - 2M.
\end{equation}
Since $\Delta(K)$ is insensitive to orientation reversal, \eqref{e: breadth} and \eqref{e: chi sum} yield
\[
2 \cdot \mathrm{breadth} \, \Delta(K) = (1 - \chi(S)) + (1 - \chi(S')).
\]
Combined with \eqref{e: alex bound}, we learn that $\mathrm{breadth} \, \Delta(K) = 1 - \chi(S) = 1 - \chi(S')$ and that both of $S$ and $S'$ are taut Seifert surfaces for $\pm K(p,q,k)$.
\end{proof}

We finish with the following corollary of Theorem \ref{thm: fiber}:

\orderbound*

\begin{proof}
Suppose that $K \subset L(p,q)$ does not fiber and has order $\theta$.
By Theorem \ref{thm: fiber}, there exist positive integers $b$ and $c$ and an index $i$ such that $p = \theta b$ and $r_i= \theta c$.
Substituting into Corollary \ref{cor: identities} (3) and (4) gives
\[
(-1)^i \theta c =  p_i q- (\theta b) q_i \quad \textup{and} \quad \theta b = (\theta c) p_{i-1} + p_i r_{i-1}.
\]
From the first equation and the fact that $\gcd(\theta b,q)=1$, we deduce $\theta \, | \, p_i$.
From the second equation and this deduction, we obtain $b = c p_{i-1} + (p_i / \theta) r_{i-1} \ge 1+ r_{i-1} \ge 2 + r_i = 2 + \theta c \ge 2+ \theta$.
Thus, $p = \theta b \ge \theta(\theta+2)$, which establishes the contrapositive.
\end{proof}

\bibliographystyle{amsalpha2}

\bibliography{References}

\end{document}